\documentclass[reqno,a4paper]{amsart}

\usepackage[italian,english]{babel}
\usepackage{eucal,amsfonts,amssymb,amsmath,amsthm,epsfig,mathrsfs}
\usepackage{color}
\usepackage[dvipsnames]{xcolor}

\usepackage{amscd,amsxtra}
\usepackage{enumerate}
\usepackage{latexsym}
\usepackage{bm}

\allowdisplaybreaks

\newcounter{ipotesi}

\Alph{ipotesi}
 \makeatletter \@addtoreset{equation}{section}

\makeatother \makeatletter

\newtheorem{thm}{Theorem}[section]
\newtheorem{hyp}[thm]{Hypotheses}{\rm}
{\rm}

\newtheorem{prop}[thm]{Proposition}
\newtheorem{defi}[thm]{Definition}
\newtheorem{rmk}[thm]{Remark}{\rm}
\newtheorem{example}[thm]{Example}

\newcounter{parentenv}

\newcommand{\R}{{\mathbb R}}

\newcommand{\N}{{\mathbb N}}

\newcommand{\Rd}{\mathbb R^d}

\newcommand{\Rm}{\mathbb R^m}

\newcommand{\supp}{{\rm{supp}}\,}
\newcommand{\T}{{\bf T}}
\newcommand{\Res}{{\bf R}}

\newcommand{\e}{{\bf e}}
\newcommand{\g}{{\bf g}}

\newcommand{\f}{{\bf f}}
\newcommand{\uu}{{\bf u}}
\newcommand{\vv}{{\bf v}}
\newcommand{\A}{\boldsymbol{\mathcal A}}

\allowdisplaybreaks

\begin{document}

\title[Invariant measures for systems of Kolmogorov equations]{Invariant measures for systems of Kolmogorov equations}
\thanks{Work partially supported by the INdAM-GNAMPA Project 2017 ``Equazioni e sistemi di equazioni di Kolmogorov in dimensione finita e non''.}
\author[D. Addona, L. Angiuli, L. Lorenzi]{Davide Addona, Luciana Angiuli and Luca Lorenzi}
\address{D.A.:Dipartimento di Matematica e Informatica, Universit\`a degli Studi di Ferrara, via N.  Machia\-velli 30, I-44121 Ferrara, Italy}
\address{L.A.: Dipartimento di Matematica e Fisica ``Ennio De Giorgi'', Universit\`a del Salento, Via per Arnesano, I-73100 LECCE (Italy)}
\address{L.L.: Dipartimento di Scienze Matematiche, Fisiche e Informatiche, Plesso di Matema\-ti\-ca e Informatica, Universit\`a degli Studi di Parma, Viale Parco Area delle Scienze 53/A, I-43124 Parma, Italy}
\email{d.addona@campus.unimib.it}
\email{luciana.angiuli@unisalento.it}
\email{luca.lorenzi@unipr.it}

\keywords{Systems of Kolmogorov equations, Unbounded coefficients, Gradient estimates, Systems of invariant measures, Asymptotic behaviour.}
\subjclass[2000]{35B40, 35B41, 35K45, 37L40, 47A35.}

\begin{abstract}
In this paper we provide sufficient conditions which guarantee the existence of a system of invariant measures
for semigroups associated to systems of parabolic differential equations with unbounded coefficients. We prove that these measures are absolutely continuous with respect to the Lebesgue measure and  study some of their main properties. Finally, we show that they characterize the asymptotic behaviour of the semigroup at infinity.
\end{abstract}

\maketitle

\section{Introduction}
In this paper we prove the existence of systems $\{\mu_i: i=1,\ldots,m\}$ of finite signed Borel measures which
satisfy the equality
\begin{align}
\label{freddo}
\sum_{i=1}^m\int_{\Rd}(\T(t)\f)_id\mu_i=\sum_{i=1}^m\int_{\Rd}f_id\mu_i, \qquad\;\, i=1,\ldots,m,
\end{align}
for any $t\geq0$ and $\f\in B_b(\Rd;\Rm)$ (the space of vector-valued bounded Borel functions $\f:\Rd\to\Rm$) or, equivalently, for any
$\f\in C_b(\Rd;\Rm)$ (the subspace of $B_b(\Rd;\Rm)$ consisting of continuous functions). Here, $\{\T(t)\}_{t\geq0}$ (from now on simply denoted by $\T(t)$) is the semigroup of bounded linear operators on $B_b(\Rd;\Rm)$, associated to the vector-valued differential operator ${\A}$, defined on smooth functions
${\bm\zeta}=(\zeta_1,\ldots,\zeta_m)$ by
\begin{align}
(\A\boldsymbol{\zeta})_j(x)
:= & \sum_{h,k=1}^dq_{hk}(x)\frac{\partial^2\zeta_j}{\partial x_hx_k}(x)+\sum_{k=1}^db_k(x)\frac{\partial\zeta_j}{\partial x_k}(x)+\sum_{k=1}^d\sum_{i=1}^m(B_k(x))_{ji}\frac{\partial\zeta_i}{\partial x_k}(x)
\label{finale}
\end{align}
for any $x\in\Rd$ and $j=1,\ldots,m$, under suitable assumptions on its coefficients. Formula \eqref{freddo} seems the natural vector-valued counterpart of the
invariant measure of the scalar case. A probability measure is called invariant for a Markov semigroup $\{T(t)\}_{t\ge 0}$ (from now on simply denoted by $T(t)$) associated in $B_b(\Rd)$ to
an elliptic operator (with unbounded coefficients) ${\mathcal A}$ if
\begin{eqnarray*}
\int_{\Rd}T(t)fd\mu=\int_{\Rd}fd\mu,\qquad\;\,t>0,\;\,f\in B_b(\Rd).
\end{eqnarray*}
For this reason, we call {\it system of invariant measures} for $\T(t)$ any family $\{\mu_i: i=1,\ldots,m\}$ of finite measures which satisfy \eqref{freddo}.

In the scalar case, under quite mild (algebraic) conditions on the coefficients of the operator ${\mathcal A}$, there exists a unique invariant measure $\mu$ and $\mu$ is equivalent to the Lebesgue measure (see \cite{metafune-pallara-wacker1}).
This measure plays an essential role in the analysis of the semigroup $T(t)$. Indeed, if the coefficients of ${\mathcal A}$ are unbounded, then the
$L^p$-spaces with respect to the measure $\mu$ (say $L^p(\Rd,\mu)$) are the best $L^p$-setting where to study the semigroup $T(t)$
(see e.g., \cite{FLun-2006, lorenzi-lunardi-elliptic, lunardi-ontheO-U, metafune-pruss-rhandi-schnaubelt}). As it is shown in \cite{pruss-rhandi-schnaubelt}, the usual Lebesgue $L^p$-spaces are not, in general, a good setting for the semigroup $T(t)$,
unless (restrictive) assumptions are prescribed on the coefficients of the associated elliptic operator (see also \cite{ALP} for the vector-valued case).
As a matter of fact, the measure $\mu$ is not explicit in general. In any case, both local and global regularity results for its density $\rho$ with respect to the Lebesgue measure are known in many cases
(see e.g., \cite{bogkroroc-00, newbook, MPR-globalprop05}). The typical way to study the regularity of $\rho$
is to see it as a distributional solution of the adjoint equation ${\mathcal A}^*\rho=0$.

The relevance of the invariant measure $\mu$ lies also on the fact that they allow to characterize the asymptotic behaviour as $t$ tends to $+\infty$ of the
semigroup they are associated to. Indeed, for any $f\in L^p(\Rd,\mu)$,  $T(t)f$ converges in $L^p(\Rd,\mu)$ to the average of $f$ with respect to the measure $\mu$,
as $t$ tends to $+\infty$ (see e.g., \cite{daprato-z2, newbook}). The convergence is also local uniform in $\Rd$ if $f$ is bounded and continuous (see e.g., \cite{LorLunSch16Str}).

For semigroups associated to systems of elliptic equations, as the case that we are considering here, the situation is much more complicated and the picture is still far to be completely clear. One of the most typical feature of the scalar case is the positivity of the semigroup $T(t)$, which follows from a variant of the classical maximum principle. This property and the ergodicity of $T(t)$ imply in a straightforward way that, whenever it exists, $\mu$ is a positive measure since
\begin{eqnarray*}
\int_{\Rd}fd\mu=\lim_{t\to +\infty}\frac{1}{t}\int_0^t(T(s)f)(0)ds,\qquad\;\,f\in C_b(\Rd).
\end{eqnarray*}
As it is well known, already in the case of bounded and smooth
enough coefficients, in general the semigroups associated to systems of elliptic operators do not preserve the cone of functions $\f$ with all nonnegative components. Indeed, \cite{Ots88Ont} shows that the semigroup associated to the operator $\A_0$ (with smooth and bounded coefficients), defined on smooth functions $\bm\zeta$ by
\begin{align*}
(\A_0{\bm\zeta})_j(x):= \sum_{h,k=1}^dq_{hk}(x)\frac{\partial^2\zeta_j}{\partial x_hx_k}(x)+\sum_{k=1}^d\sum_{i=1}^m
(B_k(x))_{ji}\frac{\partial\zeta_i}{\partial x_k}(x)+\sum_{i=1}^mC_{ji}\zeta_i(x)
\end{align*}
for any $x\in\Rd$ and $j=1,\ldots,m$, is positive if and only if the drift terms of $\A_0$ are diagonal, i.e.,
$(B_k)= b_kI_m$ for any $k=1,\ldots,d$, and the potential matrix $C$ has nonnegative elements
outside the main diagonal (see also \cite{KreMaz12Max}).

To the best of our knowledge, the first paper which deals with systems of invariant measures for systems of Kolmogorov equations is 
\cite{AAL}, where the case of {\it weakly coupled} systems with a potential term is considered, i.e., in that situation the operator $\A$ is defined, on smooth enough
functions $\bm\zeta$, by
\begin{eqnarray*}
(\A\boldsymbol{\zeta})_j(x)
=\sum_{h,k=1}^dq_{hk}(x)\frac{\partial^2\zeta_j(x)}{\partial x_hx_k}+\sum_{k=1}^db_k(x)\frac{\partial\zeta_j}{\partial x_k}(x)+\sum_{i=1}^m(C(x))_{ji}\zeta_i(x)
\end{eqnarray*}
for any $j=1,\ldots,m$, $x\in\Rd$. Under suitable conditions on $C$ which, in particular, imply that the associated semigroup $\T(t)$ is bounded, in \cite{AAL} we prove that the semigroup $\T(t)$ also preserves the cone of nonnegative functions and this makes the analysis easier. In particular, we are able to characterize all the systems of invariant measures in terms of the invariant measure of
the scalar semigroup $T(t)$ associated to the operator ${\mathcal A}={\rm Tr}(QD^2)+\langle {\bf b},\nabla \rangle$ in the space of bounded and continuous functions over $\Rd$.

This paper represents the second step in a better understanding of systems of invariant measures, its analogies and differences with the invariant measure of the scalar case.
Motivated by the scalar case and also by the results in \cite{AAL} we would like to define a system of invariant measure through the limit (in a suitable sense)
\begin{equation}
\lim_{t\to 0}\frac{1}{t}\int_0^t(\T(s)\f)ds,
\label{limit}
\end{equation}
when $\f:\Rd\to\Rm$ is an arbitrary bounded and continuous function.
The first problem that we have to face is that, in the scalar case, $T(t)$ is a bounded semigroup.
In general, this is no more true for semigroups associated to systems of Kolmogorov equations coupled up to the first-order  (see Remark \ref{rem-2.1}).
As a it is shown in \cite{AALT} the semigroup $\T(t)$ admits the integral representation
\begin{eqnarray*}
(\T(t)\f)_i(x)=\sum_{j=1}^m\int_{\Rd}f_j(y)p_{ij}(t,x,dy),\qquad\;\,\f\in B_b(\Rd;\R^m),\;\,i=1,\ldots,m,
\end{eqnarray*}
for any $x\in\Rd$, where each $p_{ij}(t,x,dy)$ is a signed measure. In Proposition \ref{lem-2.1} we show that the boundedness of $\T(t)$ is
equivalent to the boundedness of the family of measures $\{|p_{ij}|(t,x,dy): t>0, x\in\Rd\}$ for any $i,j=1,\ldots,m$, where $|p_{ij}|(t,x,dy)$ denotes the total variation
of the measure $p_{ij}(t,x,dy)$.
Nevertheless, even if this condition is not satisfied, under suitable conditions and using the pointwise gradient estimate in Proposition \ref{bundes} we prove that, for each $x_0\in\Rd$ and each $\f\in B_b(\Rd;\Rm)$, the function $(\T(\cdot)\f)(x_0)$
is bounded in $(0,+\infty)$ (see Theorem \ref{parodontax}). This fundamental result allows us to prove that the limit in \eqref{limit} exists in the sense that the function
$x\mapsto \frac{1}{t}\int_0^t(\T(s)\f)(x)ds$ converges locally uniformly in $\Rd$. The limit $\g_{\f}$, which is a continuous, but {\it a priori} an {\em unbounded} function, has  a controlled growth at infinity, and this property allows us to apply the semigroup $\T(t)$ to such a function $\g_{\f}$. It turns out that $\T(t)\g_{\f}=\g_{\f}$ for any $t>0$, i.e., $\g_{\f}$ is a fixed point of the semigroup $\T(t)$. Using again the pointwise gradient estimates, we can then conclude that $\g_{\f}$ is a constant function.
This allows us to define $m$ systems of invariant measures for the semigroup $\T(t)$ (say, $\{\mu_j^i: j=1,\ldots,m\}$, $i=1,\ldots,m$).

We then exploit some properties of the above systems of invariant measures. We show that each measure $\mu_j^i$ is absolutely continuous with respect to the Lebesgue measure
and prove some regularity and integrability properties of the density of their total variations with respect to the Lebesgue measure (see Proposition \ref{giampaolo-co} and Theorem \ref{capitano}). We also prove that a suitable unbounded function $\zeta$ (which is a power of the Lyapunov function of the scalar operator ${\mathcal A}={\rm Tr}(QD^2)+\langle {\bf b},\nabla\rangle$, see Hypothesis \ref{hyp_base}(iii)) is summable with respect to all the measures $|\mu_j^i|$. This gives a first partial information on the structure of the spaces $L^p(\Rd,|\mu_j^i|)$ and, combined with Theorem \ref{parodontax}(i), shows that $|\T(t)\f|$ is in $L^p(\Rd,|\mu_j^i|)$ for any $\f\in B_b(\Rd,\Rm)$ and $p\le p_*$ for a suitable exponent $p_*$ (explicitly computable). Then, in Theorem \ref{chimica} we characterize all the systems of invariant measures $\{\mu_j: j=1,\ldots,m\}$ such that the above function $\zeta$ belongs to $L^1(\Rd,\mu_j)$ for any $j$: they are linear combinations of the measures $\mu_j^i$ in the sense that there exist real constants $c_1,\ldots,c_m$ such that $\mu_j=\sum_{i=1}^mc_i\mu_j^i$ for any $j=1,\ldots,m$. This result shows, in particular, that systems of invariant measures are infinitely many.
Among all the systems of invariant measures, the $m$ systems $\{\mu_j^i: j=1,\ldots,m\}$ have a very relevant peculiarity: as the invariant measure of the scalar case, they are related to the long-time behaviour of the function $\T(t)\f$. More precisely, in Theorem \ref{venditti} we show that
$(\T(t)\f)_i$ converges to $\sum_{j=1}^m\int_{\Rd}f_jd\mu^i_j$ for any $i=1,\ldots,m$ and any
bounded and continuous function $\f$, locally uniformly in $\Rd$.

Finally, we confine to a particular case where the invariant measure of the scalar operator ${\mathcal A}$ is explicit and provide a sufficient condition for a system
of measures, absolutely continuous with respect to the Lebesgue measure, to be a system of invariant measures for $\T(t)$. Based on this result, we provide some concrete examples of systems of invariant measures, which all consist of signed measures.

The paper is organized as follows. In Section \ref{sect-2} we first introduce the main assumptions on the coefficients of the
operator $\A$ that we use in the paper. In particular, these assumptions guarantee the existence of both the semigroups $T(t)$ and $\T(t)$
and the invariant measure $\mu$ of the semigroup $T(t)$. We also provide a class of elliptic systems which satisfy our assumptions. Then, we prove that the so-called {\it weak generator} can be applied to the semigroup $\T(t)$
and characterize its domain. Subsection \ref{subsect-2.3} is devoted to pointwise gradient estimates, which relate the jacobian matrix of $\T(t)\f$ to the scalar semigroup $T(t)$
applied to the jacobian matrix of $\f$ or to $\f$ itself. This kind of estimates have been already proved in \cite{ALP} when the semigroup $\T(t)$ is associated to an elliptic operator with a nontrivial potential term. In that case the presence of the potential term was crucial to deduce the estimates.
To conclude, in Subsection \ref{subsect-2.4}, we prove some further relevant properties of the semigroup $\T(t)$ that we
need in the paper. In Section \ref{sect-3}, the main body of the paper, we prove the existence of systems of invariant measures for the semigroup $\T(t)$, study their main properties as well as the asymptotic behaviour of the function $\T(t)\f$ when $t$ tends to $+\infty$ and $\f$ is bounded and continuous over $\Rd$. Finally, in Appendix \ref{sect-app} we collect elliptic and parabolic {\it a priori} estimates which we use in the paper.

\subsection*{Notation.}
Vector-valued functions are displayed in bold style. Given a function $\f$ (resp. a sequence
$(\f_n)$) as above, we denote by $f_i$ (resp. $f_{n,i}$) its $i$-th component (resp. the $i$-th component
of the function $\f_n$).
By $B_b(\Rd;\Rm)$ we denote the set of all the bounded Borel measurable functions $\f:\Rd\to\Rm$, where $\|\f\|_{\infty}^2=\sum_{k=1}^m\sup_{x\in\Rd}|f_k(x)|^2$.
For any $k\ge 0$, $C^k_b(\R^d;\R^m)$ is the space of all $\f:\Rd\to\Rm$
whose components belong to $C^k_b(\R^d)$, where ``$b$'' stays for bounded. Similarly, the subscripts ``$c$'', ''$0$'' and ``{\it loc}'' stands for {\it compactly supported}, {\it vanishing at infinity} and {\it locally}, respectively.
The symbols $D_tf$, $D_i f$ and $D_{ij}f$, respectively, denote the time derivative, the first-order spatial derivative with respect to the $i$-th variable and
 the second-order spatial derivative with respect to the $i$-th and $j$-th variables.
We write $J_x\uu$ for the Jacobian matrix of $\uu$ with respect to the spatial variables, omitting the subscript $x$ when no confusion may arise.
By $\boldsymbol{e}_j$ we denote the $j$-th vector of the Euclidean basis of $\R^m$.
Finally, throughout the paper we denote by $c$ a positive constant, which may vary from line to line and, if not otherwise specified, may depend at most on $d$, $m$.
We write $c_{\delta}$ when we want to stress that the constant depends on $\delta$.

\section{Assumptions, preliminary results and gradient estimates}
\label{sect-2}
\subsection{Assumptions and preliminary results}
We consider the following standing assumptions on the coefficients of the operator $\A$ defined
in \eqref{finale} which we split into the sum of two differential operators as follows:
\begin{eqnarray*}
(\A\boldsymbol{\zeta})_j(x)=:({\mathcal A}\zeta_j)(x)+\sum_{k=1}^d\sum_{i=1}^m(B_k(x))_{ji}D_k\zeta_i(x)
\end{eqnarray*}
for any $x\in\Rd$ and any smooth enough function $\bm\zeta:\Rd\to\Rm$.

\begin{hyp}\label{hyp_base}
\begin{enumerate}[\rm(i)]
\item
The coefficients $q_{ij}=q_{ji}$ belong to
$C^{1+\alpha}_{\rm loc}(\R^d)$, for any $i,j=1,\ldots,d$ and some $\alpha\in (0,1)$.
Moreover, $\lambda_0:=\inf_{x\in\Rd}\lambda_Q(x)>0$ where $\lambda_Q(x)$ denotes the minimum eigenvalue of the matrix $Q(x)$;
\item
the coefficients $b_i$ and the entries of the matrices $B_i$ $(i=1,\ldots,d)$ belong to $C^{1+\alpha}_{\rm{loc}}(\R^d)$. Moreover, there exists a nonnegative function $\psi$ such that $|(B_{i})_{jk}|\leq \psi$ in $\Rd$, for any $j,k=1,\ldots,m$, $i=1,\ldots,d$ and $\xi:= \sup_{\Rd}\lambda_Q^{-\frac{1}{2}}\psi<+\infty$;
\item
there exist $a_*\in \R$, $c_*>0$ and a (Lyapunov) function
$1\le\varphi\in C^2(\R^d)$, blowing up as $|x|$ tends to $+\infty$, such that
${\mathcal A}\varphi\leq a_*-c_*\varphi$,
where ${\mathcal A}={\rm Tr}(QD^2)+\langle {\bf b}, \nabla\rangle$, and ${\bf b}=(b_1,\ldots,b_d)$;
\item
there exists $p_0\in (1,2]$ such that
\begin{equation*}
0  >\sigma_{p_0}:=\sup_{x\in\Rd}\bigg[\Lambda_{J_x{\bf b}}(x)\!+\!\bigg( \sum_{i,j=1}^d|D_jB_i(x)|^2\bigg )^{\frac{1}{2}}\!+\!\frac{d(m\xi\!+\!dk(x)(\lambda_Q(x))^{-\frac{1}{2}})^2}{4\min\{1,p_0-1\}}\bigg ],
\end{equation*}
where $\Lambda_{J_x{\bf b}}(x)$ is any function which bounds from above the quadratic form associated to $(J_x{\bf b})(x)$ and $\displaystyle k(x)=\max_{1\le i,j,h\le d}|D_hq_{ij}(x)|$ for any $x\in\Rd$;
\item
there exist two constants $\gamma>2$ and $c_{\gamma}>0$ such that $\psi^{\gamma}\le c_{\gamma}\varphi$ in $\Rd$.
\end{enumerate}
\end{hyp}

\begin{example}
{\rm Let $\A$ be the second order elliptic differential operator defined in \eqref{finale} with
\begin{align*}
Q(x)=(1+|x|^2)^pQ^0(x), \quad b(x)=-b_0x(1+|x|^2)^r, \quad B_i(x)=(1+|x|^2)^{s_i}B_i^0(x),
\end{align*}
for any $x\in\Rd$, where $Q^0(x)$ is a positive definite $d\times d$-matrix for any $x\in\Rd$, $b_0>0$, $B_i^0(x)$ are $m\times m$-matrices for any $x\in\Rd$,
the entries of the matrix-valued functions $Q^0$ and $B_i^0$ ($i=1,\ldots,d$) belong to $C^{1+\alpha}_b(\Rd)$ for some $\alpha\in (0,1)$,
and $p,r,s_i\ge 0$ satisfy the conditions $s_{\max}:=\max\{s_1,\ldots,s_d\}\leq p/2$ and $r>\max\{p,s_{\max}\}$. Clearly, Hypothesis \ref{hyp_base}(i) and (ii) are satisfied. In particular, $\lambda_Q(x)\ge\lambda_1(1+|x|^2)^p$
for any $x\in\Rd$ and $\xi\le\xi_0:=\sqrt{\lambda_1}\max_{i=1,\ldots,d}\|B_i^0\|_{\infty}$, where $\lambda_1$ denotes the infimum over $\Rd$ of the minimum eigenvalue of the matrix $Q^0(x)$.
Moreover, for any choice of $h\in\N$ the function $\varphi(x)=(1+|x|^2)^h$ is a Lyapunov function for the operator $\mathcal A={\rm Tr}(QD^2)+\langle {\bf b},\nabla \rangle$, so that Hypotheses \ref{hyp_base}(iii) and (v) are trivially satisfied, this latter one for any choice of $\gamma>2$. Finally,  since $k(x)\le c_0(1+|x|^2)^p$ for any $x\in\Rd$ and some positive constant $c_0$, we obtain that, if there exists $p_0\in(1,2]$ such that
\begin{align}
\label{denuncia}
\bigg (\sum_{i,j=1}^d(2s_i\|B_i^0\|_{\infty}+\|D_jB_i^0\|_{\infty})^2\bigg )^{\frac{1}{2}}+\frac{d(m\xi_0+dc_0\lambda_1^{-\frac{1}{2}})^2}{4\min\{1,p_0-1\}}<b_0,
\end{align}
then Hypothesis \ref{hyp_base}(iv) is fulfilled. Indeed,
\begin{align*}
& \Lambda_{J_x{\bf b}}(x)\!+\!\bigg( \sum_{i,j=1}^d|D_jB_i^0(x)|^2\bigg )^{\frac{1}{2}}\!+\!\frac{d(m\xi\!+\!dk(x)(\lambda_Q(x))^{-\frac{1}{2}})^2}{4\min\{1,p_0-1\}} \notag \\
\le &(1+|x|^2)^r\bigg[(1+|x|^2)^{s_{\max}-r}\bigg (\sum_{i,j=1}^d(2s_i(1+|x|^2)^{-\frac{1}{2}}\|B_i^0\|_{\infty}+\|D_jB_i^0\|_{\infty})^2\bigg )^{\frac{1}{2}}-b_0\notag \\
&\phantom{(1+|x|^2)^r\bigg [\,}+\frac{d(m\xi_0+dc_0\lambda_1^{-\frac{1}{2}}(1+|x|^2)^{-\frac{p}{2}})^2}{4\min\{1,p_0-1\}}(1+|x|^2)^{p-r}\bigg]
\end{align*}
for any $x\in\Rd$. Due to our choice of the parameters $p$, $r$ and $s$, the function in square brackets assume its maximum value at $x=0$, which is negative when \eqref{denuncia} is satisfied.}
\end{example}

Under (a part of) Hypotheses \ref{hyp_base} the following result holds true.

\begin{thm}[Section 4, \cite{metafune-pallara-wacker1}]\label{maremaremare}
Assume that Hypotheses $\ref{hyp_base}(i)$-$(iii)$ are satisfied. Then, for any $f\in C_b(\Rd)$ the Cauchy problem
\begin{align*}
\left\{
\begin{array}{lll}
D_tu(t,x)={\mathcal A}u(t,x), &\quad t\in(0,+\infty), & x\in\Rd, \\[1mm]
u(0,x)=f(x), & \quad & x\in\Rd,
\end{array}
\right.
\end{align*}
admits a unique classical solution $u_f\in C_b([0,+\infty)\times\Rd)\cap C^{1+\alpha/2,2+\alpha}_{\rm loc}((0,+\infty)\times\Rd)$ satisfying $\|u_f(t,\cdot)\|_\infty\leq \|f\|_\infty$ for any $t\geq0$. Moreover, if we set $T(t)f:=u_f(t,\cdot)$ for any $t>0$ and $f\in C_b(\Rd)$, and $T(0)=Id$, then $T(t)g_n$ converges to $T(t)g$ locally uniformly as $n$ tends to $+\infty$, for any $t>0$ and any
bounded sequence $(g_n)\subset C_b(\Rd)$ converging pointwise in $\Rd$ to $g\in C_b(\Rd)$.
\end{thm}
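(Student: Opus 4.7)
I would prove the theorem by the classical approximation scheme on balls, obtaining existence by compactness and uniqueness through the Lyapunov function in Hypothesis~\ref{hyp_base}(iii). For each $n\in\N$, I solve the Cauchy--Dirichlet problem for $D_t-{\mathcal A}$ on $(0,+\infty)\times B_n$ with lateral datum $0$ and initial datum $f_n\in C(\overline{B_n})$ obtained by smoothly cutting off $f$ so that $f_n\equiv f$ on $B_{n-1}$ and $f_n\equiv 0$ on $\partial B_n$. Classical parabolic theory on the bounded domain yields a unique bounded classical solution $u_n$, and the weak maximum principle gives $\|u_n\|_\infty\le\|f\|_\infty$. Interior parabolic Schauder estimates (Appendix~\ref{sect-app}) then produce uniform $C^{1+\alpha/2,2+\alpha}$--bounds on every compact subset of $(0,+\infty)\times\Rd$; Arzel\`a--Ascoli together with a diagonal extraction provide a subsequence converging, with the relevant derivatives, locally uniformly to a function $u_f\in C^{1+\alpha/2,2+\alpha}_{\rm loc}((0,+\infty)\times\Rd)$ which solves $D_tu_f={\mathcal A}u_f$ and satisfies $\|u_f\|_\infty\le\|f\|_\infty$.

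Continuity of $u_f$ up to $t=0$ is handled by a standard local barrier argument near each point $(0,x_0)$: where $|f-f(x_0)|$ is small I build comparison functions of the form $f(x_0)\pm\varepsilon\pm K|x-x_0|^2\pm Ct$ that dominate $\pm u_n$ on a short parabolic cylinder uniformly in large $n$, and the bound survives the limit. Uniqueness -- the step in which Hypothesis~\ref{hyp_base}(iii) is decisive -- proceeds by a Lyapunov argument: if $v$ is a bounded classical solution vanishing at $t=0$, I introduce $w_\varepsilon(t,x):=v(t,x)-\varepsilon e^{\lambda t}\varphi(x)$. A short computation using ${\mathcal A}\varphi\le a_*-c_*\varphi$ shows that, for $\lambda$ large enough, $D_tw_\varepsilon-{\mathcal A}w_\varepsilon\le 0$; since $v$ is bounded and $\varphi(x)\to+\infty$ as $|x|\to+\infty$, $w_\varepsilon$ attains its supremum at a finite point, and the classical weak maximum principle on a sufficiently large cylinder forces $\sup w_\varepsilon\le 0$. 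Letting $\varepsilon\to 0^+$ gives $v\le 0$, and the reverse inequality follows by symmetry. This also shows that the whole sequence $(u_n)$ (not merely a subsequence) converges to $u_f$, so $T(t)f:=u_f(t,\cdot)$ is well defined.

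For the continuity with respect to pointwise limits, let $(g_n)\subset C_b(\Rd)$ be equibounded by $M$ and pointwise convergent to $g\in C_b(\Rd)$. The contraction bound $\|T(t)g_n\|_\infty\le M$ together with the interior Schauder estimates shows that $(T(\cdot)g_n)_n$ is equicontinuous on every compact subset of $(0,+\infty)\times\Rd$; hence every subsequence has a sub-subsequence converging locally uniformly to some bounded $u\in C^{1,2}_{\rm loc}$ with $D_tu={\mathcal A}u$. The identification $u(0,\cdot)=g$ comes from the integral representation $(T(t)h)(x)=\int_\Rd h\,dp_t^x$, with $p_t^x$ a sub-Markov kernel obtained as the weak limit of the transition kernels of the $u_n$'s from the first paragraph: dominated convergence gives $(T(t)g_n)(x)\to (T(t)g)(x)$ pointwise, and combined with equicontinuity this upgrades to local uniform convergence. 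The main obstacle is the Lyapunov-based uniqueness, since boundedness of $u_f$ alone does not pin down the solution when the coefficients are unbounded, and Hypothesis~\ref{hyp_base}(iii) is the essential input.
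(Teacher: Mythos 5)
Your argument is correct and is essentially the standard proof of this result, which the paper itself does not prove but quotes from \cite{metafune-pallara-wacker1}: approximation by Cauchy--Dirichlet problems on balls with interior Schauder estimates and Arzel\`a--Ascoli for existence, barriers for continuity at $t=0$, the Lyapunov-function comparison $v-\varepsilon e^{\lambda t}\varphi$ for uniqueness, and the monotone limit of the Dirichlet sub-Markov kernels plus dominated convergence and interior equicontinuity for the stability under bounded pointwise convergence. No gaps worth flagging.
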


The semigroup $T(t)$ admits the following integral representation:
\begin{equation}
(T(t)f)(x)=\int_{\Rd}f(y)p(t,x,dy),\qquad\;\,t>0,\;\,x\in\Rd,\;\,f\in C_b(\Rd),
\label{valigia}
\end{equation}
where the $p(t,x,dy)$'s are probability Borel measures, each of them equivalent to the Lebesgue measure. In addition, Hypothesis \ref{hyp_base}$(iii)$ implies that $T(t)$ admits a unique invariant measure $\mu$, that is a
Borel probability measure which satisfies the condition
\begin{align*}
\int_{\Rd}T(t)fd\mu=\int_{\Rd}fd\mu, \quad t\geq0,
\end{align*}
for any $f\in C_b(\Rd)$. This result is due to Has'minskii (see \cite[Chapter 3, Theorem 5.1]{HAS}).
Hypothesis \ref{hyp_base}(iii) and the result in \cite{kunze-lorenzi-lunardi}, (see Lemma 5.3), imply that $\varphi \in L^1(\Rd,\mu)$, that one can apply the scalar semigroup $T(t)$ to the function $\varphi$ and
\begin{equation}
(T(t)\varphi)(x)\le c^{-1}_*a_*+\varphi(x),\qquad\;\,t>0,\;\,x\in\Rd,
\label{addio-00}
\end{equation}
where the constants $a_*$ and $c_*$ are the same as in the quoted hypothesis.

\medskip

Hypotheses \ref{hyp_base}(i)-(iii) ensure that also the vector-valued Cauchy problem
associated to the operator $\A$ admits a unique classical solution.

\begin{thm}[Theorem 2.9, \cite{AALT}]
\label{point_prop}
Let Hypotheses $\ref{hyp_base}$ be satisfied. Then, for any $\f\in C_b(\Rd;\R^m)$ the Cauchy problem
\begin{align}
\left\{
\begin{array}{lll}
D_t\uu(t,x)=\A\uu(t,x),\quad & t\in(0,+\infty), & x\in\Rd,\\[1mm]
\uu(0,x)=\f(x), & & x\in\Rd,
\end{array}
\right.
\label{alimentari}
\end{align}
admits a unique locally in time bounded classical solution
$\uu$. Moreover, $\uu$ belongs to $C^{1+\alpha/2,2+\alpha}_{\rm loc}((0,+\infty)\times\Rd;\R^m)$ and
\begin{equation}
\label{pointwise}
|\uu(t,x)|^2\leq e^{2\beta t}(T(t)|\f|^2)(x),\qquad\;\,(t,x)\in [0,+\infty)\times \Rd,
\end{equation}
where $\beta=4^{-1}m^2d\xi^2$ and $\xi$ is defined in Hypothesis $\ref{hyp_base}(ii)$. As a byproduct,
\begin{equation}
|\uu(t,x)|\leq e^{\beta t}\|\f\|_{\infty},\qquad\;\,(t,x)\in [0,+\infty)\times \Rd,\;\,\f\in C_b(\Rd;\R^m).
\label{rettore}
\end{equation}
\end{thm}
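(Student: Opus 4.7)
The plan is to establish the pointwise bound \eqref{pointwise} first, since the sup estimate \eqref{rettore}, uniqueness, and the regularity statement all follow from it routinely. The core mechanism is a differential inequality of the form $(D_s-\mathcal{A})|\uu|^2\leq 2\beta|\uu|^2$ for classical solutions of \eqref{alimentari}, which is then turned into comparison with the scalar semigroup $T(s)$ by approximation on bounded balls and the classical scalar parabolic maximum principle.

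For the differential inequality, I would set $v:=|\uu|^2=\sum_{j=1}^m u_j^2$ for any classical solution $\uu$. A direct computation yields
\begin{align*}
D_sv-\mathcal{A}v=2\langle\uu,\A\uu-\mathcal{A}\uu\rangle-2\sum_{h,k=1}^dq_{hk}\langle D_h\uu,D_k\uu\rangle,
\end{align*}
with $\mathcal{A}\uu$ understood componentwise. The quadratic gradient term is bounded below by $2\lambda_Q|J\uu|^2$ by uniform ellipticity, while the coupling term $(\A\uu-\mathcal{A}\uu)_j=\sum_{k,i}(B_k)_{ji}D_ku_i$ satisfies $|2\langle\uu,\A\uu-\mathcal{A}\uu\rangle|\leq 2m\sqrt{d}\,\psi|\uu||J\uu|$ after two applications of Cauchy-Schwarz combined with $|(B_k)_{ji}|\leq\psi$. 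Young's inequality in the form $2ab\leq(2\lambda_Q)^{-1}a^2+2\lambda_Q b^2$ absorbs the gradient factor, producing
\begin{align*}
D_sv-\mathcal{A}v\leq\frac{m^2d\psi^2}{2\lambda_Q}\,v\leq\frac{m^2d\xi^2}{2}\,v=2\beta v
\end{align*}
by the very definition of $\xi$ in Hypothesis \ref{hyp_base}(ii). Thus $w(s,x):=e^{-2\beta s}|\uu(s,x)|^2$ is a classical subsolution of the scalar operator $D_s-\mathcal{A}$.

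For existence and the comparison step, I would work on bounded balls $B_n$: solving the Cauchy-Dirichlet problem for $\A$ on $(0,T)\times B_n$ with zero lateral data and initial datum $\f$ produces classical solutions $\uu_n$ by standard parabolic theory (cf.\ the appendix). The inequality above still holds on $B_n$, and the classical scalar parabolic maximum principle gives $|\uu_n(s,x)|^2\leq e^{2\beta s}z_n(s,x)$, where $z_n$ solves the scalar Cauchy-Dirichlet problem for $\mathcal{A}$ on $B_n$ with zero lateral data and initial datum $|\f|^2$. In particular $|\uu_n|\leq e^{\beta s}\|\f\|_\infty$ uniformly in $n$. Interior Schauder estimates then yield uniform $C^{1+\alpha/2,2+\alpha}_{\rm loc}$ bounds, and Ascoli-Arzel\`a extracts a locally uniformly convergent subsequence whose limit $\uu$ is a classical solution of \eqref{alimentari} with the stated regularity. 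Since $z_n\to T(s)|\f|^2$ locally uniformly as $n\to+\infty$, passing to the limit delivers \eqref{pointwise}; the bound \eqref{rettore} is then immediate because $T(s)$ preserves constants, and uniqueness follows at once by applying \eqref{pointwise} to the difference of two solutions with the same datum.

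The main obstacle is the rigor of the comparison step: on the whole of $\Rd$ the semigroup $T(s)$ does not afford a direct global maximum principle against arbitrary classical subsolutions of $D_s-\mathcal{A}$ enjoying only growth bounds. The approximation on the balls $B_n$ circumvents this because there classical parabolic comparison is available; the scheme then depends only on the local uniform convergence of $\uu_n\to\uu$ (from the Schauder-Ascoli-Arzel\`a step) and of $z_n\to T(s)|\f|^2$ (from the scalar construction recalled in Theorem \ref{maremaremare}), both of which are standard.
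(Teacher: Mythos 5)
The paper does not prove this statement; it is quoted verbatim from \cite{AALT} (Theorem 2.9), so the comparison is against the standard argument used there. Your core computation is exactly that argument and is correct: the identity
$D_sv-\mathcal{A}v=2\langle\uu,\A\uu-\mathcal{A}\uu\rangle-2\sum_{h,k}q_{hk}\langle D_h\uu,D_k\uu\rangle$ for $v=|\uu|^2$, the bounds $2|\langle\uu,\A\uu-\mathcal{A}\uu\rangle|\le 2m\sqrt{d}\,\psi|\uu||J\uu|$ and $\sum_{h,k}q_{hk}\langle D_h\uu,D_k\uu\rangle\ge\lambda_Q|J\uu|^2$, and the Young step all check out and yield precisely $2\beta=2^{-1}m^2d\xi^2$. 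The approximation-on-balls scheme plus interior Schauder and Arzel\`a--Ascoli is also the standard route (the reference uses Neumann rather than Dirichlet approximations, which has the minor advantage that constants are preserved and the sup bound is immediate, but Dirichlet works too).

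There are, however, two gaps. The serious one is uniqueness. You derive \eqref{pointwise} only for the \emph{constructed} solution, i.e.\ the limit of the $\uu_n$, because the comparison with $z_n$ lives on the balls $B_n$ where the lateral data of $\uu_n$ are prescribed. An arbitrary locally-in-time-bounded classical solution is not a priori such a limit, and on $\partial B_n$ its values are uncontrolled, so the ball-by-ball comparison gives nothing as $n\to+\infty$. Hence ``applying \eqref{pointwise} to the difference of two solutions'' is circular. What is actually needed is a maximum principle on all of $\Rd$ for subsolutions of $D_t-\mathcal{A}-2\beta$ that are merely bounded on strips $[0,T]\times\Rd$; this is exactly where Hypothesis \ref{hyp_base}(iii) (the Lyapunov function $\varphi$) must enter, via the Has'minskii-type argument (cf.\ \cite[Theorem 3.1.6]{newbook}, which this paper itself invokes for the analogous step in Proposition \ref{incarico}). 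Your proof never uses Hypothesis \ref{hyp_base}(iii), and without it uniqueness of bounded solutions genuinely fails for operators with unbounded coefficients. The second, more minor, gap is the attainment of the initial datum: Schauder estimates give convergence of $\uu_n$ only on compact subsets of $(0,+\infty)\times\Rd$, so the continuity of the limit at $t=0$ with value $\f$ requires a separate (standard, e.g.\ barrier or localization) argument, which you assert rather than supply.
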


\begin{rmk}
\label{rem-2.1}
{\rm In the scalar case, the semigroup associated to an elliptic operator ${\mathcal A}$ with zero potential term is always contractive as
a straightforward consequence of the maximum principle. In the vector-valued case, the maximum principle does not hold if
the elliptic operator is coupled at the first-order as our operator is. This shows why we cannot expect
\eqref{pointwise} with $\beta=0$. On the other hand, we can expect neither the semigroup $\T(t)$ to be bounded. Indeed, consider the
two-dimensional elliptic operator $\boldsymbol{\mathcal A}$ defined on smooth functions $\boldsymbol{\zeta}=(\zeta_1,\zeta_2)$ by
\begin{eqnarray*}
\boldsymbol{\mathcal A}\boldsymbol{\zeta}=(D_{xx}\zeta_1-D_x\zeta_1+D_x\zeta_2,D_{xx}\zeta_2-5D_x\zeta_1+D_x\zeta_2).
\end{eqnarray*}
A straightforward computation shows that, if $\f(x)=(\cos(x),2\sin(x)+\cos(x))$ for any $x\in\R$, then
$(\T(t)\f)(x)=(e^t\cos(x),e^t(2\sin(x)+\cos(x))$ for any $t>0$ and $x\in\R$. As a consequence, $\|\T(t)\f\|_{\infty}\ge e^t$ for any $t>0$.}
\end{rmk}

Theorem \ref{point_prop} allows to introduce the vector-valued semigroup
$\T(t)$ of bounded linear operators in $C_b(\Rd;\Rm)$ by setting $(\T(t)\f)(x):=\uu(t,x)$,
for any $t\ge 0$ and $x\in\Rd$, where $\uu$ is the classical solution to the Cauchy problem \eqref{alimentari}. By \cite[Theorem 3.3]{AALT},
the following integral representation formula
\begin{align}
({\bf T}(t)\f)_i(x)=\sum_{j=1}^m\int_{\Rd}f_j(y)p_{ij}(t,x,dy),\qquad\;\,\f\in C_b(\Rd;\R^m),\;\,i=1,\ldots,m,
\label{domanda}
\end{align}
holds true for any $t>0$, where $\{p_{ij}(t,x,dy): t>0,\, x\in\Rd,\, i,j=1,\ldots, m\}$ is a family
of finite signed Borel measures, which are absolutely continuous with respect to the Lebesgue measure. Formula \eqref{domanda} allows to extend easily the semigroup $\T(t)$ to
$B_b(\Rd;\Rm)$, by approximating any $\f\in B_b(\Rd;\Rm)$ by a bounded sequence $(\f_n)\subset C_b(\Rd;\Rm)$, which converges to $\f$ almost everywhere (with respect to each measure $p_{ij}(t,x,dy)$) in $\Rd$. We can also use this formula to characterize the boundedness of the semigroup $\T(t)$ in terms of the boundedness of the total variations $|p_{ij}|(t,x,dy)$ of the measures
$p_{ij}(t,x,dy)$ ($t>0, x\in\Rd)$.

\begin{prop}
\label{lem-2.1}
The semigroup $\T(t)$ is bounded in $C_b(\Rd;\Rm)$ if and only if
the family of measures $\{|p_{ij}|(t,x,dy): t>0, x\in\Rd\}$ is bounded for any $i,j=1,\ldots,m$.
\end{prop}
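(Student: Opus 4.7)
The proof splits into the two implications, with the forward direction being essentially trivial and the converse requiring a small approximation argument.

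For the sufficiency direction, suppose there exist constants $M_{ij}$ such that $|p_{ij}|(t,x,\Rd)\le M_{ij}$ for every $t>0$ and $x\in\Rd$. Given $\f\in C_b(\Rd;\Rm)$, the integral representation \eqref{domanda} together with the elementary estimate $|\int h\,d\nu|\le \|h\|_\infty |\nu|(\Rd)$ valid for any finite signed measure $\nu$ yields
\begin{align*}
|(\T(t)\f)_i(x)|\le \sum_{j=1}^m \|f_j\|_\infty |p_{ij}|(t,x,\Rd)\le \bigg(\sum_{j=1}^m M_{ij}\bigg)\|\f\|_\infty,
\end{align*}
uniformly in $t$ and $x$, so that $\T(t)$ is bounded in $C_b(\Rd;\Rm)$.

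For the converse, assume $\|\T(t)\f\|_\infty\le M\|\f\|_\infty$ for every $t>0$ and $\f\in C_b(\Rd;\Rm)$, and fix indices $i,j$ and a pair $(t,x)$. Taking $\f=g\boldsymbol e_j$ with $g\in C_b(\Rd)$ and $\|g\|_\infty\le 1$, the representation \eqref{domanda} gives
\begin{align*}
\bigg|\int_{\Rd}g(y)\,p_{ij}(t,x,dy)\bigg|=|(\T(t)\f)_i(x)|\le M.
\end{align*}
The task is then to recover the total variation $|p_{ij}|(t,x,\Rd)$ as a supremum of such integrals over continuous test functions of sup-norm at most one.

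To do so, I would invoke the Hahn--Jordan decomposition to write $p_{ij}(t,x,\cdot)=p_{ij}^+(t,x,\cdot)-p_{ij}^-(t,x,\cdot)$, carried by disjoint Borel sets $P$ and $N$, so that $|p_{ij}|(t,x,\Rd)=\int(\1_P-\1_N)\,dp_{ij}(t,x,\cdot)$. Since $p_{ij}(t,x,dy)$ is absolutely continuous with respect to the Lebesgue measure (hence a regular finite measure), Lusin's theorem produces, for each $\varepsilon>0$, a function $g_\varepsilon\in C_b(\Rd)$ with $\|g_\varepsilon\|_\infty\le 1$ that coincides with $\1_P-\1_N$ outside a set of $|p_{ij}|(t,x,\cdot)$-measure less than $\varepsilon$. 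Passing to the limit $\varepsilon\to 0^+$ in $\int g_\varepsilon\,dp_{ij}(t,x,\cdot)$ and using the uniform bound $M$ gives $|p_{ij}|(t,x,\Rd)\le M$.

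The only subtle point is the Lusin-type approximation; no genuine obstacle arises, since the absolute continuity of $p_{ij}(t,x,\cdot)$ already recorded after \eqref{domanda} makes it automatically inner and outer regular. Alternatively, one may first extend $\T(t)$ to $B_b(\Rd;\Rm)$ via \eqref{domanda}, observe that the bounded-convergence argument described after \eqref{domanda} preserves the operator norm $M$, and then apply the bound directly to the bounded Borel function $g=\1_P-\1_N$, bypassing the approximation step entirely.
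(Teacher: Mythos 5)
Your proposal is correct and follows essentially the same route as the paper: both directions hinge on the representation formula \eqref{domanda}, and the nontrivial implication is handled via the Hahn decomposition of $p_{ij}(t,x,dy)$ together with an approximation of the indicator functions of the positive and negative sets by continuous functions of sup-norm at most one, justified by the absolute continuity of $p_{ij}(t,x,dy)$ with respect to the Lebesgue measure. The paper approximates $\chi_P$ and $\chi_N$ separately by bounded sequences converging $p_{ij}(t,x,dy)$-almost everywhere rather than invoking Lusin's theorem on $\chi_P-\chi_N$, but this is only a cosmetic difference.
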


\begin{proof}
Suppose that the semigroup $\T(t)$ is bounded and fix $t>0$, $x\in\Rd$ and $i,j\in\{1,\ldots,m\}$.
By the Hahn decomposition theorem, there exist two disjoint Borel sets $P_x^{ij}$ and $N_x^{ij}$, whose union is $\Rd$, which are, respectively,
the supports of the positive part $p_{ij}^+(t,x,dy)$ and of the negative part $p_{ij}^-(t,x,dy)$ of the measure $p_{ij}(t,x,dy)$.

Since each measure $p_{ij}(t,x,dy)$ is absolutely continuous with respect to the Lebesgue measure, we can determine two bounded sequences $(f_n),(g_n)\subset C_b(\Rd)$
converging to $\chi_{P^{ij}_x}$ and
$\chi_{N^{ij}_x}$, respectively, almost everywhere in $\Rd$ (with respect to the measure $p_{ij}(t,x,dy)$). We set $\f_n=f_n \e_j$ and
$\g_n=g_n \e_j$ and observe that
\begin{align*}
p_{ij}^+(t,x,\Rd)& =p_{ij}(t,x,P^{ij}_x)=\int_{\Rd}\chi_{P^{ij}_x}(y)p_{ij}(t,x,dy) \\
& =\lim_{n\to +\infty}\int_{\Rd}f_n(y)p_{ij}(t,x,dy)=\lim_{n\to +\infty}(\T(t)\f_n)_i(x).
\end{align*}
From the boundedness of each operator $\T(t)$ and the previous formula we deduce that $\sup_{(t,x)\in (0,+\infty)\times\Rd}p_{ij}^+(t,x,\Rd)<+\infty$.
Replacing $(\f_n)$ with the sequence $(\g_n)$ and arguing similarly, we conclude that $\sup_{(t,x)\in (0,+\infty)\times\Rd}p_{ij}^-(t,x,\Rd)<+\infty$
Thus, the family $\{|p_{ij}|(t,x,dy): t>0, x\in\Rd\}$ is bounded for any $i,j=1,\ldots,m$.

The other part of the statement follows trivially from the representation formula \eqref{domanda}. Indeed, let $M>0$ be any constant such that $|p_{ij}|(t,x,\Rd)\le M$ for any $t>0$, $x\in\Rd$, $i,j=1,\ldots,m$. Then, we can estimate
\begin{align*}
|(\T(t)\f)_i(x)|\le &\bigg |\sum_{j=1}^m\int_{\Rd}f_j(y)p_{ij}(t,x,dy)\bigg |
\le \sum_{j=1}^m\int_{\Rd}|f_j(y)||p_{ij}|(t,x,dy)\\
\le &M\sum_{j=1}^m\|f_j\|_{\infty}
\le M\sqrt{m}\|\f\|_{\infty}
\end{align*}
for any $t>0$, $x\in\Rd$, $i=1,\ldots,m$, $\f\in C_b(\Rd;\Rm)$  and we are done.
\end{proof}

\subsection{The weak generator of $\T(t)$}
As it is known, (in general) semigroups associated with scalar elliptic operators with unbounded coefficients are not strongly continuous in $C_b(\Rd)$. However,
it is possible to associate the so called \emph{weak} generator to them, in two different ways (see, e.g., \cite{newbook, metafune-pallara-wacker1}).
This approach has been already extended to vector-valued weakly coupled elliptic operators with unbounded coefficients in \cite{DelLor11OnA}. We show that it works also in our case.

The first approach considered leads to the definition of the resolvent of the generator by means of the Laplace transform. Indeed, thanks to estimate \eqref{rettore},
the function $t\mapsto e^{-\lambda t}(\T(t)\f)(x)$ is continuous and belongs to $L^1((0,+\infty))$ for any $\lambda>\beta$ and $x \in \Rd$. Hence, we can define bounded operators $\Res(\lambda)$ in $C_b(\Rd;\R^m)$ for $\lambda>\beta$ through the formula
\begin{equation}
(\Res(\lambda)\f)(x)=\int_0^{+\infty}e^{-\lambda t}(\T(t)\f)(x)dt,\qquad\;\, x \in \R^d,\,\f\in C_b(\Rd;\R^m).
\label{Res}
\end{equation}
The family $\{\Res(\lambda): \lambda >\beta\}$ satisfies the resolvent identity and every operator $\Res(\lambda)$ is injective in $C_b(\Rd;\R^m)$,
so that there exists a unique closed operator $(\widehat{\boldsymbol{A}}, \widehat{\boldsymbol{D}})$ such that $\Res(\lambda)=(\lambda-\widehat{\boldsymbol{A}})^{-1}$
for any $\lambda>\beta$, i.e., $\lambda-\widehat{\boldsymbol{A}}$ is bijective from $\widehat{\boldsymbol{D}}$ onto $C_b(\Rd;\R^m)$ for any $\lambda>\beta$ (see e.g., 
\cite[Section 8.4]{yosida}).

On the other hand, we can define the weak generator of $\T(t)$ mimicking the classical definition of infinitesimal generator of a strongly continuous semigroup, by setting
\begin{align*}
&\widetilde{\boldsymbol{D}}=\Bigg\{\uu\in C_b(\Rd;\R^m):\,\, \sup_{t>0}\bigg\|\frac{\T(t)\uu-\uu}{t}\bigg\|_{\infty}<+\infty\; {\rm and}\; \exists \g\in C_b(\Rd;\R^m)\\
&\phantom{\widetilde{\boldsymbol{D}}=\Bigg\{\;\,}\textrm{ such that } \lim_{t \to 0^+}\frac{(\T(t)\uu)(x)-\uu(x)}{t}=\g(x) \,\,\forall x \in \Rd\Bigg\},\\[2mm]
&\widetilde{\boldsymbol{A}}\uu=\g,\quad \uu \in {\widetilde{\boldsymbol{D}}}.
\end{align*}

The same arguments used in the scalar case (see \cite[Section 5]{metafune-pallara-wacker1}) show that $\hat{\boldsymbol{A}}$ and $\widetilde{\boldsymbol{A}}$ actually coincide.
Thus, we can set $\boldsymbol{A}:=\hat{\boldsymbol{A}}=\widetilde{\boldsymbol{A}}$, $\boldsymbol{D}:=\hat{\boldsymbol{D}}=\widetilde{\boldsymbol{D}}$ and prove
the following characterization for the weak generator $(\boldsymbol A, \boldsymbol D)$.

\begin{prop}
\label{incarico}
The weak generator $(\boldsymbol A, \boldsymbol D)$ of the semigroup $\T(t)$ coincides with the operator $(\A, D_{\rm max}(\A))$, where
$D_{\rm max}(\A)$ denotes the domain of the maximal realization of the operator $\A$ in $C_b(\Rd;\R^m)$, i.e.,
\begin{eqnarray*}
D_{\rm max}(\A)=\bigg\{\uu \in C_b(\Rd;\R^m)\cap\bigcap_{1\le p<\infty}W^{2,p}_{\rm loc}(\Rd,\R^m): \A\uu \in C_b(\Rd;\R^m)\bigg\}.
\end{eqnarray*}
Moreover, for any $\f \in D_{\rm max}(\A)$ and $t>0$ the function $\T(t)\f$
belongs to $D_{\rm max}(\A)$ and $\T(t)\A\f=\A\T(t)\f$ for any $t>0$. Finally, for any $\f\in D_{\rm max}(\A)$ and $x\in\Rd$, the function $\T(\cdot)\f$
is continuously differentiable in $[0,+\infty)$ with
\begin{align}
\label{fantine}
\frac{d}{dt}(\T(t)\f)(x)=(\T(t)\A\f)(x), \qquad t\geq0.
\end{align}
\end{prop}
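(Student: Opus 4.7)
The plan is to follow the template of the scalar case (\cite[Section 5]{metafune-pallara-wacker1}) and of its weakly coupled extension in \cite{DelLor11OnA}, adapting it to our fully first-order coupled situation. I will first establish the identification $\boldsymbol D=D_{\max}(\A)$ with $\boldsymbol A=\A$ by proving the two inclusions, and then derive the invariance of $D_{\max}(\A)$ under $\T(t)$, the commutation $\A\T(t)\f=\T(t)\A\f$ and \eqref{fantine} from the identity $\Res(\lambda)\T(t)=\T(t)\Res(\lambda)$ together with the $(t,x)$-regularity of the classical solution of \eqref{alimentari}.

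For the inclusion $\boldsymbol D\subseteq D_{\max}(\A)$, I start from $\f\in\boldsymbol D$ and set $\h:=(\lambda-\boldsymbol A)\f\in C_b(\Rd;\Rm)$, so that $\f=\Res(\lambda)\h$ for every $\lambda>\beta$. I would use estimate \eqref{rettore} to justify the absolute convergence of the Laplace integral \eqref{Res} in $C_b(\Rd;\Rm)$ and the interior parabolic Schauder estimates collected in the appendix to differentiate under the integral sign, obtaining $\Res(\lambda)\h\in C^{2+\alpha}_{\rm loc}(\Rd;\Rm)\cap\bigcap_{p<\infty}W^{2,p}_{\rm loc}(\Rd;\Rm)$. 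Integrating by parts in $t$ in the identity
\begin{equation*}
\int_0^{+\infty}e^{-\lambda t}D_t(\T(t)\h)\,dt=\int_0^{+\infty}e^{-\lambda t}\A(\T(t)\h)\,dt,
\end{equation*}
where the interchange of $\A$ and the integral on the right-hand side is legitimate by the same estimates, then yields $\A\Res(\lambda)\h=\lambda\Res(\lambda)\h-\h$ in the classical sense, so that $\f\in D_{\max}(\A)$ with $\A\f=\lambda\f-\h=\boldsymbol A\f$.

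The reverse inclusion $D_{\max}(\A)\subseteq\boldsymbol D$ rests on the injectivity of $\lambda-\A$ on $D_{\max}(\A)$ for $\lambda>\beta$, which I expect to be the main obstacle: unlike the scalar case, no maximum principle is available for the operator $\A$. My plan is to exploit instead the uniqueness of locally in time bounded classical solutions of \eqref{alimentari} together with \eqref{rettore}. If $\vv\in D_{\max}(\A)$ satisfies $\A\vv=\lambda\vv$, an elliptic bootstrap yields $\vv\in C^{2+\alpha}_{\rm loc}(\Rd;\Rm)$, and the function $\uu(t,x):=e^{\lambda t}\vv(x)$ turns out to be a locally in time bounded classical solution of \eqref{alimentari} with initial datum $\vv$. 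Theorem \ref{point_prop} then forces $\T(t)\vv=e^{\lambda t}\vv$, while \eqref{rettore} gives $e^{\lambda t}\|\vv\|_\infty\le e^{\beta t}\|\vv\|_\infty$ for all $t\ge 0$; since $\lambda>\beta$, this forces $\vv\equiv 0$. Given $\f\in D_{\max}(\A)$, I will then put $\h:=\lambda\f-\A\f\in C_b(\Rd;\Rm)$ and observe that $\tilde\f:=\Res(\lambda)\h\in\boldsymbol D\subseteq D_{\max}(\A)$ by the previous step, with $\A\tilde\f=\lambda\tilde\f-\h$; hence $(\lambda-\A)(\f-\tilde\f)=0$ in $D_{\max}(\A)$, so that $\f=\tilde\f\in\boldsymbol D$ and $\boldsymbol A\f=\A\f$.

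For the last assertions, the bound $\|\T(s)\|_{\mathcal L(C_b(\Rd;\Rm))}\le e^{\beta s}$ coming from \eqref{rettore} allows one to interchange $\T(t)$ with the integral defining $\Res(\lambda)$, yielding $\T(t)\Res(\lambda)=\Res(\lambda)\T(t)$. Writing any $\f\in D_{\max}(\A)$ as $\f=\Res(\lambda)(\lambda\f-\A\f)$, one concludes that $\T(t)\f=\Res(\lambda)\T(t)(\lambda\f-\A\f)$ lies in the range of $\Res(\lambda)=(\lambda-\A)^{-1}$, i.e., in $D_{\max}(\A)$, and that $\A\T(t)\f=\T(t)\A\f$. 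To obtain \eqref{fantine}, I fix $x\in\Rd$ and set $g(t):=(\T(t)\f)(x)$: on $(0,+\infty)$, $g\in C^{1+\alpha/2}$ by Theorem \ref{point_prop} and $g'(t)=D_t(\T(t)\f)(x)=(\A\T(t)\f)(x)=(\T(t)\A\f)(x)$. At $t=0^+$, the right-derivative of $g$ equals $(\A\f)(x)$ by the very definition of $\boldsymbol D$, and the continuity at $t=0$ of the classical solution $\T(\cdot)\A\f$ (whose initial datum $\A\f$ belongs to $C_b(\Rd;\Rm)$) forces $(\T(t)\A\f)(x)\to(\A\f)(x)$ as $t\to 0^+$; hence $g\in C^1([0,+\infty))$ with $g'(t)=(\T(t)\A\f)(x)$ for every $t\ge 0$.
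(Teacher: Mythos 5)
Your overall architecture is sound, but the first inclusion $\boldsymbol D\subseteq D_{\rm max}(\A)$ contains a genuine gap. You propose to obtain $\Res(\lambda)\h\in C^{2+\alpha}_{\rm loc}$ by ``differentiating under the integral sign'' using the interior Schauder estimates of Theorem \ref{thm-A2}, and to interchange $\A$ with the Laplace integral ``by the same estimates''. Those estimates, however, are interior in time: the constant depends on $\tau$ and degenerates as $\tau\to 0^+$, and in this setting the second-order spatial derivatives of $\T(t)\h$ are expected to blow up like $t^{-1}$ as $t\to 0^+$ (already the first-order derivatives behave like $t^{-1/2}$ by Proposition \ref{bundes}). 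Hence $t\mapsto e^{-\lambda t}\A(\T(t)\h)$ need not be integrable near $t=0$ and neither dominated convergence nor a naive interchange applies; the same issue affects your integration by parts, whose left-hand side is only an improper integral. This is precisely why the paper truncates to $\uu_n=\int_{1/n}^n e^{-\lambda t}\T(t)\f\,dt$, computes $\A\uu_n$ from the integrated equation \eqref{aun} (so that $\sup_n\|\A\uu_n\|_\infty<+\infty$ without ever estimating $\A\T(t)\f$ near $t=0$), and then uses the interior $L^p$ estimates \eqref{lp_int} and weak $W^{2,p}_{\rm loc}$ compactness to conclude that $\uu\in W^{2,p}_{\rm loc}$ and $\A\uu=\lambda\uu-\f$. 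Note also that the paper only claims $W^{2,p}_{\rm loc}$ regularity for elements of $D_{\rm max}(\A)$, not $C^{2+\alpha}_{\rm loc}$. Your step can be repaired either by this truncation argument or by first verifying $\A\uu=\lambda\uu-\h$ in the sense of distributions (testing against $C^\infty_c$ functions and integrating by parts in $t$ as an improper integral) and then invoking elliptic $L^p$ regularity, but as written it does not go through.

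Your treatment of the injectivity of $\lambda-\A$ on $D_{\rm max}(\A)$ is a genuinely different, and valid, route. The paper runs a Bernstein-type computation on $|\uu|^2$ (identity \eqref{biagio}), absorbs the coupling term using Hypotheses \ref{hyp_base}(i)--(ii), and applies the scalar maximum principle of \cite[Theorem 3.1.6]{newbook} to the auxiliary operator $\tfrac12{\mathcal A}+\tfrac{m^2d\xi^2}{4}$; you instead observe that $e^{\lambda t}\vv$ is a locally in time bounded classical solution of \eqref{alimentari} (after an elliptic bootstrap giving $\vv\in C^{2+\alpha}_{\rm loc}$), so uniqueness in Theorem \ref{point_prop} forces $\T(t)\vv=e^{\lambda t}\vv$, which contradicts \eqref{rettore} unless $\vv=0$. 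Your argument is shorter and makes transparent why the restriction $\lambda>\beta$ appears; the paper's argument is self-contained at the elliptic level and does not lean on parabolic uniqueness. Your derivation of the invariance of $D_{\rm max}(\A)$ under $\T(t)$ and of the commutation via $\T(t)\Res(\lambda)=\Res(\lambda)\T(t)$ is also a clean alternative to the paper's pointwise estimate \eqref{solesole}, and your proof of \eqref{fantine} matches the paper's.
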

\begin{proof}
Fix $\uu \in \boldsymbol D$, $\lambda>\beta$ and let $\f \in C_b(\Rd;\R^m)$ be such that $\uu= \Res(\lambda, \boldsymbol A)\f$. For any $n\in \N$ let the function $\uu_n$
be defined by
\begin{equation*}
\uu_n(x)=\int_{1/n}^n e^{-\lambda t}(\T(t)\f)(x)dt,\qquad\;\, x \in \Rd.
\end{equation*}
Taking estimate \eqref{rettore} into account we deduce that $\sup_{n\in \N}\|\uu_n\|_\infty<+\infty$ and $\uu_n$ converges to $\uu$ uniformly in $\Rd$ as $n$ tends to
$+\infty$. Moreover,
\begin{align}\label{aun}
\A \uu_n(x)&= \int_{1/n}^n e^{-\lambda t}(\A\T(t)\f)(x)dt= \int_{1/n}^n e^{-\lambda t}(D_t\T(t)\f)(x)dt\notag\\
&= e^{-\lambda n}(\T(n)\f)(x)-e^{-\lambda/n}(\T(n^{-1})\f)(x)+\lambda \uu_n,
\end{align}
whence $\sup_{n\in \N}\|\A \uu_n\|_\infty<+\infty$ as well. Estimate \eqref{lp_int} yields that
$\|u_{n,k}\|_{W^{2,p}(B(0,r))}$ $\le c_{p,r}$ for any $n\in\N$, $k=1,\ldots, m$, $r>0$ and $p\in(1,+\infty)$.
By compactness, there exist $\bar{u}_1,\ldots, \bar{u}_m\in W^{2,p}_{\rm loc}(\Rd)$ such that $u_{n,k}$ converges to $\bar{u}_k$ ($k=1, \ldots, m$) strongly in $W^{1,p}(B(0,r))$ and weakly in $W^{2,p}(B(0,r))$ for any $r>0$. Thus, we infer that $\bar{u}_k=u_k$, whence $u_k\in W^{2,p}_{\rm loc}(\Rd)$ for any $k=1, \ldots,m$ and $p\in [1,+\infty)$.

Since $\uu_n$ converges to $\uu$ weakly in $W^{2,p}_{\rm{loc}}(\Rd;\Rm)$, $\A\uu_n$ converges to $\A\uu$ weakly in $L^p_{\rm loc}(\Rd;\R^m)$. On the other hand formula  \eqref{aun} implies that $\A\uu_n$ converges to $\lambda \uu-\f$ locally uniformly in $\Rd$ as $n$ tends to $+\infty$. As a byproduct, we conclude that $\A\uu=\lambda \uu-\f\in C_b(\Rd;\R^m)$, i.e., $\A\uu=\boldsymbol A \uu \in C_b(\Rd;\R^m)$. This yields that $\boldsymbol D\subset D_{\rm max}(\A)$ and $\A\uu=\boldsymbol A \uu$ for any $\uu\in \boldsymbol D$.

From the definition of $\boldsymbol D$ and the inclusion $\boldsymbol D\subset D_{\rm max}(\A)$, it follows that $\boldsymbol D= D_{\rm max}(\A)$ if and only if $\lambda-\boldsymbol A$ is injective on $D_{\rm max}(\A)$. To prove the injectivity of $\lambda-\boldsymbol A$ on $D_{\rm max}(\A)$ we show that $\uu=0$ is the unique solution to the equation $\lambda \uu-\A\uu=0$ in $D_{\rm max}(\A)$. To this aim, let $\uu\in D_{\rm max}(\A)$ solve such an equation. A straightforward computation reveals that $|\uu|^2$
\begin{equation}\label{biagio}
\lambda |\uu|^2-\frac{1}{2}{\mathcal A}|\uu|^2= -\sum_{i,j=1}^d\sum_{k=1}^m q_{ij}D_i u_kD_j u_k+ \sum_{i=1}^d\sum_{j,k=1}^m (B_i)_{kj}u_kD_iu_j.
\end{equation}
Using Hypotheses \ref{hyp_base}(i)-(ii) we can estimate the term in the right-hand side of \eqref{biagio} as follows
\begin{align*}
-\sum_{i,j=1}^d\sum_{k=1}^m q_{ij}D_i u_kD_j u_k+ \sum_{i=1}^d\sum_{j,k=1}^m (B_i)_{kj}u_kD_iu_j &\le -\lambda_Q |J_x\uu|^2+m\sqrt{d}\psi|\uu||J_x \uu|\\
&\le (\varepsilon \psi^2-\lambda_Q) |J_x \uu|^2+\frac{m^2d}{4\varepsilon}|\uu|^2\\
& \le \lambda_Q(\varepsilon \xi^2-1)|J_x \uu|^2+\frac{m^2d}{4\varepsilon}|\uu|^2.
\end{align*}
Choosing $\varepsilon=\xi^{-2}$ we obtain that $\lambda |\uu|^2-\hat{\mathcal A}_0|\uu|^2\le 0 $ where $\hat{\mathcal A}_0=\frac{1}{2}{\mathcal A}+\frac{m^2d\xi^2}{4}$. Hypothesis \ref{hyp_base}(iii) and the maximum principle in \cite[Theorem 3.1.6]{newbook} yield that $\uu=0$.

To complete the proof, let us fix $\f\in D_{\rm max}(\A)$ and $t>0$. Estimate \eqref{pointwise} and the semigroup law show that
\begin{align}\label{solesole}
\bigg|\frac{\T(h)-I}{h}\T(t)\f-\T(t)\A\f\bigg|^2&= \bigg|\T(t)\bigg (\frac{\T(h)-I}{h}\f-\A\f\bigg )\bigg|^2\notag\\
&\le e^{2\beta t}T(t)\bigg(\bigg|\frac{\T(h)-I}{h}\f-\A\f\bigg|^2\bigg).
\end{align}
Since $\f\in D_{\rm max}(\A)=\boldsymbol D$ it holds that $\sup_{h\in (0,1)}\|(\T(h)\f-\f)h^{-1}\|_{\infty}<+\infty$ and $(\T(h)\f-\f)h^{-1}$
converges pointwise to $\A\f$ as $h$ tends to $0^+$. Thanks to Theorem \ref{maremaremare}, the right-hand side of
\eqref{solesole} vanishes locally uniformly as $h$ tends to $0^+$.
Thus, $(\T(h)\T(t)\f-\T(t)\f)h^{-1}$ converges to $\T(t)\A\f$ locally uniformly as $h$ tends to $0^+$. Moreover, the semigroup law, estimate \eqref{rettore} and the fact that
$\sup_{h\in (0,1)}\|(\T(h)\f-\f)h^{-1}\|_\infty<+\infty$ imply that $\sup_{h\in (0,1)}\|(\T(h)\T(t)\f-\T(t)\f)h^{-1}\|_\infty<+\infty$;
whence we deduce that $\T(t)\f\in \boldsymbol D=D_{\rm max}(\A)$ and $\A\T(t)\f=\T(t)\A\f$.

To show formula \eqref{fantine}, we fix $x\in\Rd$, $\f\in D_{\rm max}(\A)$ and observe that estimate \eqref{solesole} implies that the right-derivative of the function $(\T(\cdot)\f)(x)$ exists in $[0,+\infty)$ and coincide with the function $(\T(\cdot)\A\f)(x)$. Since this function is continuous in $[0,+\infty)$, formula \eqref{fantine} follows at once.
\end{proof}

\subsection{Pointwise gradient estimates}
\label{subsect-2.3}
In this subsection we provide sufficient conditions which ensure pointwise gradient estimates for the
vector-valued semigroup $\T(t)$. As a by product, under additional assumptions we show
that the function $\T(\cdot)\f$ is uniformly bounded in $[0,+\infty)\times B(0,r)$ for any $\f \in C_b(\Rd;\Rm)$ and $r>0$.

\begin{prop}
\label{premier}
Under Hypotheses $\ref{hyp_base}(i)$-$(iv)$, for any $p \ge p_0$ and any $\f\in C^1_b(\Rd;\Rm)$ it holds that
\begin{align}
|(J_x\T(t)\f)(x)|^p\leq e^{p\sigma_{p_0} t}(T(t)|J\f|^p)(x), \qquad\;\, t>0,\;\,x\in\Rd,
\label{capitanamerica}
\end{align}
where $\sigma_{p_0}$ is defined in Hypothesis $\ref{hyp_base}(iv)$.
\end{prop}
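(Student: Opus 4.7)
The plan is a Bernstein-type argument carried out on the scalar function $|J_x\uu|^p$ where $\uu:=\T(\cdot)\f$. By Theorem \ref{point_prop}, $\uu\in C^{1+\alpha/2,2+\alpha}_{\rm loc}((0,+\infty)\times\Rd;\Rm)$, so for $\varepsilon>0$ the regularised functions $v_\varepsilon:=|J\uu|^2+\varepsilon$ and $w_\varepsilon:=v_\varepsilon^{p/2}$ are classical. The aim is to prove the pointwise parabolic inequality
\begin{equation*}
D_tw_\varepsilon-\mathcal{A}w_\varepsilon\le p\sigma_{p_0}\,w_\varepsilon\qquad\text{in }(0,+\infty)\times\Rd,
\end{equation*}
and then compare $w_\varepsilon$ with $e^{p\sigma_{p_0}t}T(t)g_\varepsilon$, where $g_\varepsilon:=(|J\f|^2+\varepsilon)^{p/2}\in C_b(\Rd)$, via the scalar semigroup $T(t)$ generated by $\mathcal{A}$. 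Sending $\varepsilon\to 0^+$ (using $g_\varepsilon\downarrow|J\f|^p$ and the measure representation \eqref{valigia} of $T(t)$) will then yield \eqref{capitanamerica}.

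The core computation is as follows. Differentiating componentwise the system $D_t\uu=\A\uu$ in $x_\ell$, multiplying by $D_\ell u_j$, and summing over $j,\ell$, one finds
\begin{equation*}
\tfrac{1}{2}(D_tv_\varepsilon-\mathcal{A}v_\varepsilon)
= -\sum_{j,\ell,h,k}q_{hk}D_hD_\ell u_j\,D_kD_\ell u_j + T_1+T_2+T_3+T_4,
\end{equation*}
with $T_1=\sum_{j,\ell,h,k}(D_\ell q_{hk})D_\ell u_j D_{hk}u_j$, $T_2=\sum_{j,\ell,k}(D_\ell b_k)D_\ell u_j D_k u_j$, $T_3=\sum_{j,\ell,k,i}(B_k)_{ji}D_\ell u_j D_kD_\ell u_i$, and $T_4=\sum_{j,\ell,k,i}(D_\ell(B_k)_{ji})D_\ell u_j D_ku_i$. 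One has $T_2\le\Lambda_{J_x{\bf b}}|J\uu|^2$ and $|T_4|\le(\sum_{i,j}|D_jB_i|^2)^{1/2}|J\uu|^2$ by definition of $\Lambda_{J_x{\bf b}}$ and Cauchy--Schwarz, while $|T_1|\le d^{3/2}k(x)|J\uu||D^2\uu|$ and $|T_3|\le m\sqrt{d}\,\psi|J\uu||D^2\uu|$. The chain rule gives
\begin{equation*}
D_tw_\varepsilon-\mathcal{A}w_\varepsilon
=\tfrac{p}{2}v_\varepsilon^{p/2-1}(D_tv_\varepsilon-\mathcal{A}v_\varepsilon)
-\tfrac{p(p-2)}{4}v_\varepsilon^{p/2-2}\langle Q\nabla v_\varepsilon,\nabla v_\varepsilon\rangle.
\end{equation*}
For $p\ge 2$ the correction is non-positive. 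For $p_0\le p<2$, I would use the identity $\nabla v_\varepsilon=2\sum_{j,\ell}(D_\ell u_j)\nabla(D_\ell u_j)$ together with Cauchy--Schwarz for the semidefinite form $\langle Q\cdot,\cdot\rangle$ to obtain $\langle Q\nabla v_\varepsilon,\nabla v_\varepsilon\rangle\le 4v_\varepsilon\sum_{j,\ell,h,k}q_{hk}D_hD_\ell u_jD_kD_\ell u_j$. In both cases the effective coercivity of the Hessian term becomes $p\min\{1,p-1\}\ge p\min\{1,p_0-1\}$ times $\sum q_{hk}D_hD_\ell u_jD_kD_\ell u_j\ge\lambda_Q|D^2\uu|^2$. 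A single application of Young's inequality to $pv_\varepsilon^{p/2-1}(|T_1|+|T_3|)\le pv_\varepsilon^{p/2-1}\sqrt{d}(m\psi+dk)|J\uu||D^2\uu|$ absorbs these mixed terms into the surviving coercive term, and, after replacing $\psi$ by $\xi\sqrt{\lambda_Q}$ via Hypothesis \ref{hyp_base}(ii), leaves on $v_\varepsilon^{p/2-1}|J\uu|^2\le w_\varepsilon$ exactly the coefficient $\frac{pd(m\xi+dk\lambda_Q^{-1/2})^2}{4\min\{1,p_0-1\}}$. Summing with the contribution $p(\Lambda_{J_x{\bf b}}+(\sum|D_jB_i|^2)^{1/2})w_\varepsilon$ from $T_2$ and $T_4$ reproduces exactly $p\sigma_{p_0}w_\varepsilon$ by the very definition of $\sigma_{p_0}$ in Hypothesis \ref{hyp_base}(iv).

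To conclude, I would set $z_\varepsilon(t,x):=w_\varepsilon(t,x)-e^{p\sigma_{p_0}t}(T(t)g_\varepsilon)(x)$; then $z_\varepsilon(0,\cdot)\equiv 0$ and $D_tz_\varepsilon-\mathcal{A}z_\varepsilon\le p\sigma_{p_0}z_\varepsilon$ pointwise. The scalar comparison principle for $\mathcal{A}$ (available thanks to the Lyapunov function $\varphi$ from Hypothesis \ref{hyp_base}(iii); cf. \cite[Theorem 3.1.6]{newbook} and its parabolic analogue) gives $z_\varepsilon\le 0$; to handle the possible unboundedness of $w_\varepsilon$ at infinity, one first proves the estimate for $\f\in C^{1+\alpha}_c(\Rd;\Rm)$, whose iterates $\T(t)\f$ have all the decay and regularity needed to apply the maximum principle, and then passes to general $\f\in C^1_b$ by approximation and the continuity of $T(t)$ and $\T(t)$ with respect to locally uniform convergence (Theorem \ref{maremaremare}). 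The main obstacle is the calibrated Young absorption in the preceding paragraph: each pointwise constant produced by $T_1$ and $T_3$ must, combined with the diminished coercivity $\min\{1,p_0-1\}$ created by the chain-rule correction, reconstruct precisely the constant featuring in Hypothesis \ref{hyp_base}(iv), including the outer factor $d$ that emerges from grouping $m\psi/\sqrt{\lambda_Q}$ and $dk/\sqrt{\lambda_Q}$ into a single squared sum.
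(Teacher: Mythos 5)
Your Bernstein computation is essentially the paper's own: the decomposition into the Hessian-coercive term, the $J_x{\bf b}$ and $D_jB_i$ contributions, the two mixed terms coming from $D_\ell q_{hk}$ and from $B_k$, and (for $p<2$) the Cauchy--Schwarz bound $\langle Q\nabla v_\varepsilon,\nabla v_\varepsilon\rangle\le 4v_\varepsilon\sum q_{hk}D_hD_\ell u_jD_kD_\ell u_j$ all coincide with the quantities $\psi_1,\psi_2,\psi_3$ in the paper's proof, and your calibrated Young absorption does reconstruct $p\sigma_{p_0}$ (with $\tfrac{1}{4\min\{1,p-1\}}\le\tfrac{1}{4\min\{1,p_0-1\}}$, so the inequality goes the right way).

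The gap is in the comparison step. You apply the maximum principle directly on $(0,+\infty)\times\Rd$, which requires an a priori control of $w_\varepsilon=(|J_x\T(t)\f|^2+\varepsilon)^{p/2}$ at spatial infinity (boundedness on strips $[0,T]\times\Rd$, or growth dominated by the Lyapunov function $\varphi$). No such control is available at this stage: Theorem \ref{point_prop} gives only $C^{1+\alpha/2,2+\alpha}_{\rm loc}$ regularity of $\T(\cdot)\f$, and a global bound on $|J_x\T(t)\f|$ is precisely what the proposition is meant to produce, so the argument as written is circular. Your fallback --- first treating $\f\in C^{1+\alpha}_c(\Rd;\Rm)$, whose solutions allegedly ``have all the decay needed'' --- is an unproved assertion: with unbounded drift and coupling there is no reason the gradient of the solution with compactly supported data remains bounded, let alone decays, on $\Rd$. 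The paper sidesteps this entirely by running the identical Bernstein computation on the Neumann approximations $\uu_n$ on $B(0,n)$, where the boundary inequality $\partial v_n/\partial\nu\le 0$ on $\partial B(0,n)$ makes the classical maximum principle on a bounded domain applicable, and then letting $n\to+\infty$ using the known local $C^{1,2}$ convergence of $\uu_n$ to $\T(\cdot)\f$. To repair your proof you must either adopt that device or supply an independent a priori bound of the form $|J_x\T(t)\f|\le C(t)\varphi^{\theta}$. A secondary point: in your final approximation from compactly supported to general $\f\in C^1_b(\Rd;\Rm)$, locally uniform convergence of $\T(t)\f_n$ does not let you pass to the limit in the left-hand side of \eqref{capitanamerica}; you need convergence of the Jacobians on compact sets, which requires invoking the interior Schauder estimates of Theorem \ref{thm-A2}.
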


\begin{proof}
Let $\f$ and $p$ be as in the statement. We claim that
\begin{align}
\label{muratori}
|J_x\uu_n(t,x)|^p\leq e^{p\sigma_{p_0} t}(T_n^{\mathcal N}(t)|J \f|^p)(x), \qquad\;\,t>0,\;\,x\in B(0,n),
\end{align}
where $\uu_n$ is the unique classical solution to the homogeneous Neumann-Cauchy problem associated to the equation $D_t\uu=\A\uu$ in $B(0,n)$
and $T_n^{\mathcal N}(t)$ denotes the semigroup associated to the realization of the operator $\mathcal{A}$ in $C_b(\overline{B(0,n)})$ with homogeneous Neumann boundary conditions.
Once \eqref{muratori} is proved, \eqref{capitanamerica} will follow simply observing that $\uu_n$, converges to $\T(\cdot)\f$ in $C^{1,2}(K)$ for any compact set $K\subset (0,+\infty)\times \Rd$ (see \cite[Remark 2.10]{AALT}).

So, let us prove \eqref{muratori}. Fix $\varepsilon>0$ and for any $n\in\N$ set $v_n:=(|J_x\uu_n|^2+\varepsilon_p)^{p/2}$,
where $\varepsilon_p=\varepsilon>0$ if $p\in (p_0,2)$ and $p_0<2$ and $\varepsilon_p=0$ otherwise (i.e., $p\ge 2$).
By classical results, $v_n$ belongs to $C^{1,2}((0,T)\times B(0,n))\cap C_b([0,T]\times\overline{B(0,n)})$ for any $T>0$ and
\begin{align*}
\left\{
\begin{array}{ll}
\displaystyle D_tv_n={\mathcal A}v_n+pv^{1-\frac{2}{p}}_n(\psi_1+\psi_2)+pv_n^{1-\frac{4}{p}}\psi_3, \quad & {\rm in}~(0,+\infty)\times B(0,n), \vspace{1mm}\\
\displaystyle\frac{\partial v_n}{\partial \nu}(t,x)\le 0, & {\rm in}~(0,+\infty)\times\partial B(0,n),  \vspace{1mm} \\
v_n(0,\cdot)=(|J\f|^2+\varepsilon_p)^{\frac{p}{2}}, & {\rm in}~B(0,n),
\end{array}
\right.
\end{align*}
where
\begin{align*}
\psi_1 =& \sum_{k=1}^m\langle (J_x{\bf b})\nabla_xu_{n,k},\nabla_xu_{n,k}\rangle
-\sum_{i=1}^d\sum_{k=1}^m|\sqrt{Q}\nabla_xD_iu_{n,k}|^2,\\[2mm]
\psi_2 =& \sum_{i,j=1}^d\langle D_iB_j D_j\uu_n,D_i\uu_n\rangle\!+\!\sum_{i,j=1}^d\langle B_jD_{ij}\uu_n,D_i\uu_n\rangle
\!+\!\sum_{i,j,h=1}^dD_hq_{ij}\langle D_{ij}\uu_n,D_h\uu_n\rangle,\\[2mm]
\psi_3=&(2-p)\sum_{k=1}^m|\sqrt{Q}D^2_xu_{n,k}\nabla_xu_{n,k}|^2.
\end{align*}

We can estimate $\psi_1$ and $\psi_2$ as follows:
\begin{align}\label{stima1}
\psi_1\leq &\Lambda_{Jb}|J_x\uu_n|^2-\sum_{i=1}^d\sum_{k=1}^m|\sqrt{Q}\nabla_xD_iu_{n,k}|^2\nonumber\\
\le &\Lambda_{Jb}|J_x\uu_n|^2-\lambda_Q|D^2_x\uu_n|^2
\end{align}
and
\begin{align}\label{stima2}
|\psi_2| \leq & \bigg (\sum_{i,j=1}^d|D_jB_i|^2\bigg )^{\frac{1}{2}}|J_x\uu_n|^2
+(\sqrt{d}m\xi\sqrt{\lambda_Q}+d^{\frac{3}{2}}k)|J_x\uu_n||D^2_x\uu_n|\nonumber\\
\le & \bigg [\bigg (\sum_{i,j=1}^d|D_jB_i|^2\bigg )^{\frac{1}{2}}+\frac{1}{4a}(\sqrt{d}m\xi+d^{\frac{3}{2}}k\lambda_Q^{-\frac{1}{2}})^2\bigg ]|J_x\uu_n|^2
+a\lambda_Q|D_x^2\uu_n|^2
\end{align}
for any $a>0$. Note that if $p\ge 2$ then $\psi_3\le 0$. In this case, the second part of estimate \eqref{stima1} and estimate \eqref{stima2} with $a=1$ yield that $D_t v_n-\mathcal{A}v_n\le p\sigma_{p_0}v_n$. Otherwise if $p_0<2$ and $p\in (p_0,2)$ then the function $\psi_3$ is nonnegative and can be estimated as follows:
\begin{align}
(2-p)^{-1}\psi_3\le & \sum_{i,j=1}^d\sum_{k=1}^m|\sqrt{Q}\nabla_x D_iu_{n,k}||\sqrt{Q}\nabla_x D_ju_{n,k}||D_iu_{n,k}||D_ju_{n,k}|\notag\\
= & \sum_{k=1}^m\bigg (\sum_{i=1}^d|\sqrt{Q}\nabla_x D_iu_{n,k}||D_iu_{n,k}|\bigg )^2\notag\\
\le & \sum_{k=1}^m\sum_{i=1}^d|\sqrt{Q}\nabla_x D_iu_{n,k}|^2|\nabla_xu_{n,k}|^2\notag\\
\le & |J_x\uu_n|^2\sum_{k=1}^m\sum_{i=1}^d|\sqrt{Q}\nabla_x D_iu_{n,k}|^2.
\label{star-1}
\end{align}
Summing up, from (the first part of) \eqref{stima1}, \eqref{stima2} and \eqref{star-1}, we obtain that
\begin{align*}
D_tv_n-\mathcal Av_n \le &  pv_n^{1-\frac{2}{p}}|J_x\uu_n|^2\bigg [\Lambda_{Jb}+\bigg (\sum_{i,j=1}^d|D_jB_i|^2\bigg )^{\frac{1}{2}}+\frac{1}{4a}(\sqrt{d}m\xi+d^{\frac{3}{2}}k\lambda_Q^{-\frac{1}{2}})^2\bigg ]\\
&+p(1-p)v_n^{1-\frac{2}{p}}\sum_{k=1}^m\sum_{i=1}^d\langle Q\nabla D_iu_{n,k}, \nabla D_iu_{n,k}\rangle+
apv_n^{1-\frac{2}{p}}\lambda_Q|D^2_x\uu_n|^2\\
\le &  pv_n^{1-\frac{2}{p}}|J_x\uu_n|^2\bigg [\Lambda_{Jb}+\bigg (\sum_{i,j=1}^d|D_jB_i|^2\bigg )^{\frac{1}{2}}+\frac{1}{4a}(\sqrt{d}m\xi+d^{\frac{3}{2}}k\lambda_Q^{-\frac{1}{2}})^2\bigg ]\\
&+pv_n^{1-\frac{2}{p}}[(1-p+a)\lambda_Q|D^2_x\uu_n|^2.
\end{align*}
Thus, choosing $a=p-1$, the coefficient in front of $|D^2_x\uu_n|^2$ vanishes and the estimate becomes
$D_tv_n-\mathcal A v_n\leq p{\sigma_{p_0}}v_n-p\sigma_{p_0}\varepsilon_{p_0} v_n^{1-2/p}\le p\sigma_{p_0}v_n-p\sigma_{p_0}\varepsilon_p^{p/2}$.

Now, the procedure is the same in the two cases considered: we set $w_n(t,\cdot):=v_n(t,\cdot)-\varepsilon_p^{p/2}-e^{p\sigma_{p_0}t}T_n^{\mathcal N}(t)((|\nabla \f|^2+\varepsilon_p)^{p/2})$ for any $t\ge 0$
and observe that
\begin{align*}
\left\{
\begin{array}{ll}
\displaystyle D_tw_n-(\mathcal A+p\sigma_{p_0}) w_n\leq 0,\quad & {\rm in}~(0,+\infty)\times B(0,n), \\[1mm]
\displaystyle\frac{\partial w_n}{\partial \nu}\le 0, & {\rm in}~(0,+\infty)\times\partial B(0,n),\\[1mm]
w_n(0,\cdot)=-\varepsilon_p^{\frac{p}{2}}, & {\rm in}~B(0,n).
\end{array}
\right.
\end{align*}
The classical maximum principle yields that $w_n\le 0$ in $(0,+\infty)\times B(0,n)$, i.e.,
$v_n(t,\cdot)\le e^{p\sigma_{p_0}t}T_n^{\mathcal N}(t)((|\nabla \f|^2+\varepsilon_p)^{p/2})+\varepsilon_p^{p/2}$ for any $t>0$ and this concludes the proof if $p\ge 2$. Otherwise, we let $\varepsilon_p$ tend to $0^+$ and again we conclude the proof.
\end{proof}

\begin{prop}
\label{bundes}
Under Hypotheses $\ref{hyp_base}(i)$-$(iv)$, for any $p \ge p_0$ there exists a positive constant $C_p$ such that
\begin{align}
|(J_x\T(t)\f)(x)|^p\leq C_pe^{p\sigma_{p_0}t}(1\vee t^{-\frac{p}{2}})(T(t)|\f|^p)(x), \qquad\;\, t>0,\;\, x\in\Rd,
\label{capitanfuturo}
\end{align}
for any $\f \in C_b(\Rd;\Rm)$.
\end{prop}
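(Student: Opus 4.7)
The plan is to handle the small- and large-time regimes separately and then patch them together, reducing both to the gradient estimate of Proposition \ref{premier}.

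The crux of the argument is the small-time estimate
\begin{eqnarray*}
|(J_x\T(t)\f)(x)|^p\le C_p\,t^{-p/2}(T(t)|\f|^p)(x),\qquad\;\,t\in (0,1],\;\,x\in\Rd,
\end{eqnarray*}
which I would prove by a Bernstein-type argument on the Neumann approximations. Let $\uu_n$ be the classical solution of $D_t\uu=\A\uu$ on $B(0,n)$ with homogeneous Neumann boundary conditions and initial datum $\f$, so that $\uu_n\to \T(\cdot)\f$ in $C^{1,2}_{\rm loc}((0,+\infty)\times\R^d;\R^m)$ by \cite[Remark 2.10]{AALT}. Fix $\varepsilon>0$, set $g:=|\uu_n|^2+\varepsilon$, $v_n:=(|J_x\uu_n|^2+\varepsilon)^{p/2}$ and $w_n:=g^{p/2}$. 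The proof of Proposition \ref{premier} already gives $(D_t-{\mathcal A})v_n\le p\sigma_{p_0}v_n$ and $\partial v_n/\partial\nu\le 0$ on $\partial B(0,n)$. A parallel chain-rule computation, based on Hypotheses \ref{hyp_base}(i)-(ii) and the identity $\nabla g=2\sum_k u_{n,k}\nabla u_{n,k}$, yields a companion inequality of the form $(D_t-{\mathcal A})w_n\le K_p w_n-L_p g^{p/2-1}|J_x\uu_n|^2$ for suitable $K_p,L_p>0$ (independent of $n$ and $\varepsilon$), together with $\partial w_n/\partial\nu=0$ on $\partial B(0,n)$. Introducing
\begin{eqnarray*}
\Phi_n(t,x):=t^{p/2}v_n(t,x)+\lambda\,e^{-K_pt}w_n(t,x)
\end{eqnarray*}
with $\lambda>0$ large, the bad term $\tfrac{p}{2}t^{p/2-1}v_n$ produced by the time derivative is absorbed by the $-L_p g^{p/2-1}|J_x\uu_n|^2$ coming from the companion inequality, so that $(D_t-\mathcal A)\Phi_n\le \omega(\varepsilon)$ in $(0,1]\times B(0,n)$ with $\omega(\varepsilon)\to 0^+$ as $\varepsilon\to 0^+$. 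The parabolic maximum principle, combined with $\partial\Phi_n/\partial\nu\le 0$ and $\Phi_n(0,\cdot)=\lambda(|\f|^2+\varepsilon)^{p/2}$, then compares $\Phi_n$ with $\lambda e^{-K_pt}T_n^{\mathcal N}(t)[(|\f|^2+\varepsilon)^{p/2}]$; letting $\varepsilon\to 0^+$ and $n\to +\infty$ yields the small-time estimate.

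For $t>1$ I would exploit the semigroup law $\T(t)\f=\T(t-1/2)[\T(1/2)\f]$. By parabolic regularity (and \eqref{rettore}) the function $\T(1/2)\f$ lies in $C_b^1(\Rd;\Rm)$, and the small-time estimate applied at $t=1/2$ gives the pointwise bound $|J_x\T(1/2)\f|^p\le 2^{p/2}C_p\,T(1/2)|\f|^p$. Proposition \ref{premier} then yields
\begin{eqnarray*}
|(J_x\T(t)\f)(x)|^p\le e^{p\sigma_{p_0}(t-1/2)}(T(t-1/2)|J_x\T(1/2)\f|^p)(x),
\end{eqnarray*}
and plugging in the previous bound and using the scalar semigroup identity $T(t-1/2)T(1/2)=T(t)$ gives $|J_x\T(t)\f|^p\le C'_p\,e^{p\sigma_{p_0}t}T(t)|\f|^p$ for $t>1$. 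Since $e^{p\sigma_{p_0}t}\ge e^{p\sigma_{p_0}}$ for $t\le 1$ (as $\sigma_{p_0}\le 0$), the small-time bound can be rewritten as $|J_x\T(t)\f|^p\le C_p\,e^{-p\sigma_{p_0}}e^{p\sigma_{p_0}t}t^{-p/2}T(t)|\f|^p$, and absorbing the constants into a single $C_p$ completes the proof.

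The main obstacle is securing $L_p>0$ in the companion inequality uniformly in $p\ge p_0$. For $p\ge 2$ the extra cross term $-\tfrac{p(p-2)}{4}g^{p/2-2}\sum q_{ij}D_igD_jg$ produced by the chain rule has the favourable sign and the argument is clean. For $p\in [p_0,2)$ this cross term contributes positively and must be controlled via the Cauchy--Schwarz bound $\sum q_{ij}D_igD_jg\le 4gR$ with $R:=\sum q_{ij}\sum_k D_iu_{n,k}D_ju_{n,k}$; the resulting competition of coefficients yields $L_p>0$ directly for $p\ge 3/2$, while for $p\in (p_0,3/2)$ one may need either a finer choice of the auxiliary function (twisting $w_n$ by a power of $g$) or a bootstrap from the $p=2$ Bernstein estimate combined with Jensen's inequality (which works in the required direction for $p\ge 2$) with respect to the scalar semigroup's probability measure $p(t,x,dy)$.
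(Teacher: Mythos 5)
Your large-time reduction via the semigroup law and Proposition \ref{premier} is exactly the paper's argument, and the overall strategy (a Bernstein-type bound on the Neumann approximations for $t\le 1$, then propagation) is the right one. The gap is in the small-time step, and it is not where you locate it. The companion inequality itself is unproblematic: carrying out the chain rule for $w_n=g^{p/2}$ and using your bound $\langle Q\nabla g,\nabla g\rangle\le 4gR$, the net coefficient of $g^{p/2-1}R$ is $-p(p-1-\delta)$, where $\delta\in(0,p-1)$ is the share of $R$ spent on the drift-coupling cross term; hence $L_p>0$ for every $p>1$, not just $p\ge 3/2$. What actually fails is the absorption of $\tfrac{p}{2}t^{p/2-1}v_n$ by $-\lambda e^{-K_pt}L_p\,g^{p/2-1}|J_x\uu_n|^2$: the first quantity is comparable to $t^{p/2-1}|J_x\uu_n|^p$, the second to $(|\uu_n|^2+\varepsilon)^{p/2-1}|J_x\uu_n|^2$, and these have different homogeneities in $(|\uu_n|,|J_x\uu_n|)$ unless $p=2$. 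A pointwise absorption would require $t|J_x\uu_n|^2\gtrsim |\uu_n|^2+\varepsilon$, which is false wherever $J_x\uu_n$ vanishes but $\uu_n$ does not (already for the heat semigroup). So $\Phi_n$ is not a subsolution of $D_t-\mathcal A-K_p$ for $p\ne 2$ and the maximum principle cannot be invoked. Your Jensen fallback does repair the case $p\ge 2$ (convexity of $s\mapsto s^{p/2}$ against the probability measures $p(t,x,dy)$, together with $\sigma_2\le\sigma_{p_0}$), but it goes the wrong way for $p\in[p_0,2)$, which is precisely the range the paper needs: estimate \eqref{capitanfuturo} is applied with $p=p_0$ in the proofs of Theorems \ref{parodontax} and \ref{capitano}.

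The paper's fix is structural rather than a ``finer choice of $w_n$'': it uses the single auxiliary function $v_n=(|\uu_n|^2+\gamma t|J_x\uu_n|^2+\varepsilon_p)^{p/2}$. Differentiating the factor $\gamma t$ produces $\tfrac{p}{2}\gamma\, v_n^{1-2/p}|J_x\uu_n|^2$, which carries the \emph{same} prefactor $v_n^{1-2/p}$ as the good term $-v_n^{1-2/p}\sum_k|\sqrt{Q}\nabla_x u_{n,k}|^2\le -v_n^{1-2/p}\lambda_Q|J_x\uu_n|^2$ coming from $\mathcal A|\uu_n|^2$; the absorption is then exact for a suitable $\gamma$ (of order $\lambda_0$, resp. $(p-1)\lambda_0^{-1}$ when $p<2$), uniformly for all $p\ge p_0$, with the extra term $\widetilde\psi_4$ in the case $p<2$ handled by the same Cauchy--Schwarz device you propose. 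Any additive ansatz would have to match these homogeneities, which essentially forces the combined function.
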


\begin{proof}
Here, we take advantage of the notation and the results in the proof of Proposition \ref{premier}.
We actually reduce ourselves to proving that for any $p\ge p_0$ there exists a positive constant $k_p$ such that
\begin{align}
|J_x\uu_n(t,x)|^p\leq k_pt^{-\frac{p}{2}}(T_n^{\mathcal N}(t)|\f|^p)(x), \qquad\;\,t\in (0,1],\;\,x\in B(0,n),
\label{monomi}
\end{align}
for any $n\in\N$ and $\f\in C_b(\Rd;\Rm)$.
Once \eqref{monomi} is proved, letting $n$ tend to $+\infty$ we obtain
\begin{align}
|(J_x\T(t)\f)(x)|^p\leq k_pt^{-\frac{p}{2}}(T(t)|\f|^p)(x), \qquad\;\,t\in (0,1],\;\,x\in B(0,n).
\label{monomi-ii}
\end{align}

Finally, estimate \eqref{capitanfuturo} will follow using the semigroup rule, estimates \eqref{capitanamerica}, \eqref{monomi-ii} and the positivity of the scalar semigroup $T(t)$.
Indeed, if $t>1$ then
\begin{align*}
|(J_x\T(t)\f)(x)|^p=&|(J_x\T(t-1)\T(1)\f)(x)|^p\le e^{p\sigma_{p_0}(t-1)}(T(t-1)|J_x\T(1)\f|^p)(x)\\
\le &k_pe^{p\sigma_{p_0}(t-1)}(T(t)|\f|^p)(x)
\end{align*}
for any $x\in\Rd$, and \eqref{capitanfuturo} follows with $C_p=k_pe^{p|\sigma_{p_0}|}$.

So, let us prove estimate \eqref{monomi}. First, we set
\begin{eqnarray*}
v_n(t,x):=(|\uu_n(t,x)|^2+\gamma t|J_x\uu_n(t,x)|^2+\varepsilon_p)^{\frac{p}{2}}
\end{eqnarray*}
for any $t\in (0,1]$, $x\in \Rd$ and $n \in \N$, where $\varepsilon_p$ is as in the proof of Proposition \ref{premier} and $\gamma$
is a positive constant which will be fixed later.
For any $n \in \N$, the function $v_n$ belongs to $C^{1,2}((0,+\infty)\times\overline{B(0,n)})\cap C([0,+\infty)\times\overline{B(0,n)})$,
is bounded in each strip $[0,T]\times\overline{B(0,n)}$ and
\begin{align*}
\left\{
\begin{array}{ll}
\displaystyle D_tv_n=\mathcal A v_n+pv_n^{1-\frac{2}{p}}(\widetilde\psi_1+\widetilde\psi_2+\widetilde\psi_3)+pv_n^{1-\frac{4}{p}}\widetilde\psi_4,\quad & {\rm in}~ (0,1]\times B(0,n),\\[1mm]
\displaystyle\frac{\partial v_n}{\partial \nu}(t,x)\le 0, & {\rm in}~(0,1]\times\partial B(0,n), \\[1mm]
v_n(0,\cdot)=(|\f|^2+\varepsilon_p)^{\frac{p}{2}}, & {\rm in}~ B(0,n),
\end{array}
\right.
\end{align*}
with
\begin{align*}
\widetilde\psi_1(t,\cdot) =& -\sum_{k=1}^m|\sqrt{Q}\nabla_xu_{n,k}(t,\cdot)|^2+\gamma t\psi_1(t,\cdot),\\
\widetilde\psi_2(t,\cdot) =&\gamma t\psi_2(t,\cdot),\vspace{2mm}\\
\widetilde\psi_3(t,\cdot) =& \frac\gamma2|J_x\uu_n(t,\cdot)|^2+\sum_{j=1}^d\langle \uu_n(t,\cdot), B_jD_j\uu_n(t,\cdot)\rangle,\\[2mm]
\widetilde\psi_4(t,\cdot)=&(2-p)\sum_{h,k=1}^m\langle Q\zeta_h(t,\cdot),\zeta_k(t,\cdot)\rangle,
\end{align*}
where $\zeta_j=u_j\nabla_xu_{n,j}+\gamma tD^2_xu_{n,j}\nabla_xu_{n,j}$ for $j=1,\ldots,m$ and the functions $\psi_1$ and $\psi_2$ are defined in the proof of Proposition \ref{premier}.

Using Hypothesis \ref{hyp_base}(i)-(ii) and the Young inequality we estimate $\widetilde{\psi}_i$, $i=1,2,3$ in the following way:
\begin{align}
\widetilde \psi_1(t,\cdot)\le & \gamma t\Lambda_{Jb}|J_x\uu_n(t,\cdot)|^2\!-\!\sum_{k=1}^m|\sqrt{Q}\nabla_xu_{n,k}(t,\cdot)|^2\!
-\!\gamma t\sum_{i=1}^d\sum_{k=1}^m\!|\sqrt{Q}\nabla_xD_iu_{n,k}(t,\cdot)|^2\notag\\
:=&\gamma t\Lambda_{Jb}|J_x\uu_n(t,\cdot)|^2\!-\!{\mathcal I}_1(t,\cdot)-\gamma t{\mathcal I}_2(t,\cdot)\notag\\
\le& \gamma t \Lambda_{Jb}|J_x\uu_n(t,\cdot)|^2-\lambda_Q|J_x\uu_n(t,\cdot)|^2-\gamma t\lambda_Q|D^2_x\uu_n(t,\cdot)|^2,
\label{A}\\[2mm]
\widetilde\psi_2(t,\cdot)\le & \gamma t\bigg [\bigg (\sum_{i,j=1}^d|D_jB_i|^2\bigg )^{\frac{1}{2}}\!+\!\frac{(\sqrt{d}m\xi\!+\!d^{\frac{3}{2}}k\lambda_Q^{-\frac{1}{2}})^2}{4a}\bigg ]|J_x\uu_n(t,\cdot)|^2\notag\\
&+a\gamma t\lambda_Q|D_x^2\uu_n(t,\cdot)|^2,
\label{star-2}
\\[2mm]
\widetilde\psi_3(t,\cdot)\leq & \frac{\gamma}{2} |J_x\uu_n(t,\cdot)|^2+\bigg (\sum_{j=1}^d|B_j|^2\bigg )^{\frac{1}{2}}|\uu_n(t,\cdot)||J_x\uu_n(t,\cdot)|\notag\\
\le &\frac{\gamma}{2}|J_x\uu_n(t,\cdot)|^2+\sqrt{d}m\xi\lambda_Q^{\frac{1}{2}}|\uu_n(t,\cdot)||J_x\uu_n(t,\cdot)|\notag\\
\le &\frac{dm^2\xi^2}{4\varepsilon}|\uu_n(t,\cdot)|^2+\bigg (\frac{\gamma}{2}+\varepsilon\lambda_Q\bigg )|J_x\uu_n(t,\cdot)|^2
\label{star-3}
\end{align}
for any $t\in (0,1]$ and $a,\varepsilon>0$, where $\xi$ is defined in Hypothesis \ref{hyp_base}(ii).
We distinguish two cases.
If $p \ge p_0\vee 2$ then $\widetilde{\psi}_4\le 0$. Thus, using the previous estimates with $a=1$, $\varepsilon=2^{-1}$ and $\gamma=\lambda_0$
and Hypothesis \ref{hyp_base}(iv) we obtain immediately that $D_tv_n-\mathcal{A}v_n \le 2^{-1}pdm^2\xi^2v_n^{1-\frac{2}{p}}|\uu_n|^2\le h_p v_n$,
where $h_p=2^{-1}pdm^2\xi^2$. On the other hand, if $p_0<2$ and $p\in (p_0,2)$ then we need to estimate $\widetilde\psi_4$ too. We obtain
\begin{align}
\widetilde\psi_4(t,\cdot)
\le &\bigg (\sum_{h=1}^m|u_{n,h}(t,\cdot)||\sqrt{Q}\nabla u_{n,h}(t,\cdot)|\bigg )^2\notag\\
&+2\gamma t\sum_{h=1}^m|u_{n,h}(t,\cdot)||\sqrt{Q}\nabla u_{n,h}(t,\cdot)|\sum_{h=1}^m|\sqrt{Q}D^2_xu_{n,h}(t,\cdot)\nabla_xu_{n,h}(t,\cdot)|\notag\\
&+\gamma^2t^2\bigg (\sum_{h=1}^m|\sqrt{Q}D^2_xu_{n,h}(t,\cdot)\nabla_xu_{n,h}(t,\cdot)|\bigg )^2\notag\\
\le&\bigg [|\uu_n(t,\cdot)|\bigg (\sum_{h=1}^m|\sqrt{Q}\nabla_xu_{n,h}(t,\cdot)|^2\bigg )^{\frac{1}{2}}\!+\!\gamma t|J_x \uu_n| \bigg (\sum_{h=1}^m|\sqrt{Q}D^2_xu_h(t,\cdot)|^2\bigg )^{\frac{1}{2}}\bigg ]^2\notag\\
= & \big [|\uu_n(t,\cdot)|\sqrt{{\mathcal I}_1(t,\cdot)}+\gamma t|J_x\uu_n(t,\cdot)|\sqrt{{\mathcal I}_2(t,\cdot)}\big ]^2\notag\\
\le & (|\uu_n(t,\cdot)|^2+\gamma t|J_x\uu_n(t,\cdot)|^2)\big ({\mathcal I}_1(t,\cdot)+\gamma t{\mathcal I}_2(t,\cdot)\big )\notag\\
\le & (v_n(t,\cdot))^{\frac{2}{p}}\big ({\mathcal I}_1(t,\cdot)+\gamma t{\mathcal I}_2(t,\cdot)\big )
\label{B}
\end{align}
for any $t\in (0,1]$.
Thus, choosing $a= p-1$, $\gamma=(p-1)\lambda_0^{-1}$ and $\varepsilon=(p-1)/2$, from \eqref{A}-\eqref{B}  we obtain that
\begin{align*}
D_tv_n-\mathcal Av_n\leq
\frac{pdm^2\xi^2}{2(p-1)}|\uu_n(t,\cdot)|^2v_n^{1-\frac{2}{p}}\le \frac{pdm^2\xi^2}{2(p-1)}v_n=:\widetilde{h}_pv_n.
\end{align*}
Now, the procedure is the same in the two cases and, arguing as in the last part of Proposition \ref{premier}, we conclude that
\begin{eqnarray*}
 t^{\frac{p}{2}}|J_x\uu_n(t,\cdot)|^p\le k_pT_n^{\mathcal N}(t)((|\f|^2+\varepsilon_p)^{\frac{p}{2}})
\end{eqnarray*}
in $\Rd$, for any $t\in (0,1]$, where $k_p=2^{-1}(p\wedge 2-1)^{-1}pdm^2\xi^2$.
Thus, (letting $\varepsilon_p$ tend to $0^+$ if $p\in (p_0,2)$ and $p_0<2$),
we deduce \eqref{monomi} and the proof is so completed.
\end{proof}

\subsection{Further properties of the semigroup}
\label{subsect-2.4}
As we have already stressed, for each $\f\in C_b(\Rd;\Rm)$ and $t>0$, the function $\T(t)\f$ is bounded on $\Rd$ and estimate \eqref{pointwise} holds true.
For our purpose, we need to slightly improve Theorem \ref{point_prop}, showing the global in time and local in space boundedness of the function $\T(\cdot)\f$.

For notational convenience, for each $\sigma>0$ we denote by $B_{\sigma}(\Rd;\Rm)$ (resp. $C_{\sigma}(\Rd;\Rm)$) the set of all measurable (resp. continuous) vector-valued functions $\f:\Rd\to\Rm$ such that $\f\varphi^{-\sigma}$ is bounded in $\Rd$, where $\varphi$ is the Lyapunov function in Hypothesis \ref{hyp_base}(iii).
It is a Banach space when endowed with the norm $\|\f\|_{B_{\sigma}(\Rd;\Rm)}={\rm esssup}_{x\in\Rd}|(\varphi(x))^{-\sigma}\f(x)|$ (resp. $\|\f\|_{C_{\sigma}(\Rd;\Rm)}={\sup}_{x\in\Rd}|(\varphi(x))^{-\sigma}\f(x)|$).

\begin{thm}
\label{parodontax}
Let Hypotheses $\ref{hyp_base}$ hold true. Then,
\begin{enumerate}[\rm(i)]
\item
there exists a positive constant $C_0\geq1$ such that
\begin{align}
\label{giroitalia}
|(\T(t)\f)(x)|\leq C_0\|\f\|_{\infty}(\varphi(x))^{\frac{1}{\gamma}},\qquad\;\,t>0,\;\,x\in\Rd,
\end{align}
for any $\f\in B_b(\Rd;\Rm)$, where $\gamma$ is defined in Hypothesis $\ref{hyp_base}(v)$;
\item
for any $\sigma \in (0,1/2]$, $\T(t)$ can be extended to $C_{\sigma}(\Rd;\Rm)$ with a semigroup. More precisely,
there exists a positive constant $C_1\ge 1$ such that
\begin{equation}
\|\T(t)\f\|_{C_{\sigma}(\Rd;\Rm)}\le C_1e^{\beta t}\|\f\|_{C_{\sigma}(\Rd;\Rm)},\qquad\;\,t>0,\;\,x\in\Rd,
\label{1200-gs}
\end{equation}
for any $\f\in C_{\sigma}(\Rd;\Rm)$, where $\beta$ is the constant in Theorem $\ref{point_prop}$. Finally,
for any $0\le \delta\le \gamma_0:=\min\{1-1/\gamma, 1/p_0\}$ there exists a constant $C_2=C_2(\delta,\gamma)\ge 1$ such that
\begin{align}
\label{giroitalia-1}
|(\T(t)\f)(x)|\leq C_2\|\f\|_{B_{\delta}(\Rd;\Rm)}(\varphi(x))^{\delta+\frac{1}{\gamma}},\qquad\;\,t>0,\;\,x\in\Rd,
\end{align}
for any $\f\in B_{\delta}(\Rd;\Rm)$.
\end{enumerate}
\end{thm}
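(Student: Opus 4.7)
The strategy is to recast $\T(t)\f$ via the variation-of-constants formula
\begin{equation*}
\T(t)\f = T(t)\f + \int_0^t T(t-s)\sum_{k=1}^d B_k \partial_k \T(s)\f \, ds,
\end{equation*}
i.e., to view the vector semigroup as a first-order perturbation of the scalar one. The three ingredients I will combine are: the pointwise gradient estimate of Proposition \ref{bundes}, the dominance $\psi \le c_\gamma^{1/\gamma}\varphi^{1/\gamma}$ from Hypothesis \ref{hyp_base}(v), and Jensen's inequality $T(t)\varphi^\theta \le (T(t)\varphi)^\theta$, valid for $\theta \in [0,1]$ since each $p(t,x,\cdot)$ is a probability measure, which combined with \eqref{addio-00} and $\varphi \ge 1$ gives the uniform-in-$t$ bound $T(t)\varphi^\theta \le c\,\varphi^\theta$.

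\textbf{Part (i).} For $\f \in B_b(\Rd;\Rm)$ the scalar term satisfies $|T(t)\f|\le\|\f\|_\infty \le \|\f\|_\infty\varphi^{1/\gamma}$. Hypothesis \ref{hyp_base}(ii) gives $|\sum_k B_k \partial_k \T(s)\f| \le \sqrt{d}\,m \,\psi \,|J\T(s)\f|$, and Proposition \ref{bundes} at $p=p_0$, combined with $T(s)|\f|^{p_0}\le\|\f\|_\infty^{p_0}$, yields $|J\T(s)\f(y)| \le C_{p_0}^{1/p_0} e^{\sigma_{p_0}s}(1\vee s^{-1/2})\|\f\|_\infty$. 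Jensen (since $1/\gamma<1$) and \eqref{addio-00} further give $T(t-s)\psi(x) \le c_\gamma^{1/\gamma} T(t-s)\varphi^{1/\gamma}(x) \le c\,\varphi(x)^{1/\gamma}$, uniformly in $0\le s\le t$. Substituting and noting that $\int_0^\infty e^{\sigma_{p_0}s}(1\vee s^{-1/2})\,ds < +\infty$ (thanks to $\sigma_{p_0}<0$ by Hypothesis \ref{hyp_base}(iv) and the integrable singularity at $0$) delivers \eqref{giroitalia}.

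\textbf{Part (ii).} Estimate \eqref{1200-gs} is immediate from \eqref{pointwise}: for $\f\in C_\sigma(\Rd;\Rm)$ with $\sigma\in(0,1/2]$ one has $|\f|^2\le\|\f\|_{C_\sigma}^2\varphi^{2\sigma}$, so Jensen (legal because $2\sigma\le 1$) and \eqref{addio-00} bound $T(t)|\f|^2(x)$ by $c\,\|\f\|_{C_\sigma}^2\varphi(x)^{2\sigma}$. The extension of $\T(t)$ to $C_\sigma(\Rd;\Rm)$ is obtained by approximating $\f$ with the truncated functions $\f_n:=\f\,\chi_n$, where $\chi_n\in C_c^\infty(\Rd)$ equals $1$ on $B(0,n)$: the uniform-in-$n$ bound $|\T(t)\f_n(x)|\le C_1 e^{\beta t}\|\f\|_{C_\sigma}\varphi(x)^\sigma$ together with the interior parabolic Schauder estimates collected in the Appendix yields, by a diagonal procedure, a classical solution of \eqref{alimentari} with datum $\f$, which we define to be $\T(t)\f$. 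Estimate \eqref{giroitalia-1} is then proved by repeating the argument of Part (i) with $\f\in B_\delta(\Rd;\Rm)$: $|T(t)\f|\le\|\f\|_{B_\delta}T(t)\varphi^\delta\le c\,\|\f\|_{B_\delta}\varphi^\delta$ via Jensen (legitimate since $\delta\le 1-1/\gamma<1$), while Proposition \ref{bundes} at $p=p_0$, together with $|\f|^{p_0}\le\|\f\|_{B_\delta}^{p_0}\varphi^{\delta p_0}$ and Jensen (which requires $\delta p_0\le 1$, i.e., $\delta\le 1/p_0$), produces $|J\T(s)\f(y)|\le c\,e^{\sigma_{p_0}s}(1\vee s^{-1/2})\|\f\|_{B_\delta}\varphi(y)^\delta$. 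Finally $T(t-s)(\psi\varphi^\delta)(x) \le c_\gamma^{1/\gamma}T(t-s)\varphi^{\delta+1/\gamma}(x)\le c\,\varphi(x)^{\delta+1/\gamma}$, valid by Jensen precisely when $\delta+1/\gamma\le 1$. The two constraints combine into $\delta\le\gamma_0$, and the $s$-integral is again uniformly bounded.

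The main obstacle is the simultaneous balancing of three different scales: the range of exponents for which Jensen's inequality may be applied to powers of $\varphi$ under $T(t)$, which forces the condition $\delta\le\gamma_0$; the integrable singularity $s^{-1/2}$ of the gradient estimate at $s=0$; and the exponential decay in $s$ ensured by the strict negativity $\sigma_{p_0}<0$. Without Hypothesis \ref{hyp_base}(iv) the $s$-integral would yield only an exponentially growing bound rather than the $t$-uniform one required here; without Hypothesis \ref{hyp_base}(v) the $\psi$-factor could not be reabsorbed into a power of $\varphi$, so the argument would not close.
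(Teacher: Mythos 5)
Your strategy is exactly the one the paper follows: represent $\T(t)\f$ by the variation-of-constants formula around the scalar semigroup $T(t)$, control the perturbation term through the pointwise gradient estimate of Proposition \ref{bundes}, reabsorb the factor $\psi$ into $\varphi^{1/\gamma}$ via Hypothesis \ref{hyp_base}(v) and \eqref{addio-00}, and use $\sigma_{p_0}<0$ to make the $s$-integral converge uniformly in $t$. All the quantitative steps (the Jensen/H\"older bookkeeping on the exponents of $\varphi$, the origin of the constraint $\delta\le\gamma_0$, the role of the $s^{-1/2}$ singularity) are correct and match Step 3 of the paper's proof of (i) and the final part of (ii). Your derivation of \eqref{1200-gs} directly from \eqref{pointwise} and Jensen is in fact cleaner than the paper's argument via the Hahn decomposition of the kernels $p_{ij}(t,x,dy)$, and the truncation-plus-Schauder construction of the extension to $C_\sigma$ is a legitimate alternative to the paper's kernel-based extension (though you should still check that the limit is independent of the approximating sequence and that the semigroup law survives the limit).

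The genuine gap is that the variation-of-constants formula itself is asserted rather than proved, and in this setting its justification is the technical core of the argument (Steps 1--2 of the paper's proof, which occupy most of it). Two obstacles make it nontrivial. First, one needs a Duhamel principle for the \emph{scalar} inhomogeneous problem $D_tv=\mathcal{A}v+g$ with unbounded coefficients and a source singular like $s^{-1/2}$ at $s=0$; the paper proves this by exhausting $\Rd$ with Cauchy--Dirichlet problems on balls, using monotonicity and interior Schauder estimates to pass to the limit, and a Lyapunov-function maximum principle to identify the limit with the unique bounded solution. Second, and more seriously for your write-up, the source term $g_i(s,\cdot)=\sum_k\sum_h(B_k)_{ih}D_k(\T(s)\f)_h$ is \emph{not bounded in space}, since the entries of $B_k$ are only controlled by $\psi\sim\varphi^{1/\gamma}$; so the scalar Duhamel principle does not apply directly. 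The paper circumvents this by truncating the matrices $B_k$ to bounded ones $B_{k,n}$, applying the Duhamel formula and the gradient estimates (which survive the truncation because $|D_jB_{k,n}|\le|D_jB_k|$) to the truncated systems, and then passing to the limit with dominated convergence (using $T(t)\psi\le c\varphi^{1/\gamma}$) and identifying the limit with $\T(t)\f$ by uniqueness of the classical solution to \eqref{alimentari}. Without this (or an equivalent) approximation step, the formula you start from is unjustified, and everything downstream rests on it.
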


\begin{proof}
(i) Since it is rather long, we split the proof into some steps.

{\em Step 1.} As a starting point, we prove that, if $v\in C^{1,2}((0,+\infty)\times\Rd)\cap C([0,+\infty)\times\Rd)$ is bounded in each strip $[0,T]\times\Rd$ and solves the Cauchy
problem
\begin{equation}
\left\{
\begin{array}{lll}
D_tv(t,x)={\mathcal A}v(t,x)+g(t,x), \quad &t>0, &x\in\Rd,\\[1mm]
v(0,x)=f_0(x), &&x\in\Rd,
\end{array}
\right.
\label{lanotte}
\end{equation}
for some functions $f_0\in C_b(\Rd)$ and $g$ such that the function $(s,x)\mapsto \sqrt{s}g(s,x)$ is bounded and continuous in $[0,T]\times\Rd$, then
\begin{eqnarray*}
v(t,x)=(T(t,\cdot)v_0)(x)+\int_0^t(T(t-s)g(s,\cdot))(x)ds,\qquad\;\,t>0,\;\,x\in\Rd.
\end{eqnarray*}

For this purpose, we observe that Hypothesis \ref{hyp_base}(iii) yields a maximum principle for solutions to the Cauchy problem \eqref{lanotte} which belong to $C^{1,2}((0,+\infty)\times\Rd)\cap C([0,+\infty)\times\Rd)$ and are bounded in each strip $[0,T]\times\Rd$. Hence, $v$ is the unique solution to problem \eqref{lanotte}. Up to splitting $g$ into its positive and negative part, we can assume that $g$ is nonnegative on $(0,T]\times\Rd$. By classical results, the Cauchy-Dirichlet problem
\begin{eqnarray*}
\left\{
\begin{array}{lll}
D_tv(t,x)={\mathcal A}v(t,x)+g(t,x),\quad &t>0, &x\in B(0,n),\\[1mm]
v(t,x)=0, & t>0, & x\in\partial B(0,n),\\[1mm]
v(0,x)=f_0(x), &&x\in B(0,n),
\end{array}
\right.
\end{eqnarray*}
admits, for any $n\in\N$, a unique solution $v_n\in C^{1,2}((0,+\infty)\times B(0,n))$ which is bounded and continuous in
$([0,+\infty)\times\overline{B(0,n)})\setminus (\{0\}\times\partial B(0,n))$. In particular, each function $v_n$ is nonnegative
in $(0,+\infty)\times B(0,n)$. Hence, applying the classical maximum principle to the function $v_{n+1}-v_n$,
we easily deduce that the sequence $(v_n)$ is pointwise increasing in $B(0,n)$. Moreover, since
\begin{eqnarray*}
v_n(t,x)=(T_n(t,\cdot)f_0)(x)+\int_0^t(T_n(t-s)g(s,\cdot))(x)ds,\qquad\;\,t>0,\;\,x\in B(0,n),
\end{eqnarray*}
where $T_n(t)$ is the analytic semigroup of contractions in $C_b(B(0,n))$ associated with the realization of the operator ${\mathcal A}$ with homogeneous Dirichlet boundary conditions,
we can estimate
\begin{eqnarray*}
|v_n(t,x)-(T_n(t)f_0)(x)|\le 2\sqrt{t}\sup_{s\in (0,t]}\sqrt{s}\|g(s,\cdot)\|_{\infty}
\end{eqnarray*}
for any $t>0$ and $n\in\N$. Clearly, the function $v_0$, which is the pointwise limit of the sequence $(v_n)$, fulfills the same estimate, so that
\begin{eqnarray*}
|v_0(t,x)|\le \|f_0\|_{\infty}+2\sqrt{T}\sup_{s\in (0,T]}\sqrt{s}\|g(s,\cdot)\|_{\infty}
\end{eqnarray*}
for any $(t,x)\in [0,T]\times\Rd$ and $T>0$, and
\begin{eqnarray*}
|v_0(t,x)-(T(t)f_0)(x)|\le 2\sqrt{t}\sup_{s\in (0,1]}\sqrt{s}\|g(s,\cdot)\|_{\infty}
\end{eqnarray*}
for any $(t,x)\in [0,1]\times\Rd$. Since the function $T(\cdot)f_0$ is continuous in $[0,+\infty)\times\Rd$, the above estimate shows that $v$ can be extended by continuity in $\{0\}\times\Rd$, where
it equals function $f_0$. To identify $v_0$ with $v$ it suffices we use the interior Schauder estimates in Theorem \ref{thm-A2} and the uniform $L^{\infty}$-boundedness of $v_n$, to infer that
the sequence $(v_n)$ is bounded in $C^{1+\alpha/2,2+\alpha}(K)$ for any compact set $K\subset (0,+\infty)\times\Rd$. Arzel\`a-Ascoli theorem and the pointwise convergence of $v_n$ to $v_0$ show that
$v_n$ converges to $v_0$ in $C^{1,2}(K)$ for any $K$ as above, so that, in particular, $v_0\in C^{1,2}((0,+\infty)\times\Rd)$ and solves the Cauchy problem \eqref{lanotte}. Thus, $v=v_0$.

{\em Step 2.}
Here, based on Step 1, we show that
\begin{equation}
(\T(t)\f)_i(x)=(T(t)f_i)(x)+\int_0^t(T(t-s)w_i(s,\cdot))(x)ds,
\label{divano}
\end{equation}
with $w_i=\sum_{j=1}^d\sum_{h=1}^m(B_j)_{ih}D_ju_h$ for any $(t,x)\in [0,+\infty)\times\Rd,\;\,i=1,\ldots,m$. For this purpose, we fix a sequence $(\vartheta_n)$ of odd and smooth enough functions such that, for any $n\in\N$, $\vartheta_n(t)=t$ if $0\le t \leq n$, $\vartheta_n(t)=n+1/2$
if $t\geq n+1$, $0\le\vartheta_n'\le 1$ in $\R$  and $\vartheta_n''\leq 0$ in $[0,+\infty)$.
Then, we consider the Cauchy problem
\eqref{alimentari}, where now the operator $\A$ is replaced by the operator $\A_n$ defined as $\A$, with the matrices $B_i$ being replaced by the matrices $B_{i,n}$, with entries
$(B_{i,n})_{hk}=\vartheta_n\circ(B_{i,n})_{hk}$ .
Clearly, $|(B_{i,n})_{hk}|\le |(B_i)_{hk}|\le \xi\sqrt{\lambda_Q}$ in $\Rd$, for any $n\in\N$, $i=1,\ldots,d$ and $h,k=1,\ldots,m$, so that Theorem 2.9 in \cite{AALT} applies and shows
that, for any $n\in\N$, there exists a unique function $\uu_n\in C([0,+\infty)\times\Rd;\Rm)\cap C^{1,2}((0,+\infty)\times\Rd;\Rm)$, which is bounded in each strip $[0,T]\times\Rd$, solves the equation
$D_t\uu_n=\A_n\uu_n$ on $(0,+\infty)\times\Rd$ and agrees with the function $\f$ on $\{0\}\times\Rd$. As a first step, we observe that, up to a subsequence,
$\uu_n$ converges to a function $\vv$ in $C^{1,2}(K)$ for any compact set $K\subset (0,+\infty)\times\Rd$. Indeed, by Theorem \eqref{point_prop}, the sequence $(\uu_n)$ is bounded in $[0,T]\times\Rd$ for any $T>0$ and, thus, the interior Schauder estimates in Theorem \ref{thm-A2} show
it is bounded in $C^{1+\alpha/2,2+\alpha}(K)$ for any $K$ as above,  Hence, we can argue as in the last part of Step 1. In particular, it turns out the function $\vv$ solves the differential equation $D_t\vv=\A\vv$ in $(0,+\infty)\times\Rd$ and is bounded in each strip $[0,T]\times\Rd$.

Next, we observe that, by Proposition \ref{bundes}, which can be applied also in this situation
since $|D_jB_{i,n}|\le \|\vartheta_n'\|_{\infty}|D_jB_i|\le |D_jB_i|$ for any $i,j=1,\ldots,d$ and $n\in\N$, we deduce that
\begin{eqnarray*}
|J_x\uu_n(t,x)|^2\le c_T e^{2\sigma_{p_0}t}t^{-1}((T(s)|\f|^{p_0})(x))^{\frac{2}{p_0}}
\end{eqnarray*}
for any $t\in (0,T]$, $x\in\Rd$, $T>0$ and some positive constant $c_T$ depending also on $p_0$.

In view of the previous estimate and Step 1, we can write
\begin{equation*}
u_{n,i}(t,x)=(T(t,\cdot)f_i)(x)+\int_0^t(T(t-s)w_{n,i}(s,\cdot))(x)ds
\end{equation*}
for any $t>0$, $x\in\Rd$, $n\in \N$ and $i=1, \ldots,m$, where $w_{n,i}$ ($i=1,\ldots,d$) is defined as $w_i$, with the matrices $B_j$ being replaced by the matrices $B_{j,n}$ ($n\in\N$).
Clearly, the function $(r,s,x)\mapsto (T(r)w_{n,j}(s,\cdot))(x)$ is continuous
on $(0,+\infty)\times (0,+\infty)\times\Rd$ and, in view of Hypothesis \ref{hyp_base}(ii), we can estimate
\begin{align}
|w_{n,i}(s,x)|
\leq & c_Ts^{-\frac{1}{2}}\|\f\|_{\infty}\psi(x),\qquad\;\,s\in (0,T],\;\,x\in\Rd,
\label{addio-0}
\end{align}
for some positive constant $c_T$ depending also on $p_0$. Hypothesis \ref{hyp_base}(v), the H\"older inequality, formula \eqref{valigia} and estimate \eqref{addio-00} show that
\begin{align}
(T(t)\psi)(x)=&\int_{\Rd}\psi p(t,x,dy)\le \bigg (\int_{\Rd}\psi^\gamma p(t,x,dy)\bigg )^{\frac{1}{\gamma}}\le c_{\gamma}^{\frac{1}{\gamma}}\bigg (\int_{\Rd}\varphi p(t,x,dy)\bigg )^{\frac{1}{\gamma}}\notag\\
=&c_{\gamma}^{\frac{1}{\gamma}}((T(t)\varphi)(x))^{\frac{1}{\gamma}}
\le c_{\gamma}^{\frac{1}{\gamma}}\bigg (\frac{a_*}{c_*}+\varphi(x)\bigg )^{\frac{1}{\gamma}}
\label{addio}
\end{align}
for any $t>0$ and $x\in\Rd$. Taking into account that $\varphi\ge 1$ in $\Rd$, we conclude that $T(\cdot)\psi\le c\varphi^{1/\gamma}$ in $(0,+\infty)\times\Rd$. In particular, $T(\cdot)\psi$ is bounded in
$(0,+\infty)\times B(0,r)$ for any $r>0$. Hence, taking also Theorem \ref{maremaremare} into account, we can apply twice the dominated convergence theorem to show, first, that
$T(t-\cdot)w_{n,i}$
pointwise converges to $T(t-\cdot)\widetilde w_i$ (where $\widetilde w_i$ is defined as $w_i$ ($i=1,\ldots,m$) with $\uu$ being replaced by $\vv$) and then that
\begin{eqnarray*}
\lim_{n\to +\infty}\int_0^t(T(t-s)w_{n,i}(s,\cdot))(x)ds
=\int_0^t(T(t-s)\widetilde w_i(s,\cdot))(x)ds
\end{eqnarray*}
for any $t>0$ and $x\in\Rd$. It thus follows that
\begin{eqnarray*}
v_i(t,x)=(T(t)f_i)(x)\!+\!\int_0^t(T(t-s)w_i(s,\cdot))(x)ds,\qquad\;\, t>0,\;\, x\in\Rd,\;\,i=1,\ldots,m.
\end{eqnarray*}
Using estimates \eqref{addio-0} and \eqref{addio} we conclude that
\begin{align*}
|v_i(t,x)-f_i(x)|\le &|(T(t)f_i)(x)-f_i(x)|+c\|\f\|_{\infty}\int_0^ts^{-\frac{1}{2}}(T(t-s)\psi)(x)ds\\
\le &|(T(t)f_i)(x)-f_i(x)|+c\|\f\|_{\infty}(\varphi(x))^{\frac{1}{\gamma}}\sqrt{t}
\end{align*}
for any $t\in (0,1]$, $x\in\Rd$, $i=1,\ldots,m$ and some positive constant $c$ depending on $d,m$ and $p_0$.
From this chain of inequalities we easily deduce that $\vv$ is continuous on $\{0\}\times\Rd$, where it equals the function $\f$.
Summing up, we have shown that $\vv\in C^{1,2}((0,+\infty)\times\Rd;\Rm)\cap C([0,+\infty)\times\Rd;\Rm)$, solves the differential equation
$D_t\vv=\A\vv$ in $(0,+\infty)\times\Rd$ and $\vv(0,\cdot)=\f$. By Theorem \ref{point_prop}, we conclude that $\vv=\uu$ and formula \eqref{divano} follows.

{\em Step 3.} Using \eqref{capitanamerica} and \eqref{divano}, we can estimate
\begin{align*}
w_i(s,x)\le &\sqrt{dm}\psi(x)|J_x\T(s)\f)(x)|
\le c_2(s^{-\frac{1}{2}}\vee 1)e^{\sigma_2s}\|\f\|_{\infty}\psi(x)
\end{align*}
for any $s>0$, $x\in\Rd$, $i=1,\ldots,m$. Hence, for $t>0$, $x\in\Rd$ and $i=1,\ldots,m$ we get
\begin{align*}
|u_i(t,x)|\le &|(T(t)f_i)(x)|+\int_0^t|(T(t-s)w_i(s,\cdot))(x)|ds\\
\le & \|f_i\|_{\infty}+c\|\f\|_{\infty}(\varphi(x))^{\frac{1}{\gamma}}\int_0^t(s^{-\frac{1}{2}}\vee 1)e^{\sigma_{p_0}s}ds\\
\le & \|f_i\|_{\infty}+c\|\f\|_{\infty}(\varphi(x))^{\frac{1}{\gamma}}.
\end{align*}
Estimate \eqref{giroitalia} follows at once for functions in $C_b(\Rd;\Rm)$.

Suppose now that $\f\in B_b(\Rd;\Rm)$ and let $(\f_n)\subset C_b(\Rd;\Rm)$ be a bounded sequence converging to $\f$ almost everywhere in $\Rd$, with
respect to the Lebesgue measure, and such that $\|\f_n\|_{\infty}\le\|\f\|_{\infty}$ for any $n\in\N$. Then, \cite[Corollary 3.4]{AALT} shows that $\T(\cdot)\f_n$ converges to $\T(\cdot)\f$ pointwise in $(0,+\infty)\times\Rd$, as $n$ tends to $+\infty$. Writing \eqref{giroitalia} with $\f$ being replaced by $\f_n$ and letting $n$ tend to $+\infty$, we complete the proof of \eqref{giroitalia}.

(ii) Fix $\sigma \in (0,1/2]$. Without loss of generality, we can assume that all the components of $\f\in C_{\sigma}(\Rd;\Rm)$ are nonnegative since the general case then will follow
splitting $\f=\f^+-\f^-$, where the $i$-th component of $\f^+$ (resp. $\f^-$) is the positive part of $f_i$ (resp. $-f_i$).

For any $n\in\N$, we set $\f_n:=\vartheta_n\f$, where $(\vartheta_n)$ is
a sequence of smooth enough functions satisfying $\chi_{B(0,n)}\leq \vartheta_n\leq \chi_{B(0,2n)}$. We also fix $i,j\in\{1,\ldots,m\}$, $t>0$, $x\in\Rd$ and denote by $P=P_{ij}^{t,x}$
the positive set of the Hahn decomposition of $p_{ij}(t,x,dy)$. Since each sequence $(f_{n,j})$ is weakly increasing, by monotone convergence we can infer that
\begin{align*}
\lim_{n\to +\infty}\int_{\Rd}f_{n,j}(y)\chi_{P}(y)p_{ij}(t,x,dy)=\int_{\Rd}f_j(y)\chi_{P}(y)p_{ij}(t,x,dy).
\end{align*}
Moreover, as it has been noticed in Section \ref{sect-2}, the semigroup $\T(t)$ can be extended to $B_b(\Rd;\Rm)$ through formula \eqref{domanda} and
$|\T(t)\f|\le e^{\beta t}(T(t)|\f|^2)^{1/2}$, pointwise in $\Rd$ for any $\f\in B_b(\Rd;\Rm)$, where $\beta$ is the constant in Theorem \ref{point_prop}.
In particular, since
\begin{align*}
\int_{\Rd}f_{n,j}(y)\chi_{P}(y)p_{ij}(t,x,dy)=&|(\T(t)(f_{n,j}\chi_{P}\e_j))_i(x)|\le e^{\beta t}(T(t)|\vartheta_n f_{j}|^2(x))^{\frac{1}{2}}\\
\le & e^{\beta t} \bigg [\bigg (T(t)\bigg (\frac{|\vartheta_n f_{j}|^2}{\varphi^{2\sigma}}\bigg )^{\frac{1}{1-2\sigma}}\bigg )(x)\bigg ]^{\frac{1}{2}-\sigma}((T(t)\varphi)(x))^{\sigma}\\
\le &e^{\beta t}\|\f\|_{C_{\sigma}(\Rd;\Rm)}((T(t)\varphi)(x))^{\sigma}\\
\le &e^{\beta t}\|\f\|_{C_{\sigma}(\Rd;\Rm)}\bigg (\frac{a_*}{c_*}+\varphi(x)\bigg)^{\sigma}\\
\le & ce^{\beta t}\|\f\|_{C_{\sigma}(\Rd;\Rm)}(\varphi(x))^{\sigma},
\end{align*}
we conclude that $\int_{\Rd}f_j\chi_P p_{ij}(t,x,dy)$
is real and
\begin{equation}
\int_{\Rd}f_j(y)\chi_{P}(y)p_{ij}(t,x,dy)\le ce^{\beta t}\|\f\|_{C_{\sigma}(\Rd;\Rm)}(\varphi(x))^{\sigma}.
\label{pannocchie}
\end{equation}
The same arguments can be applied to show that
\begin{align}
\int_{\Rd}f_j\chi_N p_{ij}(t,x,dy)\le ce^{\beta t}\|\f\|_{C_{\sigma}(\Rd;\Rm)}(\varphi(x))^{\sigma},
\label{pannocchie-1}
\end{align}
where $N=N_{ij}^{t,x}$ is the negative set of the Hahn decomposition of the measure $p_{ij}(t,x,dy)$.
In particular, the interior Schauder estimates in Theorem \ref{thm-A2} can be used to prove that the function $\T(\cdot)\f$ is continuous in $\Rd$ and,
together with \eqref{pannocchie} and \eqref{pannocchie-1}, they allow to conclude that each operator is bounded from $C_{\sigma}(\Rd;\Rm)$ into itself and estimate \eqref{1200-gs} holds true.
To prove the semigroup rule, we observe that
$\T(t)\f_n=\T(t-s)\T(s)\f_n$ in $\Rd$ for any $n\in\N$ and $0<s<t$. Moreover, since $|\f_n|+|\T(s)\f_n|\leq c\varphi^{\sigma}$ in $\Rd$, for any $n\in\N$, $s>0$, by the dominated convergence theorem we conclude that $\T(t)\f=\T(t-s)\T(s)\f$.

Finally, estimate \eqref{giroitalia-1} can be obtained adapting the arguments used in the proof of (i), taking the positivity of $T(t)$ into account. More precisely, using \eqref{capitanfuturo} we can estimate
\begin{align*}
|u_{n,i}(t,\cdot)|\le &|T(t)f_i|+\int_0^t (T(t-s)|w_{n,i}(s,\cdot)|)(\cdot)ds\notag\\
\le &\|\f\|_{C_{\delta}(\Rd;\Rm)}T(t)\varphi^\delta\\
&+ c_{p_0}\int_0^t (s^{-\frac{1}{2}}\vee 1)e^{\sigma_{p_0}s}[T(t-s)(\psi (T(s)|\f|^{p_0})^{\frac{1}{p_0}})](\cdot)ds
\end{align*}
in $\Rd$ for any $\f\in C_{\delta}(\Rd;\Rm)$. Observe that for any $s>0$
\begin{align*}
T(s)|\f|^{p_0}\le \|\f\|_{C_{\delta}(\Rd;\Rm)}^{p_0}T(s)\varphi^{\delta p_0}\le c\|\f\|_{C_{\delta}(\Rd;\Rm)}^{p_0}\varphi^{\delta p_0},
\end{align*}
where in the last inequality we used the fact that $\delta p_0\le 1$ and $T(t)\varphi^\eta\le (T(t)\varphi)^\eta\le (c^{-1}_*a_*+\varphi)^\eta\le\varphi^{\eta}$ for any $t\ge 0$ and $\eta\le 1$
Hence, using the previous estimates, Hypothesis \ref{hyp_base}(v), again the positivity of $T(t)$ and estimate \eqref{addio-00}, we can infer that
\begin{align*}
|u_{n,i}(t,\cdot)|\le &\|\f\|_{C_{\delta}(\Rd;\Rm)}T(t)\varphi^\delta\\
&+c_{p_0}\|\f\|_{C_{\delta}(\Rd;\Rm)}\int_0^t (s^{-\frac{1}{2}}\vee 1)e^{\sigma_{p_0}s}T(t-s)\varphi^{\delta+\frac{1}{\gamma}} ds\notag\\
\le &c_{p_0}\|\f\|_{C_{\delta}(\Rd;\Rm)}\bigg (\varphi^{\delta}+\varphi^{\delta+\frac{1}{\gamma}}\int_0^t(s^{-\frac{1}{2}}\vee 1)e^{\sigma_{p_0}s}ds\bigg )
\end{align*}
for any $t>0$. Letting $n$ tend to $+\infty$, estimate \eqref{giroitalia-1} follows for functions $f\in C_{\delta}(\Rd;\Rm)$.

If $\f$ is not continuous, then it suffices to approximate\footnote{This can be easily done, approximating the bounded function $\f/\varphi^{\delta}$ with a bounded sequence $(\widetilde\f_n)\subset C_b(\Rd;\Rm)$ converging to $\f/\varphi^{\delta}$ almost everywhere in $\Rd$ with respect to the Lebesgue measure and, hence, with respect to each measure $p(t,x,dy)$. Setting $\f_n=\widetilde \f_n \varphi^{\delta}$ we obtain the sought for sequence.} it with a sequence $(\f_n)$ of continuous functions, converging to $\f$ almost everywhere in $\Rd$ and such that
$\sup_{x\in\Rd}|\f_n(x)(\varphi(x))^{-\delta}|\le {\rm esssup}_{x\in\Rd}|\f(x)(\varphi(x))^{-\delta}|$, and use the dominated convergence together with the above result which shows that $\int_{\Rd}\varphi^{1/\gamma}|p_{ij}(t,x,dy)|<+\infty$ for any $t>0$, $x\in\Rd$, $i,j=1,\ldots,m$, to infer that $\T(\cdot)\f_n$ converges to $\T(\cdot)\f$ as $n$ tends to $+\infty$ pointwise in $(0,+\infty)\times\Rd$. Writing \eqref{giroitalia-1} with $\f_n$ replacing $\f$ and letting $n$ tend to $+\infty$, \eqref{giroitalia-1} follows in its full generality.
\end{proof}

\begin{prop}
\label{carovana}
Let $(\f_n)$ be a bounded sequence in $C_{\gamma^{-1}}(\Rd;\Rm)$ which converges to a function $\f\in C(\Rd;\Rm)$, locally uniformly in $\Rd$. Then,
$(\T(\cdot)\f_{n})$ converges uniformly in $(0,+\infty)\times B(0,R)$ to $\T(\cdot)\f$,
for any $R>0$.
\end{prop}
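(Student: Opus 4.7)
The plan is to set $\g_n := \f_n - \f$ and show, by linearity of $\T(t)$, that $\T(\cdot)\g_n \to \mathbf{0}$ uniformly on $(0,+\infty)\times B(0,R)$. The limit $\f$ automatically lies in $C_{1/\gamma}(\Rd;\Rm)$ with $\|\f\|_{C_{1/\gamma}(\Rd;\Rm)}\le M:=\sup_n\|\f_n\|_{C_{1/\gamma}(\Rd;\Rm)}$ (by pointwise convergence), so the $\g_n$'s are bounded in $C_{1/\gamma}(\Rd;\Rm)$ by $2M$ and tend to $\mathbf 0$ locally uniformly. I would then fix a smooth cutoff $\chi_N$ with $\chi_{B(0,N)}\le\chi_N\le\chi_{B(0,2N)}$ and split $\g_n=\chi_N\g_n+(1-\chi_N)\g_n$, treating each term with a different tool from Theorem \ref{parodontax}.

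For the compactly supported piece, $\chi_N\g_n\in B_b(\Rd;\Rm)$, so \eqref{giroitalia} yields
\begin{equation*}
|(\T(t)(\chi_N\g_n))(x)|\le C_0\|\chi_N\g_n\|_{\infty}\varphi(x)^{1/\gamma}\le C_0\|\chi_N\g_n\|_{\infty}\max_{B(0,R)}\varphi^{1/\gamma},
\end{equation*}
for every $t>0$ and $x\in B(0,R)$, and $\|\chi_N\g_n\|_\infty\to 0$ as $n\to+\infty$ (at fixed $N$) because $\g_n\to\mathbf{0}$ uniformly on the compact set $\overline{B(0,2N)}$. For the tail piece, the key observation is the strict inequality $\gamma_0=\min\{1-1/\gamma,1/p_0\}>1/\gamma$: indeed $\gamma>2$ forces $1-1/\gamma>1/\gamma$, and $p_0\le 2<\gamma$ forces $1/p_0>1/\gamma$. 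Pick any $\delta\in(1/\gamma,\gamma_0]$, set $\Phi_N:=\inf_{|y|\ge N}\varphi(y)$, which diverges since $\varphi$ is a Lyapunov function. For $|y|\ge N$ one has $\varphi(y)^{1/\gamma}\le \Phi_N^{1/\gamma-\delta}\varphi(y)^{\delta}$ (as $1/\gamma-\delta<0$), and consequently $(1-\chi_N)\g_n\in B_\delta(\Rd;\Rm)$ with $\|(1-\chi_N)\g_n\|_{B_\delta(\Rd;\Rm)}\le 2M\,\Phi_N^{1/\gamma-\delta}$. Applying \eqref{giroitalia-1} with this $\delta$ yields
\begin{equation*}
|(\T(t)((1-\chi_N)\g_n))(x)|\le 2MC_2\,\Phi_N^{1/\gamma-\delta}\varphi(x)^{\delta+1/\gamma}\le 2MC_2\,\Phi_N^{1/\gamma-\delta}\max_{B(0,R)}\varphi^{\delta+1/\gamma},
\end{equation*}
uniformly in $t>0$ and $x\in B(0,R)$, and the right-hand side vanishes as $N\to+\infty$.

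To conclude, given $\varepsilon>0$ and $R>0$ one first picks $N$ large enough that the tail bound is below $\varepsilon/2$ on $B(0,R)$, and then, with $N$ fixed, picks $n_0$ so that for $n\ge n_0$ the compactly supported piece is below $\varepsilon/2$ uniformly in $t>0$ and $x\in B(0,R)$. The only nontrivial step is verifying $\gamma_0>1/\gamma$, which is exactly what lets one inflate the weight from $\varphi^{1/\gamma}$ to $\varphi^{\delta}$ and recover decay in $N$ via the scalar factor $\Phi_N^{1/\gamma-\delta}$; once this is observed, everything else is a standard two-scale truncation based on the weighted bounds of Theorem \ref{parodontax}. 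The main obstacle I would expect in writing out the argument is the careful check that $(1-\chi_N)\g_n$ can legitimately be fed into \eqref{giroitalia-1}, i.e., that the $B_\delta$-extension of $\T(t)$ constructed in the proof of Theorem \ref{parodontax}(ii) coincides with the one used here; this follows from the density/dominated-convergence construction described there.
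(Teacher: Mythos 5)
Your argument is correct, but it follows a genuinely different route from the one in the paper. The paper's proof first extends the Duhamel-type representation $(\T(t)\g)_i=T(t)g_i+\int_0^t T(t-s)\sum_{j,h}(B_j)_{ih}D_j(\T(s)\g)_h\,ds$ to $\g\in C_{\gamma^{-1}}(\Rd;\Rm)$, invokes the pointwise gradient estimate \eqref{capitanfuturo} with $p=2$, and then reduces everything to showing that $T(t)|\g_n|^2$ vanishes uniformly on $(0,+\infty)\times B(0,R)$; the tail is handled by splitting the integral against $p(t,x,dy)$ over $B(0,r)$ and its complement and using \eqref{addio-00} together with the blow-up of $\varphi$. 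You instead truncate $\g_n$ itself by a spatial cutoff and apply the two $t$-uniform weighted bounds of Theorem \ref{parodontax} directly: \eqref{giroitalia} to the compactly supported piece (small in $n$ for fixed $N$) and \eqref{giroitalia-1} with some $\delta\in(1/\gamma,\gamma_0]$ to the tail, whose $B_\delta$-norm is small like $\Phi_N^{1/\gamma-\delta}$. The inequality $\gamma_0>1/\gamma$ that you isolate is indeed the crux and is valid (the paper itself uses it in the proof of Theorem \ref{chimica}); note that you may simply take $\delta=1/2$, which keeps you inside both the $C_\sigma$-range of Theorem \ref{parodontax}(ii) and the $B_\delta$-range of \eqref{giroitalia-1}. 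Your approach buys a proof that bypasses the variation-of-constants formula and the gradient estimates entirely, at the price of the consistency check you correctly flag, namely that the extension of $\T(t)$ to $C_\delta$ is linear and compatible with its action on bounded functions; since both extensions are realized through the same integral representation \eqref{domanda} against the measures $p_{ij}(t,x,dy)$, for which $\varphi^{\delta}$ is integrable, the splitting $\T(t)\g_n=\T(t)(\chi_N\g_n)+\T(t)((1-\chi_N)\g_n)$ is legitimate and the argument closes. One cosmetic remark: where you write that the weight inflation "recovers decay in $N$", it is worth stating explicitly that the tail estimate is uniform in $n$ as well as in $t$, which is what makes the final two-parameter limit ($N$ first, then $n$) work.
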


\begin{proof}
We fix $r>0$, set $\g_n:=\f_n-\f$ and notice that $\f,\g_n\in C_{\gamma^{-1}}(\Rd;\Rm)$ for any $n\in\N$.
By Theorem \ref{parodontax}, the functions $\T(t)\f$ and $\T(t)\g_n$ are well defined for any $t>0$ and $n\in\N$.
Moreover, the arguments in Step 2 of the proof of Theorem \ref{parodontax} can be easily adapted to prove, by an approximation argument, that
\begin{eqnarray*}
(\T(t)\g)_i(x)=(T(t)g_i)(x)+\int_0^t\bigg (T(t-s)\sum_{j=1}^d\sum_{h=1}^m(B_j)_{ih}D_j(\T(s)\g)_h\bigg )(x)ds,
\end{eqnarray*}
for any $t>0$, $i=1,\ldots,m$ and $x\in\Rd$, also for any $\g\in C_{1/\gamma}(\Rd;\Rm)$. Applying this formula with $\g=\g_n$ and using \eqref{capitanfuturo} with\footnote{Note
 that such an inequality can be extended to functions in $C_{\gamma^{-1}}(\Rd;\Rm)$ by a density argument, approximating any such function ${\bf h}$ with a sequence
of bounded and continuous functions, which is bounded in $C_{\gamma^{-1}}(\Rd;\Rm)$ and converges to ${\bf h}$ locally uniformly in $\Rd$.} $p=2$, we can infer that
\begin{align*}
&|(\T(t)\f_n)_i(x)-(\T(t)\f)_i(x)|\\
\leq & |(T(t)g_{n,i})(x)|
+c\int_0^t(s^{-\frac{1}{2}}\vee 1)e^{\sigma_{2}s}[T(t-s)(\psi(T(s)|\g_n|^{2})^{\frac{1}{2}})](x)ds \\
\leq & |(T(t)g_{n,i})(x)|
+c|(T(t)|\g_n|^{2})(x)|^{\frac{1}{2}}\int_0^t(s^{-\frac{1}{2}}\vee 1)e^{\sigma_{2}s}|(T(t-s)\varphi)(x)|^{\frac{1}{2}}ds\\
\leq & |(T(t)g_{n,i})(x)|
+c|(T(t)|\g_n|^2)(x)|^{\frac{1}{2}}\sqrt{\varphi(x)}\int_0^{+\infty}(s^{-\frac{1}{2}}\vee 1)e^{\sigma_{2}s}ds
\end{align*}
for any $t>0$ and $x\in\Rd$. Now, we fix $R>0$, $x\in B(0,R)$ and for any $r>0$ we split (see \eqref{valigia})
\begin{align*}
(T(t)|\g_n|^2)(x)
=& \int_{B(0,r)}|\g_n|^2 p(t,x,dy)
+\int_{\Rd\setminus B(0,r)}|\g_n|^2 p(t,x,dy)\notag\\
\le &\|\g_n\|_{C_b(B(0,r))}^2
+\sup_{n\in\N}\|\g_n\|_{C_{\gamma^{-1}}(\Rd;\Rm)}^2\int_{\Rd\setminus B(0,r)}\varphi^{\frac{2}{\gamma}}p(t,x,dy)\\
\le &\|\g_n\|_{C_b(B(0,r))}^2
+\frac{\sup_{n\in\N}\|\g_n\|_{C_{\gamma^{-1}}(\Rd;\Rm)}^2}{\inf_{\Rd\setminus B(0,r)}\varphi^{1-\frac{2}{\gamma}}}\int_{\Rd\setminus B(0,r)}\varphi p(t,x,dy)\\
\le &\|\g_n\|_{C_b(B(0,r))}^2+\frac{c}{\inf_{\Rd\setminus B(0,r)}\varphi^{1-\frac{2}{\gamma}}}\left (\frac{a_*}{c_*}+\sup_{B(0,R)}\varphi\right )
\end{align*}
for any $t>0$ and $n\in\N$. Letting first $n$ and then $r$ tend to $+\infty$ in the first- and last-side of the previous chain of inequalities, taking into account
that $\varphi$ blows up as $|x|$ tends to $+\infty$, we easily conclude that $T(t)|\g_n|^2$ vanishes uniformly in $(0,+\infty)\times B(0,R)$ for any $R>0$.

Finally, since
\begin{align*}
|(T(t)g_{n,i})(x)|\le &(T(t)|\g_n|)(x)=\int_{\Rd}|\g_n|p(t,x,dy)\\
\le &\bigg (\int_{\Rd}|\g_n|^2p(t,x,dy)\bigg )^{\frac{1}{2}}(p(t,x,\Rd))^{\frac{1}{2}}
=|(T(t)|\g_n|^2)(x)|^{\frac{1}{2}}
\end{align*}
for any $t>0$, $x\in\Rd$, $n\in\N$ and $i=1,\ldots,m$,
where we have taken into account that the $p(t,x,dy)$'s are probability measures, we also conclude
that $T(\cdot)|\g_n|$ vanishes uniformly in $(0,+\infty)\times B(0,R)$ for any $R>0$.
\end{proof}

\section{Systems of invariant measures}
\label{sect-3}

\begin{defi}
A family of signed finite Borel measures on $\Rd$ $\{\mu_j: j=1,\ldots,m\}$ is a system  of invariant measures for $\T(t)$
if for any $\f\in C_b(\Rd;\Rm)$ and $t>0$ it holds that
\begin{align}
\sum_{j=1}^m\int_{\Rd}(\T(t)\f)_jd\mu_j=\sum_{j=1}^m\int_{\Rd}f_jd\mu_j.
\label{caricatore}
\end{align}
\end{defi}

By using the continuity properties of the semigroup $\T(t)$ proved in \cite[Corollary 3.4]{AALT} and the dominated convergence theorem, it follows immediately that formula \eqref{caricatore} holds true for any $\f\in C_b(\Rd;\Rm)$ if and only if it is satisfied by any $\f\in B_b(\Rd;\Rm)$.
Moreover,

\begin{prop}
\label{prop-benevento}
A family $\{\mu_j: j=1,\ldots,m\}$ of $($signed$)$ finite measures is a system of invariant measures for $\T(t)$ if and only if
\begin{align}
\label{trasportino}
\sum_{i=1}^m\int_{\R^d}(\A \f)_id\mu_i=0, \qquad \f\in D_{\max}(\A).
\end{align}
\end{prop}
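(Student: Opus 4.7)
The plan is to establish both implications by passing the generator $\A$ across the integration against $\boldsymbol{\mu}$, exploiting the characterization of the weak generator provided by Proposition \ref{incarico}.

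For the \emph{necessity}, I assume that $\{\mu_j\}$ is a system of invariant measures and fix $\f\in D_{\max}(\A)=\boldsymbol{D}$. For every $t>0$, identity \eqref{caricatore} may be rewritten as
\[
\sum_{j=1}^{m}\int_{\Rd}\frac{(\T(t)\f)_j-f_j}{t}\,d\mu_j=0.
\]
By the very definition of $\boldsymbol{D}$, the functions $t^{-1}(\T(t)\f-\f)$ are uniformly bounded in the sup-norm for $t\in(0,1]$ and converge pointwise to $\A\f$ as $t\to 0^+$. Since each $|\mu_j|$ is a finite measure, dominated convergence applied componentwise delivers \eqref{trasportino}.

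For the \emph{sufficiency}, I first fix $\f\in D_{\max}(\A)$ and observe that, by Proposition \ref{incarico}, $\T(s)\f\in D_{\max}(\A)$ for all $s\ge 0$ with $\A\T(s)\f=\T(s)\A\f$. Formula \eqref{fantine}, together with the fundamental theorem of calculus, yields
\[
(\T(t)\f)(x)-\f(x)=\int_0^t (\T(s)\A\f)(x)\,ds,\qquad t\ge 0,\;x\in\Rd.
\]
The integrand is dominated by $e^{\beta s}\|\A\f\|_\infty$ thanks to \eqref{rettore}, hence Fubini's theorem applies and the hypothesis \eqref{trasportino} for $\T(s)\f\in D_{\max}(\A)$ gives
\[
\sum_{j=1}^{m}\int_{\Rd}((\T(t)\f)_j-f_j)\,d\mu_j=\int_0^t\sum_{j=1}^{m}\int_{\Rd}(\A\T(s)\f)_j\,d\mu_j\,ds=0,
\]
which is precisely \eqref{caricatore} restricted to $D_{\max}(\A)$.

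To extend \eqref{caricatore} to an arbitrary $\f\in C_b(\Rd;\Rm)$ I approximate via the resolvent \eqref{Res}: for $n>\beta$ set $\uu_n:=n\Res(n)\f$, which lies in $D_{\max}(\A)$ by Proposition \ref{incarico}. The change of variable $s=nt$ gives $\uu_n(x)=\int_0^{+\infty}e^{-s}(\T(s/n)\f)(x)\,ds$, so that $\|\uu_n\|_\infty\le\|\f\|_\infty(1-\beta/n)^{-1}$, and the continuity of $\T(\cdot)\f$ at $0$ from Theorem \ref{point_prop} combined with dominated convergence yields $\uu_n\to\f$ pointwise. By \cite[Corollary 3.4]{AALT}, $\T(t)\uu_n\to\T(t)\f$ pointwise, while $\|\T(t)\uu_n\|_\infty\le e^{\beta t}\|\uu_n\|_\infty$ is uniformly bounded in $n$; writing \eqref{caricatore} for $\uu_n$ and letting $n\to+\infty$ via dominated convergence on the finite measures $|\mu_j|$ then transfers the identity to $\f$. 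The only delicate point is precisely this last approximation, since it depends on the preservation of bounded pointwise convergence by the (non-strongly continuous) semigroup $\T(t)$, which is exactly the content of \cite[Corollary 3.4]{AALT}.
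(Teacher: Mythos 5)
Your proof is correct and follows essentially the same route as the paper's: necessity via the uniform boundedness and pointwise convergence of the difference quotients $t^{-1}(\T(t)\f-\f)$ guaranteed by Proposition \ref{incarico} together with dominated convergence on the finite measures, and sufficiency by integrating \eqref{fantine} and applying Fubini with the hypothesis tested on $\T(s)\f\in D_{\max}(\A)$. The only difference is that you explicitly carry out the final resolvent approximation $n\Res(n)\f\to\f$ extending \eqref{caricatore} from $D_{\max}(\A)$ to all of $C_b(\Rd;\Rm)$ — a step the paper leaves implicit — and that argument is correct as written.
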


\begin{proof}
First, we suppose that $\{\mu_i:i=1, \ldots, m\}$ is a system of invariant measures for $\T(t)$ and fix $\f\in D_{\max}(\A)$.
The invariance property of the system $\{\mu_j: j=1,\ldots,m\}$ implies that
\begin{align}
\sum_{j=1}^m\int_{\Rd}\frac{(\T(t)\f)_j-f_j}{t}d\mu_j=0,\qquad\;\,t>0.
\label{tivano-1}
\end{align}
By Proposition \ref{incarico} we know that, for any $j=1,\ldots,m$, $t^{-1}((\T(t)\f)_j-f_j)$ converges to $({\mathcal A}\f)_j$ pointwise in $\Rd$ as $t$ tends to $0^+$.
Moreover, $\sup_{t\in (0,1]}t^{-1}|(\T(t)\f)_j-f_j|$ is a bounded function in $\Rd$, thanks to Proposition \ref{incarico}. Since each $\mu_j$ is a finite measure,
we can let $t$ tend to $0^+$ in both sides of \eqref{tivano-1} and obtain \eqref{trasportino}.

Let us now assume that \eqref{trasportino} holds true in $D_{\max}(\A)$ and fix $\f$ in such a space. Then,
\begin{eqnarray*}
(\T(t)\f)_i(x)-f_i(x)=\int_0^t(\T(s)\A\f)_i(x)ds,\qquad\;\,t>0,\;\,x\in\Rd,\;\,i=1,\ldots,m.
\end{eqnarray*}
Therefore, integrating again in $\Rd$ with respect to the measure $\mu_i$, summing over $i$ from $1$ to $m$ and applying the Fubini's theorem, we deduce that
\begin{align*}
\sum_{i=1}^m\int_{\Rd}((\T(t)\f)_i\!-\!f_i)d\mu_i&=
\sum_{i=1}^m\int_{\Rd}d\mu_i\!\int_0^t(\T(s)\A\f)_ids\\
&= \int_0^t\bigg (\sum_{i=1}^m\int_{\Rd}(\A\T(s)\f)_id\mu_i\bigg )ds=0,
\end{align*}
and this completes the proof.
\end{proof}

\medskip

Under Hypotheses \ref{hyp_base} we prove that there exist $m$-systems of invariant measures for $\T(t)$.

The following result shows that the average in $(0,t)$ of any component of the function $\T(t)\f$ converges
as $t$ tends to $+\infty$. As in the scalar case, this convergence allows us to define the systems of invariant measures
associated to $\{\T(t)\}_{t\geq0}$ (see \cite[Prop. 8.1.13]{newbook}).

\begin{thm}
\label{succhinopera}
Assume that Hypotheses $\ref{hyp_base}$ hold true. Then, there exist at least $m$ systems $\{\mu^i_j: j=1, \ldots,m\}$, $i=1, \ldots, m$, of invariant measures associated to the semigroup $\T(t)$ in $C_b(\Rd;\Rm)$.
\end{thm}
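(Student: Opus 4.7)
The plan is to build the $m$ systems as weak-$\ast$ sub-limits of the Cesàro averages of the kernels $p_{ij}(\cdot,x_0,\cdot)$ at a fixed base point $x_0\in\R^d$. For $\f\in C_b(\R^d;\R^m)$ and $t>0$, set $\g_t^\f(x):=t^{-1}\int_0^t (\T(s)\f)(x)\,ds$. Theorem \ref{parodontax}(i) gives $|\g_t^\f(x)|\le C_0\|\f\|_\infty\varphi(x)^{1/\gamma}$, while Proposition \ref{bundes} with $p=p_0$ yields $|J_x\T(s)\f(x)|\le Ce^{\sigma_{p_0}s}(1\vee s^{-1/2})\|\f\|_\infty$; the crucial sign $\sigma_{p_0}<0$ (Hypothesis \ref{hyp_base}(iv)) makes $s\mapsto e^{\sigma_{p_0}s}(1\vee s^{-1/2})$ integrable on $(0,+\infty)$, so
\[
|J_x\g_t^\f(x)|\le\frac{1}{t}\int_0^t|J_x\T(s)\f(x)|\,ds\le\frac{K\|\f\|_\infty}{t},\qquad t>0,\ x\in\R^d,
\]
with $K$ independent of $\f,x,t$.

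Consequently the family $\{\g_t^\f:t\ge 1\}$ is uniformly Lipschitz with Lipschitz constant vanishing as $t\to+\infty$, and by Arzelà--Ascoli every sequence $t_n\to+\infty$ admits a subsequence along which $\g_{t_n}^\f$ converges locally uniformly to a function which, by the gradient decay, is forced to be a constant vector $\mathbf{c}(\f)\in\R^m$. Moreover, the identity
\[
\T(h)\g_t^\f-\g_t^\f=\frac{1}{t}\int_t^{t+h}\T(s)\f\,ds-\frac{1}{t}\int_0^h\T(s)\f\,ds
\]
and the growth estimate of Theorem \ref{parodontax}(i) show that $\T(h)\g_t^\f-\g_t^\f\to 0$ locally uniformly (both sides are $O(1/t)$); since $(\g_{t_n}^\f)$ is bounded in $C_{1/\gamma}(\R^d;\R^m)$, Proposition \ref{carovana} lets us commute $\T(h)$ with the locally uniform limit and obtain $\T(h)\mathbf{c}(\f)=\mathbf{c}(\f)$, coherent with $\A\mathbf{c}(\f)=0$.

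Fix now $i\in\{1,\ldots,m\}$. Using the integral representation \eqref{domanda},
\[
(\g_t^\f(x_0))_i=\sum_{j=1}^m\int_{\R^d}f_j\,d\nu_{t,j}^i,\qquad \nu_{t,j}^i:=\frac{1}{t}\int_0^t p_{ij}(s,x_0,\cdot)\,ds.
\]
Testing Theorem \ref{parodontax}(i) against $\f=f_j\e_j$ with $\|f_j\|_\infty\le 1$ gives the uniform total-variation bound $\|\nu_{t,j}^i\|_{TV}\le C_0\varphi(x_0)^{1/\gamma}$. Since $1/\gamma\le\min\{1-1/\gamma,1/p_0\}$ (as $\gamma>2\ge p_0$), estimate \eqref{giroitalia-1} with $\delta=1/\gamma$ tested against $\f=g\varphi^{1/\gamma}\e_j$, $|g|\le 1$, combined with the Lyapunov inequality \eqref{addio-00}, yields the uniform moment bound $\int\varphi^{1/\gamma}\,d|\nu_{t,j}^i|\le C_2\varphi(x_0)^{2/\gamma}$. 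Since $\varphi\to+\infty$ at infinity, this forces tightness of $\{|\nu_{t,j}^i|\}_{t>0}$; applying Prokhorov to the positive and negative parts of each $\nu_{t,j}^i$ (Hahn decomposition) and a finite diagonal extraction over $j=1,\ldots,m$ deliver finite signed Borel measures $\mu_j^i$ and a common subsequence $t_n\to+\infty$ along which $\nu_{t_n,j}^i\rightharpoonup\mu_j^i$ in duality with $C_b(\R^d)$. Passing to the limit in the representation above produces $\mathbf{c}_i(\f)=\sum_j\int f_j\,d\mu_j^i$ for every $\f\in C_b(\R^d;\R^m)$. Applying the same argument to $\T(h)\f\in C_b(\R^d;\R^m)$ (cf.~\eqref{rettore}) and using $\g_t^{\T(h)\f}(x_0)-\g_t^\f(x_0)=O(1/t)$ gives $\mathbf{c}_i(\T(h)\f)=\mathbf{c}_i(\f)$, which is exactly \eqref{caricatore} for the system $\{\mu_j^i\}_{j=1}^m$. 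Letting $i$ range in $\{1,\ldots,m\}$ produces the desired $m$ systems.

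The technical heart of the proof is the moment bound and the ensuing tightness in the last paragraph: $TV$-boundedness alone would only place weak-$\ast$ sub-limits of $(\nu_{t_n,j}^i)$ in the dual of $C_0(\R^d)$, so without the Lyapunov moment bound provided by Hypothesis \ref{hyp_base}(v) (through \eqref{giroitalia-1} and \eqref{addio-00}) the extraction could yield finitely additive \emph{measures at infinity} instead of genuine Borel measures. The sign condition $\sigma_{p_0}<0$ in Hypothesis \ref{hyp_base}(iv) is the other essential ingredient, driving the exponential decay of $|J_x\T(s)\f|$ and hence the reduction of the subsequential limits to constant vectors.
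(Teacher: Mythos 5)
Your proof is correct, and while it shares the paper's overall strategy (Ces\`aro averages of $\T(t)\f$, gradient estimates to force constancy, and the $O(1/t)$ shift identity for invariance), it differs from the paper's execution in two substantive ways. First, you obtain constancy of the sub-limits directly from the bound $|J_x\g_t^{\f}|\le K\|\f\|_\infty/t$, gotten by integrating the exponentially decaying estimate of Proposition \ref{bundes}; the paper instead first proves that the limit $\g$ is a fixed point, $\g=\T(k)\g$, and then lets $k\to+\infty$ in $|J\g|=|J_x\T(k)\g|\le ce^{\sigma_2 k}(a_*/c_*+\varphi)^{1/2}$. Your route is more direct and avoids the delicate mean-ergodic argument (the decomposition $\f-\g=\lim(I-\T(1))\bm\zeta_k$) that the paper uses to upgrade subsequential to full convergence. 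Second, you produce the measures by a tightness/Prokhorov argument on the averaged kernels $\nu^i_{t,j}=t^{-1}\int_0^t p_{ij}(s,x_0,\cdot)\,ds$, using the uniform moment bound $\int\varphi^{1/\gamma}\,d|\nu^i_{t,j}|\le C_2\varphi(x_0)^{2/\gamma}$ from \eqref{giroitalia-1} with $\delta=1/\gamma$ (admissible since $\gamma>2\ge p_0$), so the weak-$\ast$ limits are taken directly in duality with $C_b$; the paper instead applies the Riesz representation theorem to the functionals ${\mathcal M}_i$ on $C_0(\Rd;\Rm)$ and then extends identity \eqref{montezuma} from $C_0$ to $C_b$ by approximation. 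Both approaches must rule out mass escaping to infinity, and both do so through the Lyapunov function; you simply front-load that concern into tightness, which is arguably cleaner. The one trade-off worth flagging: because you only extract a common convergent subsequence $t_n$, you establish existence (which is all Theorem \ref{succhinopera} asks for, and your invariance argument via $\g_t^{\T(h)\f}(x_0)-\g_t^{\f}(x_0)=O(1/t)$ is sound along that subsequence), but you do not prove the full convergence \eqref{marito} of $\boldsymbol{\mathcal P}_t\f$ as $t\to+\infty$, which the paper's Steps 1--2 deliver and which is reused later in Proposition \ref{giampaolo-co}, Theorem \ref{chimica} and Theorem \ref{venditti}; to recover it you would still need a uniqueness-of-sublimits argument of the paper's type.
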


\begin{proof}
We split the proof into several steps.

{\em Step 1.} Here, we introduce the sequence $(\boldsymbol{\mathcal R}_n)$ of bounded linear operators in $C_b(\Rd;\Rm)$ defined by
\begin{eqnarray*}
(\boldsymbol{\mathcal R}_n\f)(x)=\frac{1}{n}\sum_{k=0}^{n-1}(\T(k)\f)(x),\qquad\;\,x\in\Rd,\;\,\f\in C_b(\Rd;\Rm),
\end{eqnarray*}
and prove that, for any $\f\in C_b(\Rd;\Rm)$, $\boldsymbol{\mathcal R}_n\f$ converges locally uniformly in $\Rd$ as $n$ tends to $+\infty$ to a constant function.
We fix any such function $\f$ and we first show that a suitable subsequence
of $(\boldsymbol{\mathcal R}_n\f)$ converges locally uniformly in $\Rd$. For this purpose,
we observe that $\boldsymbol{\mathcal R}_n\f=
n^{-1}\f+\T(1)((1-n^{-1})\boldsymbol{\mathcal R}_{n-1}\f)$ in $\Rd$.
By Theorem \ref{parodontax}(i), the sequence $(\boldsymbol{\mathcal R}_{n-1}\f)$ is bounded in $C_{\gamma^{-1}}(\Rd;\Rm)$. Hence,
we can determine a subsequence $(\T(1)((1-n_k^{-1})\boldsymbol{\mathcal R}_{n_k-1}\f))$, which
converges locally uniformly in $\Rd$ to a function $\g\in C_{\gamma^{-1}}(\Rd;\Rm)$. Indeed, Theorem \ref{parodontax}(i) and the interior Schauder estimates in Theorem \ref{thm-A2} show that
\begin{align*}
\|\T(1)((1-n^{-1})\boldsymbol{\mathcal R}_{n-1}\f)\|_{C^{2+\alpha}(B(0,R);\Rm)}\leq &c_R\|(1-n^{-1})\boldsymbol{\mathcal R}_{n-1}\f\|_{C_b(B(0,R+1);\Rm)}\\
\le &c_R\|\f\|_{\infty}\sup_{B(0,R+1)}\varphi^{\frac{1}{\gamma}}
\end{align*}
for any $R>0$. Thus, the Arzel\`a-Ascoli theorem and a compactness argument allow us to extract a subsequence of $(\boldsymbol{\mathcal R}_n\f)$ converging locally uniformly in $\Rd$ to a function $\g\in C_{\gamma^{-1}}(\Rd;\R^m)$.

To prove that all the sequence $(\boldsymbol{\mathcal R}_n\f)$ converges to $\g$ locally uniformly in $\Rd$, we observe that\footnote{The below limits
are all local uniform in $\Rd$.}
\begin{align*}
\f-\g=&\f-\lim_{k\to+\infty}\boldsymbol{\mathcal R}_{n_k}\f
=\lim_{k\to+\infty}
\frac{1}{n_k}\sum_{j=1}^{n_k-1}(I-(\T(1))^j)\f=\lim_{k\to+\infty}(I-\T(1))\bm\zeta_k,
\end{align*}
where $\bm\zeta_k=n_k^{-1}\sum_{j=1}^{n_k-1}\sum_{h=0}^{j-1}\T(h)\f$ ($k\in\N$) is a bounded and continuous function and the sequence $((I-\T(1))\zeta_k)$ is bounded in
$C_{\gamma^{-1}}(\Rd;\Rm)$.
Moreover, $\boldsymbol{\mathcal R}_n(I-\T(1))\bm\zeta_k=n^{-1}(\bm\zeta_k-\T(n)\bm\zeta_k)$
for any $k,n\in\N$. Combining the last two formulas, we can estimate
\begin{align*}
&\|\boldsymbol{\mathcal R}_n(\f-\g)\|_{C_b(B(0,r);\R^m)}\notag\\
\le &\|\boldsymbol{\mathcal R}_n[\f-\g-(I-\T(1))\bm\zeta_k]\|_{C_b(B(0,r);\R^m)}+\|\boldsymbol{\mathcal R}_n(I-\T(1))\bm\zeta_k\|_{C_b(B(0,r);\R^m)}\notag\\
\le& \|\T(\cdot)[\f-\g-(I-\T(1))\bm\zeta_k]\|_{C_b((0,+\infty)\times B(0,r);\R^m)}+
\frac{1}{n}\|\bm\zeta_k-\T(n)\bm\zeta_k\|_{C_b(B(0,r);\R^m)}
\end{align*}
for any $k,n\in\N$ and $r>0$. Now, letting first $n$ and then $k$ tend to $+\infty$, taking Proposition \ref{carovana} into account, from the above chain of inequalities we can infer that $\boldsymbol{\mathcal R}_n(\f-\g)$ vanishes locally uniformly in $\Rd$ as $n$ tends to $+\infty$.
The convergence of $\boldsymbol{\mathcal R}_n\g$ is easier to prove since $\boldsymbol{\mathcal R}_n\g=\g$ in $\Rd$ for any $n\in\N$. Indeed, since $\boldsymbol{\mathcal R}_{n_k}\f-\T(1)\boldsymbol{\mathcal R}_{n_k}\f= n_k^{-1}(\f-\T(n_k)\f)$ in $\Rd$,
letting $k$ tend to $+\infty$, the last side of the previous equality vanishes locally uniformly in $\Rd$. Moreover, since
$\boldsymbol{\mathcal R}_{n_k}\f$ converges to $\g$ locally uniformly in $\Rd$, by Proposition \ref{carovana} $\T(1)\boldsymbol{\mathcal R}_{n_k}\f$ converges to $\T(1)\g$, locally uniformly
in $\Rd$. Thus, we conclude that $\g=\T(1)\g$ in $\Rd$. Using the semigroup rule in Theorem \ref{parodontax}(ii), we deduce that $\T(k)\g=\g$ in $\Rd$ for any $k\in\N$, which implies the claim.

Finally, we prove that $\g$ is a constant function. For this purpose, we approximate $\g$ locally uniformly on $\Rd$ by a sequence $(\g_n)$ of bounded and continuous functions such that $|\g_n|\le |\g|$ in $\Rd$ for any $n\in\N$.
Thanks to the interior Schauder estimates in Theorem \ref{thm-A2} and Theorem \ref{parodontax}(i) we conclude that the sequence $(\T(\cdot)\g_n)$
is bounded in $C^{1+\alpha/2,2+\alpha}({\mathcal K})$ for any compact set ${\mathcal K}\subset (0,+\infty)\times\Rd$. Hence,
up to a subsequence, $\T(\cdot)\g_n$ converges in $C^{1,2}(\mathcal K)$, for any ${\mathcal K}$ as above,
to a function $\boldsymbol\zeta\in C^{1+\alpha/2,2+\alpha}_{\rm loc}((0,+\infty)\times\Rd)$.
On the other hand, by Proposition \ref{carovana}, $\T(\cdot)\g_n$ converges to $\T(\cdot)\g$ uniformly in $(0,+\infty)\times B(0,R)$,
for any $R>0$. We conclude, in particular, that
$J_x\T(k)\g_n$ converges to $J_x\T(k)\g$ locally uniformly in $\Rd$, for any $k\in\N$.
We are almost done. Indeed, using Proposition \ref{bundes} we can now estimate
\begin{align*}
|J\g|^2
= & |J_x\T(k)\g|^2
=\lim_{n\to +\infty}|J_x\T(k)\g_n|^2\leq  ce^{2\sigma_2k}(1\vee k^{-1})T(k)|\g_n|^2\\
\leq & ce^{2\sigma_2k}(1\vee k^{-1})T(k)|\g|^2\le ce^{2\sigma_2k}\bigg (\frac{a_*}{c_*}+\varphi\bigg ),
\end{align*}
where the convergence is local uniformly in $\Rd$ and we have used \eqref{addio-00} in the last step
of the previous chain of inequalities. We have so shown, that
$\| |J\g| \|_{C(\overline{B(0,R)};\Rm)}\le c_Re^{\sigma_2k}$. Since $\sigma_2<0$, letting $k$
tend to $+\infty$, we conclude that $J\g\equiv 0$ on $B(0,R)$ and, hence, on $\Rd$. This shows that $\g$ is a constant function as claimed.

\emph{Step 2.} Here, we prove that there exist $m$ systems $\{\mu^i_j: i,j=1,\ldots,m\}$ ($i=1,\ldots,m$) of Radon measures such that
\begin{equation}
\lim_{t\to +\infty}(\boldsymbol{\mathcal P}_t\f)_i:=\lim_{t\to +\infty}\frac{1}{t}\int_0^t(\T(s)\f)_ids
=\sum_{j=1}^m\int_{\Rd}f_jd\mu^i_j,
\label{marito}
\end{equation}
locally uniformly in $\Rd$ for any $\f\in C_b(\Rd;\R^m)$ and $i=1,\ldots,m$.
For this purpose, we note that
$\boldsymbol{\mathcal P}_{t}\f=t^{-1}[t]\boldsymbol{\mathcal R}_{[t]}\boldsymbol{\mathcal P}_1\f+t^{-1}\{t\}\boldsymbol{\mathcal P}_{\{t\}}\T([t])\f$ in $\Rd$, for any $t>1$ and $\f\in C_b(\Rd;\Rm)$,
where $[t]$ and $\{t\}$ denote respectively the integer and the fractional part of $t$. Since $|\boldsymbol{\mathcal P}_{\{t\}}\T([t])\f|\le c\|\f\|_{\infty}\varphi^{1/\gamma}$ in $\Rd$, for any $t>0$, due to Theorem \ref{parodontax}(i), letting $t$ tend to $+\infty$ in the above estimate we obtain that $\boldsymbol{\mathcal P}_t\f$ converges locally uniformly on $\Rd$ for every $\f\in C_b(\Rd;\R^m)$ and, in view of Step 1, the limit $\boldsymbol{\mathcal P}_*\f$ is a constant function in $\Rd$.
Thus, it follows that $\boldsymbol{\mathcal P}_*\f=\sum_{i=1}^m({\mathcal M}_i\f)\e_i$
for any $\f\in C_b(\Rd;\Rm)$ and some linear operators ${\mathcal M}_j:C_b(\Rd;\Rm)\to\R$, $j=1,\ldots,m$. Note that these operators are bounded. Indeed, using \eqref{giroitalia}, we can estimate
\begin{eqnarray*}
|(\boldsymbol{\mathcal P}_t\f)(0)|\le \frac{1}{t}\int_0^t|(\T(s)\f)(0)|ds\le c\|\f\|_{\infty}(\varphi(0))^{\frac{1}{\gamma}},\qquad\;\,t>0.
\end{eqnarray*}
Since $(\boldsymbol{\mathcal P}_t\f)(0)$ converges to $\sum_{i=1}^m({\mathcal M}_i\f)\e_i$ as $t$ tends to $+\infty$, we conclude that
$|{\mathcal M}_i\f|\le c(\varphi(0))^{1/\gamma}\|\f\|_{\infty}$ for any $\f\in C_b(\Rd;\Rm)$ and $i=1,\ldots,m$. In particular, each operator ${\mathcal M}_i$ is an
element of $(C_0(\Rd;\Rm))'$ and the Riesz representation theorem shows that there exists a family $\{\mu^i_j: i,j=1,\ldots,m\}$ of finite Radon measures on $\Rd$ such that
\begin{equation}
{\mathcal M}_i\f=\sum_{j=1}^m \int_{\Rd}f_j d\mu^i_j,\qquad\;\,\f\in C_0(\Rd;\R^m).
\label{montezuma}
\end{equation}

To complete the proof of \eqref{marito}, we begin by observing that each operator
${\mathcal M}_j$ is well defined and bounded in $C_b(\Rd;\R^m)$. Moreover, if $\f\in C_b(\Rd;\R^m)$ then we can fix a bounded sequence $(\f_n)\subset C_0(\Rd; \Rm)$, converging to $\f$ locally uniformly in $\Rd$ as $n \to +\infty$, and (taking into account that $|\boldsymbol{\mathcal P}_t(\f_n-\f)|\le\sup_{t\ge 0}|\T(t)(\f_n-\f)|$) estimate
\begin{align*}
\bigg |\boldsymbol{\mathcal P}_t\f\!-\!\sum_{i=1}^m({\mathcal M}_i\f)\e_i\bigg |
\le & |\boldsymbol{\mathcal P}_t\f_n\!-\!\boldsymbol{\mathcal P}_*\f_n|\!+\!\sum_{i,j=1}^m\int_{\Rd}|f_{n,j}\!-\!f_j|d|\mu^i_j|\!+\!\sup_{t\ge 0}|\T(t)(\f_n\!-\!\f)|
\end{align*}
in $\Rd$, for any $t>0$ and $n\in\N$, letting first $t$ and then $n$ tend to $+\infty$, we conclude that
\eqref{montezuma} holds true also for any $\f\in C_b(\Rd;\Rm)$.

{\emph Step 3.} Now, we can complete the proof, showing that, for any $i=1,\ldots,m$, the family $\{\mu^i_j: j=1,\ldots,m\}$ is a system of invariant measures for $\T(t)$.
For this purpose, we fix $\f\in C_b(\Rd;\Rm)$, $\tau>0$, $x\in\Rd$, $i\in\{1,\ldots,m\}$ and observe that
\begin{align}
(\boldsymbol{\mathcal P}_t\T(\tau)\f)(x)=&\frac{1}{t}\int_0^t(\T(s)\T(\tau)\f)(x)ds=\frac{1}{t}\int_0^t(\T(s+\tau)\f)(x)ds\notag\\
=&(\boldsymbol{\mathcal P}_t\f)(x)+\frac{1}{t}\int_t^{t+\tau}(\T(s)\f)(x)ds
-\frac{1}{t}\int_0^{\tau}(\T(s)\f)(x)ds.
\label{invariant-proof}
\end{align}
By Theorem \ref{parodontax}(i), the second and third terms in the right-hand side of \eqref{invariant-proof} can be estimated from above by
$t^{-1}c\|\f\|_{\infty}(\varphi(x))^{1/\gamma}\tau$. Hence, letting $t$ tend to $+\infty$ in both sides of \eqref{invariant-proof}, we deduce that
$(\boldsymbol{\mathcal P}_*\T(\tau)\f)(x)=(\boldsymbol{\mathcal P}_*\f)(x)$ or, equivalently, that
\begin{eqnarray*}
\sum_{i,j=1}^m\bigg  (\int_{\Rd}(\T(s)\f)_jd\mu^i_j\bigg )\e_i=\sum_{i,j=1}^m\bigg (\int_{\Rd}\f_jd\mu^i_j\bigg )\e_i
\end{eqnarray*}
and the assertion follows at once.
\end{proof}

\subsection{Properties of systems of invariant measures}
\label{cmp}

To begin with, we observe that $\mu^i_j(\Rd)=\delta_{ij}$ for any $i,j=1,\ldots,m$, where $\delta_{ij}$ is the Kronecker delta.
Indeed, fix $i,j\in\{1,\ldots,m\}$ and set $\f :=\e_j $. Then, using the invariance property of the system
$\{\mu^i_j: j=1,\ldots,m\}$ we deduce that
\begin{align*}
\mu^i_j(\Rd)
= & \int_{\Rd}f_j\mu^i_j
= \sum_{k=1}^m\int_{\Rd}f_{k}d\mu^i_k
= \lim_{t\to +\infty}\frac1t\int_0^t (\T(s)\f)_i(x)ds=\delta_{ij},
\end{align*}
since $\T(\cdot)\f=\e_j$ in $[0,+\infty)\times\Rd$.

Next, we prove that the total variations of the measures $\mu^i_j$ are absolutely continuous with respect to the Lebesgue measure and
that the function $\varphi^{\gamma_0}$ (see Theorem \ref{parodontax}(ii)) is integrable with respect to
the measure $|\mu^i_j|$ for any $i,j=1,\ldots,m$.

\begin{prop}
\label{giampaolo-co}
Each measure $|\mu^i_j|$ is absolutely continuous with respect to the Lebesgue measure. Moreover, for any $i,j=1, \ldots, m$, $\varphi^{\gamma_0}\in L^1(\Rd,|\mu^i_j|)$ and $\|\varphi^{\gamma_0}\|_{L^1(\Rd;|\mu^i_j|)}\le C_2(\inf_{\Rd}\varphi)^{\gamma_0+1/\gamma}$, where $\gamma_0$ and $C_2$ are defined in the statement of Theorem
$\ref{parodontax}$.
\end{prop}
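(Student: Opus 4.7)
My first step is to exploit the invariance property directly on bounded Borel indicators. Fix a Borel set $A\subset\Rd$ of Lebesgue measure zero and $k\in\{1,\ldots,m\}$, and view $\chi_A\e_k$ as an element of $B_b(\Rd;\Rm)$. Since each measure $p_{jk}(s,x,\cdot)$ is absolutely continuous with respect to Lebesgue, the integral representation \eqref{domanda} (extended to $B_b$ as recalled after Proposition \ref{lem-2.1}) gives $(\T(s)(\chi_A\e_k))_j(x)=p_{jk}(s,x,A)=0$ for every $s>0$ and $x\in\Rd$. As the invariance identity \eqref{caricatore} is known to extend from $C_b$ to $B_b$, I obtain
$$\mu^i_k(A)=\sum_{j=1}^m\int_{\Rd}(\chi_A\e_k)_j\,d\mu^i_j=\sum_{j=1}^m\int_{\Rd}(\T(s)(\chi_A\e_k))_j\,d\mu^i_j=0.$$
Thus $\mu^i_k$, and therefore $|\mu^i_k|$, does not charge any Lebesgue null set.

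\textbf{Integrability of $\varphi^{\gamma_0}$.} For the second assertion, the plan is to test the defining property \eqref{marito} against a truncation of $\varphi^{\gamma_0}$ paired with the sign of $\mu^i_k$, and to use \eqref{giroitalia-1} for a uniform-in-time bound. Set $\varphi_n:=\varphi\wedge n\in C_b(\Rd)$ and let $\sigma:=\chi_{P}-\chi_{N}$ be the Borel sign associated with a Hahn decomposition $\Rd=P\cup N$ of $\mu^i_k$. Using the absolute continuity just established, I can pick a sequence $\sigma_m\in C_b(\Rd)$ with $|\sigma_m|\le 1$ (e.g.\ by mollifying $\sigma$) that converges to $\sigma$ almost everywhere with respect to both the Lebesgue measure and $|\mu^i_k|$. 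Setting $\f_{n,m}:=\varphi_n^{\gamma_0}\sigma_m\e_k\in C_b(\Rd;\Rm)$, one has $\|\f_{n,m}\|_{B_{\gamma_0}(\Rd;\Rm)}\le 1$, so \eqref{giroitalia-1} with $\delta=\gamma_0$ yields $|(\T(s)\f_{n,m})(x)|\le C_2\varphi(x)^{\gamma_0+1/\gamma}$, and consequently
$$|(\boldsymbol{\mathcal P}_t\f_{n,m})_i(x_0)|\le C_2\,\varphi(x_0)^{\gamma_0+1/\gamma}$$
uniformly in $t>0$, $n,m\in\N$ and for every $x_0\in\Rd$.

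Letting $t\to+\infty$ in this estimate via \eqref{marito} yields $|\int_{\Rd}\varphi_n^{\gamma_0}\sigma_m\,d\mu^i_k|\le C_2\,\varphi(x_0)^{\gamma_0+1/\gamma}$. Sending $m\to+\infty$ by dominated convergence on the finite measure $\mu^i_k$ (with dominating function $\varphi_n^{\gamma_0}$) converts the integrand into $\varphi_n^{\gamma_0}\sigma\cdot\sigma=\varphi_n^{\gamma_0}$ against $|\mu^i_k|$, so $\int_{\Rd}\varphi_n^{\gamma_0}\,d|\mu^i_k|\le C_2\,\varphi(x_0)^{\gamma_0+1/\gamma}$. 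Monotone convergence in $n$ and minimization over $x_0$ then deliver the advertised bound.

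\textbf{Main obstacle.} The delicate point is the regularization step in the integrability argument. The natural test function for recovering $|\mu^i_k|$ is $\varphi^{\gamma_0}\sigma\e_k$, which is neither bounded nor continuous and cannot be inserted directly into \eqref{marito} or \eqref{giroitalia-1}; the truncation $\varphi_n^{\gamma_0}$ is harmless, but replacing the Borel sign $\sigma$ by a continuous approximation $\sigma_m$ requires the absolute continuity of $|\mu^i_k|$ proved in the first step, so that convergence almost everywhere with respect to Lebesgue automatically becomes convergence almost everywhere with respect to $|\mu^i_k|$. This explains why the two assertions must be proved in this specific order.
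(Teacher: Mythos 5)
Your proof is correct, and its first half takes a genuinely different route from the paper's. For the absolute continuity, the paper introduces the Ces\`aro averages $r_{ij}(t,x_0,dy)$ of \eqref{r-ij}, proves their weak$^*$ convergence to $\mu^i_j$ together with the convergence of the total variations on open and closed sets, and then uses inner regularity to transfer the absolute continuity of the measures $p_{ij}(t,x_0,dy)$ to the limit. Your argument short-circuits all of this: since $(\T(s)(\chi_A\e_k))_j(\cdot)=p_{jk}(s,\cdot,A)\equiv 0$ whenever $A$ is Lebesgue-null, the invariance identity \eqref{caricatore} in its $B_b$ form (which the paper establishes right after the definition and which does not presuppose any regularity of the $\mu^i_j$) gives $\mu^i_k(A)=0$ directly; applying this to every Borel subset of $A$ yields $|\mu^i_k|(A)=0$. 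This is shorter and cleaner, at the price of leaning on the extension of \eqref{caricatore} to $B_b(\Rd;\Rm)$ rather than on the convergence of the averages. For the integrability of $\varphi^{\gamma_0}$ your argument is essentially the paper's: both test the ergodic limit against a truncation of $\varphi^{\gamma_0}$ multiplied by (an approximation of) the polar sign of $\mu^i_j$, bound the averages uniformly in $t$ through \eqref{giroitalia-1} with $\delta=\gamma_0$ evaluated at the minimum point of $\varphi$, and conclude by monotone convergence or Fatou. The only difference is technical: the paper first extends \eqref{marito} to bounded Borel data and inserts $\vartheta_n\varphi^{\gamma_0}g_{ij}^{-1}\e_j$ directly, whereas you keep the test functions continuous by mollifying the Hahn sign $\sigma$ and pass to the limit in $m$ afterwards, using the absolute continuity from the first step to upgrade Lebesgue-a.e.\ convergence of $\sigma_m$ to $|\mu^i_k|$-a.e.\ convergence. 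Both devices are legitimate and rest on the same ordering of the two assertions, which you correctly identify as the delicate point.
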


\begin{proof}
We fix $i,j\in\{1,\ldots,m\}$ and split the proof into two steps. In the first one we prove the absolutely continuity of $|\mu^i_j|$ with respect to
the Lebesgue measure. Then, in Step 2, we prove that the function $\varphi^{\gamma_0}$  is in $L^1(\Rd,|\mu^i_j|)$.

{\em Step 1.} We denote by $x_0\in\Rd$ the point where $\varphi$ attains its minimum value and introduce the family of measures $\{r_{ij}(t,x_0,dy): t>0\}$, defined by
\begin{align}
r_{ij}(t,x_0,B):=\frac{1}{t}\int_0^tp_{ij}(s,x_0,B)ds
\label{r-ij}
\end{align}
for any $t>0$, any Borel set $B\subset\Rd$ and $i,j=1,\ldots,m$. Note that $p_{ij}(s,x_0,B)=(\T(s)(f\e_j))_i(x_0)$ for any $s$, $i$, $j$ and $B$ as above. Since
the semigroup $\T(t)$ is strong Feller, the function $p_{ij}(\cdot,x_0,B)$ is continuous in $(0,+\infty)$ and bounded, due to estimate
\eqref{giroitalia-1}. Hence, the integral in the right-hand side of \eqref{r-ij} is well defined. Moreover, each $r_{ij}(t,x_0,dy)$ is a finite measure. Indeed,
we can write
\begin{eqnarray*}
|p_{ij}|(s,x_0,\Rd)=\sup\bigg\{\int_{\Rd}\zeta p_{ij}(s,x_0,dy): \zeta\in C_c(\Rd),\,\|\zeta\|_{\infty}\le 1\bigg\}
\end{eqnarray*}
(see e.g., \cite[Proposition 1.43]{AFP}) and, again by \eqref{giroitalia-1},
the function $s\mapsto\int_{\Rd}\zeta p_{ij}(s,x_0,dy)=(\T(s)(\zeta {\bf e}_j))_i(x_0)$ is bounded in $(0,+\infty)$. Therefore,
\begin{eqnarray*}
|r_{ij}|(t,x_0,\Rd)\le\frac{1}{t}\int_0^t|p_{ij}|(s,x_0,\Rd)ds\le C_2(\varphi(x_0))^{\frac{1}{\gamma}},\qquad\;\,t>0.
\end{eqnarray*}
In view of Theorem \ref{succhinopera}, for any $t>0$ and $f\in C_b(\Rd)$ it holds that
\begin{align*}
\int_{\Rd}f r_{ij}(t,x_0,dy)=\frac{1}{t}\int_0^t(\T(s)f\e_j)_i(x_0)ds
\end{align*}
and the right-hand side of the previous formula converges to $\int_{\Rd}fd\mu^i_j$ as $t$ tends to $+\infty$.
Hence, $r_{ij}(t,x_0,dy)$ weakly$^*$ converges to $\mu^i_j$ as $t$ tends to $\infty$.

Now, we claim that $|r_{ij}|(t,x_0,\Omega)$ converges to $|\mu^i_j|(\Omega)$ as $t$ tends to $+\infty$, for any open set $\Omega\subset\Rd$.
For this purpose, we fix a sequence $(t_n)$ diverging to $+\infty$ such that $|r_{ij}|(t_n,x_0,\Omega)$ admits limit as $n$ tends to $+\infty$. Again by
\cite[Proposition 1.43]{AFP}, we can determine a sequence $(\zeta_n)\subset C_c(\Omega)$ with $\|\zeta_n\|_\infty\le 1$ such that
\begin{align}
|r_{ij}|(t_n,x_0,\Omega) \leq\int_{\Rd}\zeta_n(y)r_{ij}(t_n,x_0,dy)+\frac{1}{n}=
\frac{1}{t_n}\int_0^{t_n}(\T(s)(\zeta_n\e_j))_i(x_0)ds+\frac{1}{n}
\label{borrini}
\end{align}
for any $n\in\N$. Now, we observe that if $t_n>1$ then we can write
\begin{align}
\frac{1}{t_n}\int_0^{t_n}(\T(s)(\zeta_n\e_j))_i(x_0)ds=&\frac{1}{t_n}\int_0^1(\T(s)(\zeta_n\e_j))_i(x_0)ds\notag\\
&+\frac{1}{t_n}\int_0^{t_n}(\T(s)\T(1)(\zeta_n\e_j))_i(x_0)ds\notag\\
&-\frac{1}{t_n}\int_{t_n-1}^{t_n}(\T(s+1)(\zeta_n\e_j))_i(x_0)ds.
\label{polidoro}
\end{align}
Since $\|\T(s)(\zeta_n\e_j)\|_{\infty}\le e^{\beta}\|\zeta_n\|_{\infty}\le e^{\beta}$ for any $s\in [0,1]$, the first term in the right-hand side
of \eqref{polidoro} vanishes as $n$ tends to $+\infty$. Similarly, $|(\T(s+1)(\zeta_n\e_j))_i(x_0)|\le C\sqrt{\varphi(x_0)}$ for any $s>0$, by \eqref{giroitalia}.
Hence, also the third term in the right-hand side of \eqref{polidoro} vanishes as $n$ tends to $+\infty$.
As far as the second term in the right-hand side of \eqref{polidoro} is concerned, we observe that the interior Schauder estimates in Theorem \ref{thm-A2} show that there exists an increasing sequence $(n_k)\subset\N$ such that $\T(1)(\zeta_{n_k}\e_j)$ converges
locally uniformly in $\Rd$ to bounded and continuous function $\g$. This result and Proposition \ref{carovana} imply that $(\T(s)(\T(1)(\zeta_{n_k}\e_j)-\g))_i(x_0)$
converges to $0$ uniformly in $(0,+\infty)$ as $k$ tends to $+\infty$.
Therefore,
\begin{eqnarray*}
\lim_{k\to +\infty}\frac{1}{t_{n_k}}\int_0^{t_{n_k}}(\T(s)(\T(1)(\zeta_{n_k}\e_j)-\g))_i(x_0)ds=0
\end{eqnarray*}
and from \eqref{polidoro} we conclude that
\begin{align*}
\lim_{k\to+\infty}\frac{1}{t_{n_k}}\int_0^{t_{n_k}}(\T(s)(\zeta_{n_k}\e_j))_i(x_0)ds=&
\lim_{k\to+\infty}\frac{1}{t_{n_k}}\int_0^{t_{n_k}}(\T(s)\g)_i(x_0)ds\\
=&\sum_{j=1}^m\int_{\Rd}g_jd\mu^i_j.
\end{align*}
We claim that $\sum_{j=1}^m\int_{\Rd}g_jd\mu^i_j\le |\mu^i_j|(\Omega)$. For this purpose we use the invariance property of the family $\{\mu^i_j: j=1,\ldots,m\}$ to write
\begin{eqnarray*}
\sum_{h=1}^m\int_{\Rd}(\T(1)(\zeta_{n_k}\e_j))_hd\mu_h^i=\int_{\Rd}\zeta_{n_k}d\mu^i_j
\end{eqnarray*}
for any $k\in\N$. Hence, by dominated convergence we obtain
\begin{align*}
\sum_{h=1}^m\int_{\Rd}g_hd\mu_h^i=&\lim_{k\to +\infty}\sum_{h=1}^m\int_{\Rd}(\T(1)(\zeta_{n_k}\e_j))_hd\mu_h^i
=\lim_{k\to +\infty}\int_{\Rd}\zeta_{n_k}d\mu^i_j\\
\le &\limsup_{k\to +\infty} \int_{\Rd}|\zeta_{n_k}|d|\mu^i_j|\le |\mu^i_j|(\Omega).
\end{align*}

Now, we are almost done. Indeed, writing \eqref{borrini} with $n_k$ replacing $n$ and letting $k$ tend to $+\infty$, we deduce that
\begin{align*}
\lim_{n\to +\infty}|r_{ij}|(t_n,x_0,\Omega)
=\lim_{k\to +\infty}|r_{ij}|(t_{n_k},x_0,\Omega)\le |\mu^i_j|(\Omega).
\end{align*}
The arbitrariness of the sequence $(t_n)$ yields that $\limsup_{t\to +\infty}|r_{ij}|(t,x_0,\Omega)\le |\mu^i_j|(\Omega)$.
On the other hand, $|\mu^i_j|(\Omega)\le\liminf_{t\to +\infty}|r_{ij}|(t,x_0,\Omega)$. Indeed,
since $r_{ij}(t,x_0,dy)$ weakly$^*$ converges to $\mu^i_j$ as $t$ tends to $+\infty$, we can write
\begin{eqnarray*}
\int_{\Rd}\zeta d\mu^i_j=\lim_{t\to +\infty}\int_{\Rd}\zeta r_{ij}(t,x_0,dy)\le
\liminf_{t\to +\infty}|r_{ij}|(t,x_0,\Omega),
\end{eqnarray*}
due to the fact that $\int_{\Rd}\zeta r_{ij}(t,x_0,dy)\le |r_{ij}|(t,x_0,\Omega)$ for any $t>0$. We have so proved that
$\limsup_{t\to +\infty}|r_{ij}|(t,x_0,\Omega)\le |\mu^i_j|(\Omega)\le \liminf_{t\to +\infty}|r_{ij}|(t,x_0,\Omega)$
i.e., $|r_{ij}|(t,x_0,\Omega)$ converges to $|\mu^i_j|(\Omega)$ as $t\to +\infty$.

It is now straightforward to show that $|r_{ij}|(t,x_0,C)$ converges to $|\mu^i_j|(C)$ as $t$ tends to $+\infty$ also when $C$ is a closed set.

Let us prove that each measure $\mu_j^i$ is absolutely continuous with respect to the Lebesgue measure. For this purpose, we fix a Borel set $B\subset \Rd$ with null Lebesgue measure and, for any $\varepsilon>0$, we denote by $K_\varepsilon\subset B$ a compact set such that
$|\mu^i_j|(B\setminus K_{\varepsilon})\leq \varepsilon$ (see e.g., \cite[Theorem 2.8]{maggi}).
Since each measure $p_{ij}(t,0,dy)$ is absolutely continuous with respect to the Lebesgue measure
(see \cite[Theorem 3.3]{AALT}), $r_{ij}(t,x_0,dy)$ is absolutely continuous with respect to the Lebesgue measure, as well, for any $t>0$.
Hence, $|r_{ij}|(t,x_0,K_\varepsilon)=0$ for any $t>0$ and $|\mu^i_j|(K_\varepsilon)=0$. Splitting $B$ into the union of $K_{\varepsilon}$ and $B\setminus K_{\varepsilon}$, we thus conclude that
$|\mu^i_j|(B)\le \varepsilon$ and the arbitrariness of $\varepsilon>0$ shows that $|\mu^i_j|(B)=0$ and we are done.

{\em Step 2.} To begin with, we claim that \eqref{marito} can be extended to any bounded Borel measurable function $\f:\Rd\to\Rm$.
For this purpose, we approximate any such function $\f$ by a bounded sequence $(\f_n)\subset C_b(\Rd;\Rm)$ which converges to $\f$ almost everywhere in $\Rd$. By the proof of Theorem \ref{parodontax},
$(\T(\cdot)\f_n)$ converges to $\T(\cdot)\f$ pointwise in $\Rd$. Since the measures $|\mu_j^i|$ are absolutely continuous with respect to the Lebesgue measure,
writing \eqref{caricatore} with $\f$ and $\mu_j$ being replaced by $\f_n$ and $\mu_j^i$, respectively, and letting $n$ tend to $+\infty$ (taking \eqref{rettore} into account), we get \eqref{caricatore} in its full generality.
Now, as in \eqref{polidoro} we write
\begin{align*}
\frac{1}{t}\int_0^t(\T(s)\f)_i(x_0)ds=o(1)+\frac{1}{t}\int_0^t(\T(s)\T(1)\f)_i(x_0)ds
\end{align*}
as $t$ tends to $+\infty$. The strong Feller property of the semigroup $\T(t)$ and \eqref{marito} yield that
\begin{align*}
\lim_{t \to +\infty}\frac{1}{t}\int_0^t(\T(s)\f)_i(x_0)ds= \sum_{j=1}^m \int_{\Rd}(\T(1)\f)_j d\mu^i_j= \sum_{j=1}^m \int_{\Rd}f_j d\mu^i_j
\end{align*}
which proves the claim.

By Riesz's theorem (see e.g., \cite[Theorem 4.7]{maggi}), there exists a measurable function $g_{ij}$ such that $|g_{ij}|=1$ everywhere in $\Rd$ such that
\begin{eqnarray*}
\int_{\Rd}f d\mu^i_j=\int_{\Rd}f g_{ij}d|\mu^i_j|,\qquad\;\,f\in C_c(\Rd).
\end{eqnarray*}
Since $|\mu^i_j|$ is absolutely continuous with respect the Lebesgue measure, using the dominated convergence theorem we can extend the above equality to any $f\in B_b(\Rd)$. Equivalently, we can write
\begin{eqnarray*}
\int_{\Rd}\frac{f}{g_{ij}}d\mu^i_j=\int_{\Rd}f d|\mu^i_j|,\qquad\;\,f\in B_b(\Rd).
\end{eqnarray*}
From all above and \eqref{giroitalia-1}, we deduce that
\begin{align*}
\int_{\Rd}\vartheta_n \varphi^{\gamma_0} d|\mu^i_j|=&\int_{\Rd}\frac{\vartheta_n\varphi^{\gamma_0}}{g_{ij}}d\mu^i_j\\
=&\lim_{t\to +\infty}\frac{1}{t}\int_0^t\bigg (\T(s)\bigg (\frac{\vartheta_n\varphi^{\gamma_0}}{g_{ij}}\e_j\bigg )\bigg )_i(x_0)ds\\
\le &\limsup_{t\to +\infty}\frac{1}{t}\int_0^t\bigg |\bigg (\T(s)\bigg (\frac{\vartheta_n\varphi^{\gamma_0}}{g_{ij}}\e_j\bigg )\bigg )_i(x_0)\bigg |ds
\le C_2(\varphi(x_0))^{\gamma_0+\frac{1}{\gamma}},
\end{align*}
where $(\vartheta_n)$ is a standard sequence of cut-off functions. Thus, Fatou lemma yields the assertion.
This concludes the proof.
\end{proof}

\medskip

As an important consequence of the previous proposition we can prove the following characterization of the evolution systems of measures $\{\mu_j: j=1,\ldots,m\}$
 such that $\varphi^{\gamma_0}\in L^1(\Rd,|\mu_j|)$ for any $j=1,\ldots,m$.

\begin{thm}
\label{chimica}
Let $\{\mu_j: j=1,\ldots,m\}$ be a family of Borel measures such that $\varphi^{\gamma_0}\in L^1(\Rd,|\mu_j|)$ for any $j=1,\ldots,m$. Then $\{\mu_j: j=1,\ldots,m\}$ is a system of invariant measures for $\T(t)$ if and only if there exist real constants $c_1,\ldots,c_m$ such that
\begin{align}
\label{marione}
\mu_j=\sum_{i=1}^mc_i\mu^i_j, \qquad j=1,\ldots,m.
\end{align}
\end{thm}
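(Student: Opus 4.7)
\medskip

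\textbf{Plan of proof.} The implication ``$\Leftarrow$'' is essentially free: the invariance identity \eqref{caricatore} is linear in the measures $\mu_j$, and by Proposition~\ref{giampaolo-co} each of the $m$ systems $\{\mu_j^i\}_j$ satisfies $\varphi^{\gamma_0}\in L^1(\Rd,|\mu_j^i|)$, so any linear combination $\mu_j=\sum_{i=1}^m c_i\mu_j^i$ lies in the allowed class and inherits invariance. Thus the real content is ``$\Rightarrow$'', where the goal is to identify the constants: I claim $c_i=\mu_i(\Rd)$.

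\smallskip

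First I would fix $\f\in C_b(\Rd;\Rm)$ and exploit the invariance at each time $s>0$:
\begin{equation*}
\sum_{j=1}^m\int_{\Rd}(\T(s)\f)_j\,d\mu_j=\sum_{j=1}^m\int_{\Rd}f_j\,d\mu_j.
\end{equation*}
By Theorem~\ref{parodontax}(i), $|(\T(s)\f)(x)|\le C_0\|\f\|_{\infty}\varphi(x)^{1/\gamma}$; since $\gamma>2$ and $p_0\in(1,2]$ give $\gamma_0>1/\gamma$, and $\varphi\ge 1$, the dominating function $\varphi^{1/\gamma}$ is bounded by $\varphi^{\gamma_0}$ and hence lies in $L^1(\Rd,|\mu_j|)$. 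Consequently Fubini applies, so I may average in $s\in(0,t)$ and rewrite the identity as
\begin{equation*}
\sum_{j=1}^m\int_{\Rd}(\boldsymbol{\mathcal P}_t\f)_j\,d\mu_j=\sum_{j=1}^m\int_{\Rd}f_j\,d\mu_j,\qquad t>0.
\end{equation*}

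\smallskip

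Next I would send $t\to +\infty$. By Step~2 of the proof of Theorem~\ref{succhinopera}, $(\boldsymbol{\mathcal P}_t\f)_j(x)$ converges pointwise (in fact locally uniformly) to the constant $L_j:=\sum_{i=1}^m\int_{\Rd}f_i\,d\mu_i^j$, and the same majorant $C_0\|\f\|_{\infty}\varphi^{1/\gamma}$ controls each $|\boldsymbol{\mathcal P}_t\f|$. Dominated convergence then gives
\begin{equation*}
\sum_{j=1}^m\mu_j(\Rd)\,L_j=\sum_{j=1}^m\int_{\Rd}f_j\,d\mu_j,
\end{equation*}
which, after unpacking $L_j$ and exchanging the order of summation, reads
\begin{equation*}
\sum_{i=1}^m\int_{\Rd}f_i\,d\Bigl(\sum_{j=1}^m\mu_j(\Rd)\,\mu_i^j\Bigr)=\sum_{i=1}^m\int_{\Rd}f_i\,d\mu_i.
\end{equation*}
Specializing to $\f=f\e_k$ with $f\in C_b(\Rd)$ arbitrary forces $\mu_k=\sum_{j=1}^m\mu_j(\Rd)\mu_k^j$ for each $k$, by the Riesz representation of finite Borel measures. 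This proves \eqref{marione} with $c_i=\mu_i(\Rd)$.

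\smallskip

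The only delicate point is the legitimacy of passing to the limit under the integral sign, since a priori $\T(t)\f$ need not be bounded in $t$; this is where the hypothesis $\varphi^{\gamma_0}\in L^1(\Rd,|\mu_j|)$ is essential, combined with the growth bound \eqref{giroitalia} and the comparison $\varphi^{1/\gamma}\le\varphi^{\gamma_0}$ on $\{\varphi\ge 1\}=\Rd$. Once that domination is in place, Fubini and dominated convergence are routine, and the rest is algebra.
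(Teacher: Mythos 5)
Your proposal is correct and follows essentially the same route as the paper: integrate the invariance identity in time to pass to the Ces\`aro averages $\boldsymbol{\mathcal P}_t\f$, dominate them by $C_0\|\f\|_\infty\varphi^{1/\gamma}\le C_0\|\f\|_\infty\varphi^{\gamma_0}$ using Theorem \ref{parodontax}(i), let $t\to+\infty$ via dominated convergence and the limit from Theorem \ref{succhinopera}, and identify $c_i=\mu_i(\Rd)$ by testing with $\f=f\e_k$. The only (harmless) addition is your explicit check, via Proposition \ref{giampaolo-co}, that linear combinations of the $\mu^i_j$ stay in the admissible class for the converse direction.
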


\begin{proof}
To begin with, we observe that if the measures $\mu_j$ ($j=1,\ldots,m$) are defined by \eqref{marione} for some real constants $c_i$ $(i=1,\ldots,m)$, then
$\{\mu_j: j=1,\ldots,m\}$ is a system of invariant measures of $\T(t)$. Indeed, for any $\f\in C_b(\Rd;\Rm)$ and $t\geq 0$
\begin{align*}
\sum_{j=1}^m \int_{\Rd}f_jd\mu_j
=  \sum_{i,j=1}^mc_i\int_{\Rd}f_j d\mu^i_j
=  \sum_{i,j=1}^mc_i\int_{\Rd}(\T(t)f)_jd\mu^i_j
= \sum_{j=1}^m\int_{\Rd}(\T(t)f)_jd\mu_j.
\end{align*}

Let us now suppose that $\{\mu_j: j=1,\ldots,m\}$ is a system of invariant measures of $\T(t)$. Then,
\begin{align}
\label{programma}
\sum_{j=1}^m\int_{\Rd}f_jd\mu_j=\sum_{j=1}^m\int_{\Rd}(\T(t)\f)_jd\mu_j,\qquad\;\,t>0,\;\,\f\in C_b(\Rd;\Rm).
\end{align}
Integrating both the sides of \eqref{programma} between $0$ and $t$ and then dividing by $t$, we get
\begin{align}
\label{programma-1}
\sum_{j=1}^m\int_{\Rd}f_jd\mu_j=\sum_{j=1}^m\int_{\Rd}(\bm{\mathcal{{P}}}_t\f)_jd\mu_j,
\end{align}
where the operator $\bm{\mathcal P}_t$ has been introduced in \eqref{marito}. Since $\gamma_0>1/\gamma$, by Theorem \ref{parodontax}(i), we
can estimate $|(\bm{\mathcal P}_t\f)(x)|\le C_0(\varphi(x))^{\gamma_0}$ for any $t>0$ and $x\in\Rd$, where $C_0$ is the constant
in \eqref{giroitalia}. Since $\varphi^{\gamma_0}\in L^1(\Rd,|\mu_j|)$ for any $j=1,\ldots,m$
and $(\bm{\mathcal P}_t\f)_j$ converges to $\sum_{k=1}^m\int_{\Rd}f_kd\mu_k^j$ as $t$ tends to $+\infty$,
we can let $t$ tend to $+\infty$ in both sides of \eqref{programma-1} and conclude that
\begin{align*}
\sum_{j=1}^m\int_{\Rd}f_jd\mu_j=\sum_{k,j=1}^m\int_{\Rd}f_k\mu_j(\Rd)d\mu_k^j
= \sum_{k=1}^m\int_{\Rd}f_kd\lambda_k,
\end{align*}
where $d\lambda_k:=\sum_{j=1}^m\mu_j(\Rd)d\mu_k^j$ for any $k=1,\ldots,m$.
Fix $j\in \{1,\ldots,m\}$. Taking $\f=f\e_j$ in the above formula, reveals that
\begin{align*}
\int_{\Rd}f_jd\mu_j=\int_{\Rd}f_jd\lambda_j,
\end{align*}
which means that the measures $\lambda_j$ and $\mu_j$ coincide in $C_b(\Rd)$, for any $j=1,\ldots,m$.
Riesz theorem implies that $\lambda_j$ and $\mu_j$ coincide on the Borel sets of $\Rd$, for any $j=1,\ldots,m$.
Hence, formula \eqref{marione} holds true with $c_i=\mu_i(\Rd)$.
\end{proof}

\medskip

Proposition \ref{giampaolo-co} and the equivalence between $\mu$ and the Lebesgue measure yield immediately that $|\mu^i_j|$ is absolutely continuous with respect to $\mu$ for any $i,j=1,\ldots,m$. Next theorem provides a more refined result on the density of $|\mu^i_j|$ with respect to the measure $\mu$.

\begin{thm}\label{capitano}
For any $i,j=1,\ldots,m$, let $g_{ij}$ be the density of the measure $|\mu^i_j|$ with respect to the measure $\mu$. Then,
$g_{ij}\in L^{r_0}(\Rd,\mu)\cap W^{1,q}_{\rm loc}(\Rd)$ for any $q<+\infty$, where $r_0=\min\{\gamma,p_0'\}$, $\gamma$ is the constant appearing in Hypothesis $\ref{hyp_base}(v)$ and
$p_0'$ is the exponent conjugate to $p_0$.
\end{thm}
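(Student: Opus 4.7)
The plan is to establish the two assertions separately: the local Sobolev regularity will follow from recognizing $g_{ij}$ as the absolute value of a signed Lebesgue density divided by $\rho$ and invoking interior elliptic regularity for the formal adjoint system, while the global $L^{r_0}(\Rd,\mu)$ bound will be obtained by a duality argument that combines the ergodic representation \eqref{marito}, the integral formula \eqref{divano} and the pointwise gradient estimate \eqref{capitanfuturo} with $p=p_0$.

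For the local regularity, by Proposition~\ref{giampaolo-co} each $\mu^i_j$ has a signed Lebesgue density $\eta_{ij}$. Proposition~\ref{prop-benevento} and integration by parts against test functions in $C^{\infty}_c(\Rd;\Rm)$ show that the vector field $\bm\eta^i=(\eta_{i1},\ldots,\eta_{im})$ is a distributional solution of the formal adjoint system $\A^*\bm\eta^i=0$ in $\Rd$. This second-order system is uniformly elliptic on compact sets, with principal coefficients in $C^{1+\alpha}_{\rm loc}$ and lower-order coefficients in $C^{\alpha}_{\rm loc}$ (Hypothesis~\ref{hyp_base}(i)-(ii)), so classical interior Calder\'on--Zygmund estimates, bootstrapped from the initial $L^1_{\rm loc}$ information on $\bm\eta^i$, yield $\eta_{ij}\in W^{2,q}_{\rm loc}(\Rd)$ for every $q<+\infty$; the Stampacchia chain rule applied to the Lipschitz map $|\cdot|$ then gives $|\eta_{ij}|\in W^{1,q}_{\rm loc}$. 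Since the scalar density $\rho$ lies in $W^{1,q}_{\rm loc}(\Rd)$ and is strictly positive (hence locally bounded away from zero, by the scalar theory recalled in the Introduction), the quotient $g_{ij}=|\eta_{ij}|/\rho$ belongs to $W^{1,q}_{\rm loc}(\Rd)$ for every $q<+\infty$.

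For the global integrability, by duality it suffices to show that $|\int_{\Rd}\phi\,d|\mu^i_j||\le C(x_0)\|\phi\|_{L^{r_0'}(\Rd,\mu)}$ for every $\phi\in C_c(\Rd)$, where $r_0'$ is the H\"older conjugate of $r_0$ and $x_0$ is the minimum point of $\varphi$. Writing $d|\mu^i_j|=h_{ij}\,d\mu^i_j$ with $|h_{ij}|=1$ $\mu$-a.e. (Step~2 of the proof of Proposition~\ref{giampaolo-co}) and using \eqref{marito} (after approximating $h_{ij}$ by a sequence of continuous functions bounded in modulus by $1$, to legitimize the application of \eqref{capitanfuturo}), one obtains
\begin{align*}
\int_{\Rd}\phi\,d|\mu^i_j|=\lim_{t\to+\infty}\frac{1}{t}\int_0^t(\T(s)((\phi h_{ij})\e_j))_i(x_0)\,ds.
\end{align*}
Applying \eqref{divano} to the $i$-th component, bounding $|w_i(\tau,\cdot)|\le c\,\psi\,|J_x\T(\tau)((\phi h_{ij})\e_j)|$ and invoking \eqref{capitanfuturo} with $p=p_0$ give
\begin{align*}
|(\T(s)((\phi h_{ij})\e_j))_i(x_0)|\le T(s)|\phi|(x_0)+c\int_0^s(\tau^{-\frac{1}{2}}\vee 1)e^{\sigma_{p_0}\tau}\bigl(T(s-\tau)\bigl[\psi\,(T(\tau)|\phi|^{p_0})^{\frac{1}{p_0}}\bigr]\bigr)(x_0)\,d\tau.
\end{align*}
A H\"older inequality against the probability kernel $p(s-\tau,x_0,\cdot)$ with exponents $(r_0,r_0')$ bounds the inner factor by $[T(s-\tau)\psi^{r_0}(x_0)]^{1/r_0}\cdot\bigl[T(s-\tau)[(T(\tau)|\phi|^{p_0})^{r_0'/p_0}](x_0)\bigr]^{1/r_0'}$; the first factor is uniformly dominated by $c(\varphi(x_0))^{1/\gamma}$ because $\psi^{r_0}\le c\,\varphi^{r_0/\gamma}$ (Hypothesis~\ref{hyp_base}(v) with $r_0\le\gamma$), Jensen's inequality for the concave function $t\mapsto t^{r_0/\gamma}$, and \eqref{addio-00}; the second factor is dominated by $(T(s)|\phi|^{r_0'})^{1/r_0'}(x_0)$ after applying Jensen's inequality for the convex function $t\mapsto t^{r_0'/p_0}$ (legitimate because $r_0'/p_0\ge 1$) and the semigroup law $T(s-\tau)T(\tau)=T(s)$. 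Since $\sigma_{p_0}<0$ the integral $\int_0^\infty(\tau^{-1/2}\vee 1)e^{\sigma_{p_0}\tau}d\tau$ is finite, and applying Jensen's inequality one more time to pull the exponent $1/r_0'$ outside the time average yields
\begin{align*}
\frac{1}{t}\int_0^t|(\T(s)((\phi h_{ij})\e_j))_i(x_0)|\,ds\le C(x_0)\left(\frac{1}{t}\int_0^tT(s)|\phi|^{r_0'}(x_0)\,ds\right)^{\frac{1}{r_0'}}.
\end{align*}
The scalar Markov ergodic theorem applied to $|\phi|^{r_0'}\in C_b(\Rd)$ gives $\frac{1}{t}\int_0^tT(s)|\phi|^{r_0'}(x_0)\,ds\to\|\phi\|_{L^{r_0'}(\mu)}^{r_0'}$ as $t\to+\infty$, which yields the claimed duality bound and hence $g_{ij}\in L^{r_0}(\Rd,\mu)$.

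The main technical obstacle is reconciling the pointwise ergodic representation at $x_0$ with an $L^{r_0'}(\mu)$-norm of $\phi$; the resolution is to channel $\phi$ through the scalar semigroup $T(t)$ and exploit the $T$-invariance of $\mu$, so that the Ces\`aro average at $x_0$ converts into a $\mu$-integral in the limit. The specific value $r_0=\min\{\gamma,p_0'\}$ encodes the two simultaneous compatibility conditions behind this mechanism: $r_0\le\gamma$ is needed to transfer $\psi^{r_0}$ onto a power of the Lyapunov function $\varphi$ via Hypothesis~\ref{hyp_base}(v), while $r_0\le p_0'$ (equivalently $r_0'/p_0\ge 1$) is the precise condition under which convex Jensen turns $(T(\tau)|\phi|^{p_0})^{r_0'/p_0}$ into $T(\tau)|\phi|^{r_0'}$ and allows the invariance of $\mu$ to close the estimate.
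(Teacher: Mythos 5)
Your global integrability argument is essentially sound and runs on the same engine as the paper's, just organized differently: the paper integrates the Duhamel identity \eqref{divano} directly against $\mu$, applies H\"older in $L^p(\Rd,\mu)$ with the pair $(p_0',p_0)$ or $(\gamma,\gamma')$ according to whether $\gamma>p_0'$ or not, and then lets $t\to+\infty$ using Theorem \ref{venditti}; you keep everything pointwise at $x_0$, run H\"older and Jensen against the transition kernels $p(\cdot,x_0,dy)$, and close with the scalar ergodic theorem. The two compatibility conditions you isolate ($r_0\le\gamma$ to absorb $\psi^{r_0}$ into a power of $\varphi$, and $r_0'\ge p_0$ to make convex Jensen and the invariance of $\mu$ cooperate) are exactly the ones encoded in the paper's case distinction, so this half is a legitimate alternative, modulo the approximation of $h_{ij}$ by continuous functions that you correctly flag.

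The local regularity argument, however, has a genuine gap. The vector of Lebesgue densities $\bm{\eta}^i$ solves the adjoint system only in the \emph{double divergence} sense, i.e. $\sum_j\int_{\Rd}(\A\f)_j\eta_{ij}\,dx=0$ for test functions $\f$; written out, the leading term is $\sum_{h,k}D_{hk}(q_{hk}\eta_{ij})$, not $\sum_{h,k}q_{hk}D_{hk}\eta_{ij}$. Classical interior Calder\'on--Zygmund estimates apply to strong solutions of nondivergence-form equations and cannot be ``bootstrapped from $L^1_{\rm loc}$ information'': to rewrite the equation in nondivergence form one would have to differentiate $q_{hk}$ twice, while Hypothesis \ref{hyp_base}(i) only provides $C^{1+\alpha}_{\rm loc}$ regularity, and even for smooth coefficients there is no mechanism promoting an $L^1_{\rm loc}$ distributional solution of a double-divergence equation to $W^{2,q}_{\rm loc}$. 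The regularity actually available for such densities is $W^{1,p}_{\rm loc}$ plus local H\"older continuity (this is the content of \cite{bogkroroc-00}, which the paper invokes for $\rho$), and it is obtained the way the paper proceeds: one derives the dual estimate $|\int_{\Rd}{\mathcal A}\zeta\,d\mu^i_j|\le c_r\|\nabla\zeta\|_{L^{q'}(B(0,r))}$ for test functions $\zeta$, applies the duality characterization of local Sobolev spaces (Theorem D.1.4(ii) of \cite{newbook}), and bootstraps the integrability exponent of the density through the Sobolev embedding. Note also that in the paper the two halves of the proof are not independent: the bootstrap is seeded by the $L^{2}(B(0,r),\mu)$ bound on the densities coming from the first part (recall $r_0\ge 2$), whereas your local argument starts from nothing better than $L^1_{\rm loc}$. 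The conclusion $g_{ij}\in W^{1,q}_{\rm loc}(\Rd)$ is true, but your route to it does not work as written.
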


\begin{proof}
To begin with, let us prove that each function $g_{ij}$ belongs to $L^{r_0}(\Rd,\mu)$. For this purpose, we fix $i,j\in\{1,\ldots,m\}$,
$\f\in C_b(\Rd;\Rm)$ and recall that, up to a subsequence,
$\T(\cdot)\f$ is the pointwise limit of the sequence $(\uu_n)$, where $\uu_n$ is implicitly defined by the equation
\begin{align}
u_{n,i}(t,x)=(T(t)f_i)(x)+\int_0^t(T(t-s)w_{n,i}(s,\cdot))(x)ds
\label{cibalgina}
\end{align}
for any $t>0$ and $x\in\Rd$, and $w_{n,i}=\sum_{j=1}^d\sum_{i,h=1}^m(B_j)_{h,m}D_ju_{n,h}$ (see the proof of Theorem \ref{parodontax}). In particular, the sequence $(\uu_n)$ is bounded in each strip $[0,T]\times\Rd$.

Integrating both sides of \eqref{cibalgina} in $\Rd$ with respect to $\mu$ we get
\begin{align}
\int_{\Rd}u_{n,i}(t,\cdot)d\mu=&\int_{\Rd}T(t)f_id\mu+\int_{\Rd}d\mu\int_0^tT(t-s)w_{n,i}(s,\cdot)ds\notag\\
=&\int_{\Rd}f_id\mu+\int_0^tds\int_{\Rd}T(t-s)w_{n,i}(s,\cdot)d\mu\notag\\
=&\int_{\Rd}f_id\mu+\int_0^tds\int_{\Rd}w_{n,i}(s,\cdot)d\mu.
\label{simone-1}
\end{align}
Here, we have used the continuity of the function $(s,x)\mapsto (T(t-s)w_{n,i}(s,\cdot))(x)$, together with the estimate $|(T(t-s)w_{n,i}(s,\cdot))(x)|\le c_ns^{-1/2}\|f\|_{\infty}$ for any $s\in (0,t)$, $x\in\Rd$, to change the order of integration, and the invariance property of the measure $\mu$.

Now, we distinguish the cases $\gamma>p_0'$ and $\gamma\le p_0'$. In the first case, we use Hypothesis \ref{hyp_base}(ii), estimate \eqref{capitanfuturo} and the invariance property of $\mu$ to deduce that
\begin{align}
\int_{\Rd}|w_{n,i}(s,\cdot)|d\mu\le & c\int_{\Rd}\psi |J_x\T(s)\f|d\mu\notag\\
\le &c\|\psi\|_{L^{p_0'}(\Rd,\mu)}\| |J_x\T(s)\f| \|_{L^{p_0}(\Rd,\mu)}\notag\\
\le & ce^{\sigma_{p_0}s}\|\varphi\|_{L^1(\Rd,\mu)}^{\frac{p_0-1}{p_0}}(1\vee s^{-\frac{1}{2}})\|T(s)|\f|^{p_0}\|_{L^1(\Rd,\mu)}^{\frac{1}{p_0}}\notag\\
=& ce^{\sigma_{p_0}s}(1\vee s^{-\frac{1}{2}})\||\f|\|_{L^{p_0}(\Rd,\mu)}
\label{simone-2}
\end{align}
for any $s>0$. On the other hand, if $\gamma\le p_0'$, arguing similarly, we estimate
\begin{align}
\int_{\Rd}|w_{n,i}(s,\cdot)|d\mu\le &c\|\psi\|_{L^{\gamma}(\Rd,\mu)}\| |J_x\T(s)\f| \|_{L^{\frac{\gamma}{\gamma-1}}(\Rd,\mu)}\notag\\
\le & ce^{\sigma_{p_0}s}(1\vee s^{-\frac{1}{2}})\||\f|\|_{L^{\frac{\gamma}{\gamma-1}}(\Rd,\mu)}
\label{simone-2a}
\end{align}
for any $s>0$. From \eqref{simone-1}-\eqref{simone-2a} we can infer that
\begin{align}
\int_{\Rd}u_{n,i}(t,\cdot)d\mu
\le & \int_{\Rd}f_id\mu+c\||\f|\|_{L^{r_0'}(\Rd,\mu)}
\int_0^{+\infty}e^{\sigma_{p_0}s}(1\vee s^{-\frac{1}{2}})ds.
\label{simone-3}
\end{align}
Letting $n$ tend to $+\infty$ in \eqref{simone-3} we conclude that
\begin{align}
\int_{\Rd}(\T(t)\f)_i d\mu
\le & \int_{\Rd}f_id\mu+c\| |\f| \|_{L^{r_0'}(\Rd,\mu)}.
\label{simone-4}
\end{align}
Now, we let $t$ tend to $+\infty$ in \eqref{simone-4}. Taking \eqref{giroitalia} and the forthcoming Theorem \ref{venditti} into account, we can apply the dominated convergence theorem and deduce that
\begin{align}
\sum_{j=1}^m\int_{\Rd}f_jd\mu^i_j=
\int_{\Rd}\bigg (\sum_{j=1}^m\int_{\Rd}f_jd\mu^i_j\bigg )d\mu
\le \int_{\Rd}f_id\mu+c\| |\f| \|_{L^{r_0'}(\Rd,\mu)}.
\label{simone-5}
\end{align}

To go further, we extend \eqref{simone-5} to any $\f\in B_b(\Rd;\Rm)$ by approximating any such function $\f$ by a bounded sequence $(\f_n)\subset C_b(\Rd;\Rm)$ which converges to $\f$ almost everywhere (with respect to the Lebesgue measure and, hence, with respect to the measures $\mu^i_j$ and $\mu$) in $\Rd$. Writing
\eqref{simone-5} with $\f$ being replaced by the function $\f_n$ and letting $n$ tend to $+\infty$, by dominated convergence we obtain that $\f$ satisfies
\eqref{simone-5} as well.

Now, we are almost done. Indeed, take $f\in B_b(\Rd)$ and let $A_{ij}^+$ be the set where the positive part of $\mu^i_j$ is concentrated.
Writing \eqref{simone-5} with $\f= f\chi_{A_{ij}^+}\e_k$ gives
\begin{align*}
\int_{\Rd}fd(\mu^i_j)^{+}
\le &\delta_{ij}\int_{A_{ij}^+}fd\mu+c\|f\chi_{A_{ij}^+}\|_{L^{r_0'}(\Rd,\mu)}\notag\\
\le & \int_{\Rd}|f| d\mu+c\|f\|_{L^{r_0'}(\Rd,\mu)}\le (1+c)\|f\|_{L^{r_0'}(\Rd,\mu)}.
\end{align*}

This shows that the operator $f\mapsto \int_{\Rd}fd(\mu^i_j)^{+}$ can be (uniquely) extended to a bounded linear operator on $L^{r_0'}(\Rd,\mu)$ and the Riesz's representation theorem implies that there exists a nonnegative function $\phi^+_{ij}\in L^{r_0}(\Rd,\mu)$ such that
\begin{eqnarray*}
\int_{\Rd}fd(\mu^i_j)^{+}=\int_{\Rd}f\phi_{ij}^+d\mu,\qquad\;\,f\in L^{r_0'}(\Rd,\mu).
\end{eqnarray*}

Repeating the same arguments with $A_{ij}^+$ being replaced by the set $A_{ij}^-$, where the negative part of $\mu^i_j$
is concentrated, we can show that
\begin{eqnarray*}
\int_{\Rd}fd(\mu^i_j)^{-}=\int_{\Rd}f\phi^-_{ij}d\mu,\qquad\;\,f\in L^{r_0'}(\Rd,\mu)
\end{eqnarray*}
for some nonnegative function $\phi^-_{ij}\in L^{r_0}(\Rd,\mu)$. Since $g_{ij}=\phi^+_{ij}+\phi_{ij}^-$, we immediately conclude that $g_{ij}\in L^{r_0}(\Rd,\mu)$.

To conclude the proof, let us show that the function $g_{ij}$ belongs to $W^{1,q}_{\rm loc}(\Rd)$ for any $i,j=1,\ldots,m$ and any $q<+\infty$.
For this purpose, we use a bootstrap argument. We fix $r>0$, $\eta\in C_c^\infty(B(0,r))$ and $i,j\in\{1,\ldots,m\}$. Choosing $\f=\eta\e_j$ in \eqref{trasportino}
and observing that $d\mu_j^i=(\phi^+_{ij}-\phi_{ij}^-)d\mu:=h_{ij}d\mu$ for any $i,j=1,\ldots,m$, we get
\begin{align*}
\int_{\Rd}h_{ij}\rho \mathcal A \eta  dx
=-\sum_{h=1}^d\sum_{k=1}^m\int_{\Rd}(B_h)_{kj}D_h\eta (\phi^+_{ik}-\phi_{ik}^-)\rho dx.
\end{align*}
Let us estimate the right-hand side of the previous formula, which we denote by ${\mathcal I}_{ij}$. From the first part of the proof, we know that $\phi^{\pm}_{hk}\in L^2(\Rd,\mu)$ for any $h,k=1,\ldots,m$. Moreover, \cite[Corollary 2.9]{bogkroroc-00} implies that $\rho\in W^{1,p}_{\rm loc}(\Rd)$ for any $p \ge 1$ and so, in particular, $\rho$ is locally H\"older continuous. Since the entries of the matrices $B_h$ ($h=1,\ldots,d$) are locally bounded, we get
\begin{align*}
|{\mathcal I}_{ij}|
\leq c\max_{\substack{h=1,\ldots,d\\ k=1,\ldots,m}}&\|(B_h)_{kj}\sqrt{\rho}\|_{L^\infty(B(0,r))}\\
&\times \|h_{ik}\|_{L^{2}(B(0,r),\mu)}\|\nabla \eta\|_{L^2(B(0,r))}
\end{align*}
and, therefore,
\begin{align}
\label{pitagora}
\bigg |\int_{\Rd}\mathcal A \eta d\mu^j_i\bigg |\leq c_r\|\nabla \eta\|_{L^2(B(0,r))}.
\end{align}

Now, we fix $f\in C_c^\infty(\Rd)$ and a smooth function $\psi_r$ such that $\chi_{B(0,r/2)}\le \psi_r\le \chi_{B(0,r)}$. Since
$\mathcal A(\zeta_1\zeta_2)=\zeta_1\mathcal A \zeta_2+ \zeta_2 \mathcal A \zeta_1+2\langle Q\nabla \zeta_1,\nabla \zeta_2\rangle$ for any pair of smooth functions $\zeta_1,\zeta_2$, we can estimate
\begin{align*}
\bigg |\int_{\Rd}(\mathcal A f) \psi_r h_{ik}\rho dx\bigg |=&\bigg |\int_{\Rd}(\mathcal A f) \psi_r d\mu^i_j\bigg |\\
\leq & \bigg |\int_{\Rd}\mathcal A(\psi_rf)d\mu^i_j\bigg |+\bigg |\int_{\Rd}\mathcal A\psi_rfd\mu^i_j\bigg |
+2\bigg |\int_{\Rd}\langle Q\nabla f,\nabla \psi_r\rangle d\mu^i_j\bigg |\\
=&\!: {\mathcal J}_1+{\mathcal J}_2+{\mathcal J}_3.
\end{align*}
We claim that
\begin{align}
\label{pisolino}
{\mathcal J}_1+{\mathcal J}_2+{\mathcal J}_3\leq c_r\|f\|_{W^{1,2}(\Rd)}.
\end{align}
Estimate \eqref{pitagora} shows that ${\mathcal J}_1\leq c_r\|\nabla(\psi_R f)\|_{L^2(B(0,r))}\leq c_r\|f\|_{W^{1,2}(\Rd)}$. As far as ${\mathcal J}_2$ and ${\mathcal J}_3$ are concerned, arguing as above we deduce that
\begin{align*}
&{\mathcal J}_2\leq \|(\mathcal A\psi_r)\sqrt{\rho}\|_{L^\infty(B(0,r))}\|h_{ij}\|_{L^{2}(B(0,r),\mu)}\|f\|_{L^2(\Rd)},\\[1mm]
&{\mathcal J}_3\le 2\||Q\nabla\psi_r|\sqrt{\rho}\|_{L^\infty(B(0,r))}\|h_{ij}\|_{L^{2}(B(0,r),\mu)}\|\nabla f\|_{L^2(\Rd)}.
\end{align*}
Estimate \eqref{pisolino} is so proved and, from \cite[Theorem D.1.4(ii)]{newbook}, we deduce that $h_{ij}\rho\psi_r\in W^{1,2}_{\rm loc}(\Rd)$ for any $i,j=1,\ldots, m$. Thus, $h_{ij}\rho$ belongs to $W^{1,2}(B(0,r/2))$. The arbitrariness of $r>0$ yields immediately that $h_{ij}\rho\in W^{1,2}_{\rm loc}(\Rd)$. Since $\rho \in W^{1,p}_{\rm loc}(\Rd)$ for any $p\ge 1$ and $\inf_{\Rd}\rho$ is positive, we can infer that $h_{ij}$ belongs to $W^{1,2}_{\rm loc}(\Rd)$.

Now, we can make the bootstrap argument work. By the Sobolev embedding theorem, $h_{ij}$ belongs to $L^{2^*}_{\rm loc}(\Rd)$ (and, hence, to $L^{2^*}_{\rm loc}(\Rd,\mu)$), where $1/2^*=1/2-1/d$. Thus, arguing as above, replacing $\|h_{ij}\|_{L^2(B(0,r),\mu)}$ by $\|h_{ij}\|_{L^{2^*}(B(0,r),\mu)}$ and $\|f\|_{W^{1,2}(\Rd)}$ by $\|f\|_{W^{1,(2^*)'}(\Rd)}$ in the estimate \eqref{pisolino} and applying again \cite[Theorem D.1.4(ii)]{newbook} we can infer that $g_{ij}$ belongs to $W^{1,2^*}_{\rm loc}(\Rd)$. Iterating this procedure, in a finite number of steps we get that $h_{ij}\in W^{1,p}_{\rm loc}(\Rd)$ for some $p>d$. We are almost done. Indeed, again by the Sobolev embedding theorem we deduce that $g_{ij}\in L^q_{\rm loc}(\Rd)$ for any $q\le +\infty$. Hence, we can write estimate \eqref{pisolino} with $\|f\|_{W^{1,2}(\Rd)}$ being replaced by $\|f\|_{W^{1,q'}(\Rd)}$, for any $q'<+\infty$, and \cite[Theorem D.1.4(ii)]{newbook} allows us to conclude that $h_{ij}$ belongs to $W^{1,q}_{\rm loc}(\Rd)$ for any $q<+\infty$.

Finally, we observe that $\phi^+_{ij}$ coincides with the positive part of $h_{ij}$ (and, hence, $\phi^{-}_{ij}$ coincides with the negative part of $h_{ij}$) as it is immediately checked recalling that
$\phi^{+}_{ij}$ and $\phi^{-}_{ij}$ are nonnegative functions with disjoint supports. Since the positive and negative parts of a function in $W^{1,p}_{\rm loc}(\Rd)$ belong to $W^{1,p}_{\rm loc}(\Rd)$, we immediately conclude that
$g_{ij}=\phi^+_{ij}+\phi^{-}_{ij}$ belongs to $W^{1,p}_{\rm loc}(\Rd)$ as well.
\end{proof}

\medskip

To conclude this subsection, we consider the particular case where the measure $\mu$ is symmetrizing for the scalar semigroup $T(t)$, i.e.,
\begin{equation}\label{premium}
\int_{\Rd}\mathcal{A}f \,gd\mu=-\int_{\Rd}\langle Q \nabla f,\nabla g\rangle d\mu.
\end{equation}
for any $f\in H^{2}_{\rm loc}(\Rd)$, $g\in H^1_{\rm loc}(\Rd)$ such that $f$ or $g$ has compact support.

\begin{rmk}
\label{rem-4.1}
{\rm Sufficient conditions for \eqref{premium} to hold are provided in \cite{lorenzi-lunardi-elliptic} under the following additional assumptions on the coefficients
$q_{ij}$ and $b_j$ ($i,j=1,\ldots,d$): there exists a function $\Phi:\Rd\to\R$ such that
\begin{enumerate}[\rm(i)]
\item $Q^{-1}(\textrm{div}Q-{\bf b})=\nabla\Phi$ where $(\textrm{div}Q)_j:= \sum_{i=1}^d D_i q_{ij}$ for any $j=1, \ldots, d$;
\item $e^{-\Phi}\in L^1(\Rd)$;
\item there exists two positive constants $k_1$ and $k_2\in(0,1)$ such that
\begin{align}
&\langle Q(x)(J{\bf b}(x))^*\xi,\xi\rangle
+\langle Q(x)\xi,\nabla{\rm Tr}(Q(x)S)\rangle
-{\rm Tr}((\nabla (Q(x)\xi))Q(x)S) \notag \\
\leq &k_1|\sqrt{Q}(x)\xi|^2+k_2|\sqrt{Q}(x)S\sqrt{Q}(x)|^2
\label{kebab}
\end{align}
for any $x,\xi\in\Rd$ and any $d\times d$ symmetric matrix $S$.
\end{enumerate}
}
\end{rmk}

Let $\{\mu_j: j=1,\ldots,m\}$ be a system of invariant measures for $\T(t)$ which consist of measures absolutely continuous with respect to the Lebesgue measure.
Since $\mu$ is equivalent to the Lebesgue measure, there exist a vector valued function $\boldsymbol\rho$ such that each $\rho_i$ belongs to $L^1(\Rd,\mu)$ and $d\mu_i=\rho_i d\mu$. For $\boldsymbol\rho$ smooth enough, next theorem relates the invariance property of the family $\{\mu_j: j=1,\ldots,m\}$ to a first-order differential
equation that $\boldsymbol\rho$ has to satisfy.

\begin{thm}
Under Hypothesis $\ref{hyp_base}$, assume that $|Q(x)|\le c(1+|x|^2)$ for any $x \in \Rd$, that the map $x\mapsto |Q(x)|$ belongs to $L^1(\Rd,\mu)$ and that $\mu$ is symmetrizing for the scalar semigroup $T(t)$.
Further, let $\{\mu_i:i=1,\ldots,m\}$ be a family of Borel finite measures, absolutely continuous with respect to the Lebesgue measure. Suppose that
$\boldsymbol\rho$ solves the first-order differential equations $(Q\nabla\rho_i)_j-(B_j^*\boldsymbol\rho)_i=0$ in $\Rd$ for any $i,j=1,\ldots, m$, with $\rho_i\in L^2(\Rd,\mu)\cap W^{1,p}_{\rm loc}(\Rd)$ for some $p<+\infty$ and any $i$ as above. Then, $\{\mu_i:i=1,\ldots, m\}$ is a system of invariant measures for $\T(t)$.
\end{thm}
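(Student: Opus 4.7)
The plan is to verify the criterion of Proposition~\ref{prop-benevento}: it suffices to show that
\[
I(\f):=\sum_{i=1}^m\int_{\Rd}(\A\f)_i\,\rho_i\,d\mu=0
\qquad\text{for every } \f\in D_{\rm max}(\A).
\]
The integral is well defined since $\A\f\in C_b(\Rd;\Rm)$ and each $\rho_i\in L^1(\Rd,\mu)$ (as $\mu_i$ is finite). Decomposing $(\A\f)_i=\mathcal A f_i+\sum_{k,j}(B_k)_{ij}D_k f_j$ and relabelling the indices $i\leftrightarrow j$ in the coupling sum recasts the coupling contribution to $I(\f)$ as $\sum_{i,k}\int_{\Rd}(B_k^{*}\boldsymbol\rho)_i D_k f_i\,d\mu$. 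The hoped-for cancellation, namely
\[
\sum_i\int_{\Rd}\mathcal A f_i\,\rho_i\,d\mu=-\sum_{i,k}\int_{\Rd}D_k f_i\,(Q\nabla\rho_i)_k\,d\mu,
\]
would then follow by formally applying the symmetry identity \eqref{premium} to $(f_i,\rho_i)$ and invoking the pointwise equation $(Q\nabla\rho_i)_k=(B_k^{*}\boldsymbol\rho)_i$ assumed in the hypotheses.

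The technical step is to justify this integration by parts. Formula \eqref{premium} requires compact support of at least one factor, while $f_i\in C_b(\Rd)\cap W^{2,p}_{\rm loc}(\Rd)$ is merely bounded and $\rho_i\in L^2(\Rd,\mu)\cap W^{1,p}_{\rm loc}(\Rd)$ only has local regularity. I would fix cut-offs $\eta_n\in C_c^\infty(\Rd)$ with $\chi_{B(0,n)}\le\eta_n\le\chi_{B(0,2n)}$, $\|\nabla\eta_n\|_\infty\le c/n$ and $\|D^2\eta_n\|_\infty\le c/n^2$, apply \eqref{premium} to the pair $(f_i,\eta_n^2\rho_i)$, substitute the PDE for $\boldsymbol\rho$ into the resulting $\langle Q\nabla f_i,\nabla\rho_i\rangle$ piece, and sum over $i$. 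A direct manipulation of the product rule produces the identity
\[
\int_{\Rd}\eta_n^2\sum_i\rho_i(\A\f)_i\,d\mu=-2\sum_i\int_{\Rd}\eta_n\,\rho_i\,\langle Q\nabla f_i,\nabla\eta_n\rangle\,d\mu,
\]
whose left-hand side tends to $I(\f)$ as $n\to+\infty$ by dominated convergence, since $(\A\f)_i\rho_i\in L^1(\Rd,\mu)$.

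The main obstacle is controlling the commutator on the right-hand side: on the annulus $B(0,2n)\setminus B(0,n)$ the growth $|Q|\le c(1+|x|^2)$ only yields $|Q\nabla\eta_n|=O(n)$, and $\nabla f_i$ is not globally controlled. I would remove $\nabla f_i$ by a further integration by parts shifting the derivative off $f_i$ onto the compactly supported vector field $\eta_n\rho_i\rho\,Q\nabla\eta_n$ (where $\rho$ denotes the density of $\mu$ with respect to the Lebesgue measure), producing terms that multiply the bounded factor $f_i$ by combinations of $\langle Q\nabla\eta_n,\nabla\eta_n\rangle$, $\eta_n\langle Q\nabla\eta_n,\nabla\rho_i\rangle$, $\eta_n\rho_i\rho^{-1}\langle Q\nabla\eta_n,\nabla\rho\rangle$ and $\eta_n\rho_i\,\mathrm{div}(Q\nabla\eta_n)$. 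Each is then controlled by the available hypotheses: the product $|Q||\nabla\eta_n|^2\le c(1+|x|^2)/n^2$ is $O(1)$ uniformly on the annulus, so its pairing with $f_i\rho_i\in L^1(\Rd,\mu)$ vanishes by dominated convergence as the $\mu$-measure of the annulus tends to zero; the PDE yields $|Q\nabla\rho_i|\le c\psi|\boldsymbol\rho|\le c\sqrt{|Q|}|\boldsymbol\rho|\in L^1(\Rd,\mu)$ by Cauchy--Schwarz using $|Q|\in L^1(\Rd,\mu)$ and $\boldsymbol\rho\in L^2(\Rd,\mu)^m$; and $\mathrm{div}(Q\nabla\eta_n)$ is handled through the same growth bounds together with the local regularity of the coefficients of $\mathcal A$. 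Dominated convergence then gives $I(\f)=0$ and the theorem follows.
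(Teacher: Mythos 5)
Your overall strategy (verify the criterion of Proposition \ref{prop-benevento}, cut off, apply \eqref{premium} to $(f_i,\eta_n^2\rho_i)$, substitute the equation $(Q\nabla\rho_i)_j=(B_j^*\boldsymbol\rho)_i$, and pass to the limit) matches the paper up to the point you yourself identify as the main obstacle: the commutator term $\int\eta_n\rho_i\langle Q\nabla f_i,\nabla\eta_n\rangle\,d\mu$ for a general $\f\in D_{\rm max}(\A)$, whose gradient is not globally controlled. Your proposed fix -- a second integration by parts moving the derivative off $f_i$ onto $\eta_n\rho_i\rho\,Q\nabla\eta_n$ -- does not close this gap. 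Among the terms it produces are $f_i\eta_n\rho_i\langle Q\nabla\eta_n,\nabla\rho\rangle\rho^{-1}$ and $f_i\eta_n\rho_i\,\mathrm{div}(Q\nabla\eta_n)$ (integrated against $\mu$); since $\langle Q\nabla\eta_n,\nabla\rho\rangle\rho^{-1}=\langle\nabla\eta_n,{\bf b}-\mathrm{div}\,Q\rangle$ under the symmetrizing hypothesis, these bring in the drift ${\bf b}$ and $\mathrm{div}\,Q$, for which the hypotheses provide \emph{no} global growth bound (in Example 2.2 the drift grows like $|x|^{2r+1}$ with $r$ arbitrarily large, so $|{\bf b}||\nabla\eta_n|$ is not $O(1)$ on the annulus and ${\bf b}\rho_i$ need not be $\mu$-integrable). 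The assertion that these terms are ``handled through the same growth bounds together with the local regularity of the coefficients'' is therefore unjustified, and the argument fails for general $\f\in D_{\rm max}(\A)$.

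The missing idea is a reduction step that the paper carries out first: one shows that
\begin{equation*}
D_{\mu}(\A)=\{\f\in D_{\rm max}(\A):\,|\sqrt{Q}\nabla f_i|\in L^2(\Rd,\mu),\ i=1,\ldots,m\}
\end{equation*}
is a core for $(\A,D_{\rm max}(\A))$ in the mixed topology. This uses the resolvent operators $R(n)$ of \eqref{Res}: the pointwise gradient estimate of Proposition \ref{bundes} gives $|\sqrt{Q}\nabla(R(n)\f)_i|^2\le c|Q|\,\|\f\|_\infty$, and the hypothesis $|Q|\in L^1(\Rd,\mu)$ (which your proof never uses in an essential way) then shows $R(n)\f\in D_\mu(\A)$; the convergence $nR(n)\g\to\g$ in the mixed topology, via Proposition \ref{carovana}, completes the core argument. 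Once $\f\in D_\mu(\A)$, the commutator term is disposed of directly by Cauchy--Schwarz,
\begin{equation*}
\int_{\Rd}|\langle\sqrt{Q}\nabla\vartheta_n,\sqrt{Q}\nabla f_i\rangle\rho_i|\,d\mu
\le c\,\|\sqrt{Q}\nabla f_i\|_{L^2(\Rd,\mu)}\Big(\int_{\Rd\setminus B(0,n)}\rho_i^2\,d\mu\Big)^{\frac12},
\end{equation*}
using $|\sqrt{Q}|\,|\nabla\vartheta_n|\le c$ on the support of $\nabla\vartheta_n$ and $\rho_i\in L^2(\Rd,\mu)$; no second integration by parts, and hence no appearance of ${\bf b}$ or $\mathrm{div}\,Q$, is needed. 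You would need to add this core argument (or an equivalent device giving global $L^2(\mu)$-control of $\sqrt{Q}\nabla f_i$) for your proof to go through.
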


\begin{proof}
We split the proof into two steps.

{\em Step 1.} Here, we prove that the set
\begin{eqnarray*}
D_{\mu}(\A):=\{\f\in D_{\rm max}(\A):\, |\sqrt{Q}\nabla f_i|\in L^2(\Rd,\mu),\,\,i=1,\ldots, m\}
\end{eqnarray*}
is a core for $(\A,D_{\max}(\A))$ with respect to the mixed topology\footnote{i.e.,
for any $\f\in D_{\max}(\A)$ there exists a sequence $(\f_n)\subset D_{\mu}(\A)$ such that $\sup_{n\in\N}(\|\f_n\|_{\infty}+\|\A\f_n\|_{\infty}<+\infty)$,
$\f_n$ and $\A\f_n$ converge to $\f$ and $\A\f$, respectively, locally uniformly in $\Rd$ as $n$ tends to $+\infty$} of $C_b(\Rd;\Rm)$.
To begin with, we prove that $R(n)$ (see \eqref{Res}) preserves $D_{\mu}(\A)$ for any $n\in\N$. For this purpose, we observe that, in view of Proposition \ref{bundes}, we can estimate
\begin{align*}
|\sqrt{Q}\nabla(R(n)\f)_i|^2=&\left\langle Q\int_0^{+\infty}e^{-nt}\nabla(\T(t)\f)_i(\cdot)dt,\int_0^{+\infty}e^{-nt}\nabla(\T(t)\f)_i(\cdot)dt\right\rangle\\
\le & |Q|\bigg |\int_0^{+\infty}e^{-nt}\nabla(\T(t)\f)_i(\cdot)dt\bigg |^2\\
\le &c|Q|\|\f\|_{\infty}\bigg |\int_0^{+\infty}e^{-nt}(1\vee t^{-\frac{1}{2}})dt\bigg |^2\le c|Q|\|\f\|_{\infty}.
\end{align*}
Since $|Q|\in L^1(\Rd,\mu)$ we deduce that $R(n)\f\in D_{\mu}(\A)$ for any $\f\in C_b(\Rd;\Rm)$.

Now, for any $\g\in C_b(\Rd;\Rm)$ and $n\in\N$ such that\footnote{Here, $[\beta]$ denotes the integer part of $\beta$.} $n>[\beta]$ (see \eqref{rettore}), we consider the function
$nR(n)\g$. Note that $\|R(n)\g\|_{C_b(\Rd;\Rm)}\le (n-\beta)^{-1}$ for any $n$ as above. Moreover, $nR(n)\g$ converges to $\g$ locally uniformly in $\R$ as $n$ tends to $+\infty$.
 This is clear if $\g$ belongs to $D_{\max}(\A)$. Indeed, we can split $nR(n)\g=\g-R(n)\A\g$ for any $n$ and, by the above estimate, $R(n)\A\g$ vanishes uniformly in $\Rd$ as $n$ tends to $+\infty$. Suppose that $\g\in C_b(\Rd;\Rm)$. Since $C^{\infty}_c(\Rd;\Rm)\subset D_{\max}(\A)$, we can determine a sequence $(\g_m)\subset D_{\max}(\A)$, bounded with respect to the sup-norm, which converges to $\g$ locally uniformly in $\Rd$. We split
\begin{align*}
\|nR(n)\g-\g\|_{C_b(B(0,r);\Rm)}\le &\|nR(n)(\g-\g_m)\|_{C_b(B(0,r);\Rm)}+\|\g_m-\g\|_{C_b(B(0,r);\Rm)}\\
&+\|nR(n)\g_m-\g_m\|_{C_b(B(0,r);\Rm)}\\
\le & \|\T(\cdot)(\g-\g_m)\|_{C_b((0,+\infty)\times B(0,r);\Rm)}\\
&+\|\g_m-\g\|_{C_b(B(0,r);\Rm)}\\
&+\|nR(n)\g_m-\g_m\|_{C_b(B(0,r);\Rm)}
\end{align*}
for any $m\in\N$, $n>[\beta]$ and $r>0$. Taking Proposition \ref{carovana} into account, we can let first $m$ and then $n$ tend to $+\infty$ in the first and last side of the previous chain of inequalities and conclude that $nR(n)\g$ converges to $\g$ locally uniformly in $\Rd$.

Given $\f\in D_{\max}(\A)$, the sequence $(\f_n)$ we are looking for can be defined by setting $\f_n=nR(n)\f$ for any $n>[\beta]$.

{\em Step 2.} In view of Proposition \ref{prop-benevento}, to prove that the system $\{\mu_i: i=1,\ldots,m\}$ is invariant for $\T(t)$ we need to show that
$\sum_{i=1}^m\int_{\Rd}(\A\f)_id\mu_i=0$ for any $\f\in D_{\max}(\A)$. By Step 1, we can limit ourselves to proving that the previous formula holds true for any $\f\in D_{\mu}(\A)$.
So, let us fix one such function $\f$ and let $(\vartheta_n)\in C_c^\infty(\Rd;\Rí)$ be a sequence of cut-off functions such that $\chi_{B(0,n)}\le\vartheta_n\leq \chi_{B(0,n+1)}$ and $|\nabla\vartheta_n|\leq cn^{-1}$ for any $n\in \N$. We set $\f_n:=\vartheta_n\f$ and using \eqref{premier} and (i) we obtain that
\begin{align}
\label{chico-1}
\sum_{i=1}^m\int_{\Rd}(\A\f_n)_id\mu_i
= & \sum_{i=1}^m\bigg [\int_{\Rd}({\mathcal A}f_{n,i})\rho_id\mu
+\sum_{j=1}^d\int_{\Rd}(B_j D_j \f_n)_i\rho_id\mu\bigg ] \notag \\
= & \sum_{i=1}^m\bigg [-\int_{\Rd}\langle Q\nabla f_{n,i}, \nabla\rho_i\rangle d\mu+\sum_{j=1}^d\int_{\Rd}(B_j D_j \f_n)_i\rho_id\mu\bigg ]\notag \\
= & \sum_{i=1}^m\sum_{j=1}^d\int_{\Rd}D_jf_{n,i}
[(B_j^*\boldsymbol\rho)_i-(Q\nabla\rho_i)_j]d\mu=0
\end{align}
for $n \in \N$. Now we show that the first side of \eqref{chico-1}
converges to $\sum_{i=1}^m\int_{\Rd}(\A\f)_id\mu_i$ as $n$ tends to $+\infty$. For this purpose, we observe that for any $n\in\N$ it holds that
\begin{align*}
(\A \f_n)_i=\vartheta_n (\A\f)_i+f_i\mathcal A\vartheta_n+\sum_{k=1}^m\sum_{j=1}^df_k(B_j)_{ik}D_j\vartheta_n+2\langle Q \nabla \vartheta_n,\nabla f_i\rangle.
\end{align*}
Integrating this formula over $\Rd$ with respect to $\mu_i$, summing up over $i$ from $1$ to $m$, using again \eqref{premium}, to write $\int_{\Rd}(\mathcal A\vartheta_n)f_i\rho_id\mu=-\int_{\Rd}\langle Q\nabla\vartheta_n,\nabla (f_i\rho_i)\rangle d\mu$, and the assumption on $\boldsymbol\rho$,  we get
\begin{align}\label{minest}
\sum_{i=1}^m\int_{\Rd}(\A \f_n)_id\mu_i=&
\sum_{i=1}^m\int_{\Rd}\vartheta_n (\A\f)_id\mu_i
+\sum_{i=1}^m\int_{\Rd}\langle Q\nabla\vartheta_n,\nabla f_i\rangle \rho_id\mu\notag\\
&+\int_{\Rd}\sum_{k=1}^m\sum_{j=1}^df_k(B_j)_{ik}D_j\vartheta_n d\mu_i
-\sum_{k=1}^m\int_{\Rd}f_k\langle Q\nabla\vartheta_n,\nabla\rho_k\rangle d\mu\notag\\
=&\sum_{i=1}^m\int_{\Rd}\vartheta_n (\A\f)_id\mu_i
+\sum_{k=1}^m\int_{\Rd}\langle \sqrt{Q}\nabla\vartheta_n,\sqrt{Q}\nabla f_k\rangle \rho_kd\mu.
\end{align}
By the dominated convergence theorem, the first term in the last side of \eqref{minest} converges to $\sum_{i=1}^m\int_{\Rd} (\A \f)_id\mu_i$ as $n$ tends to $+\infty$. In addition
\begin{align*}
\int_{\Rd}|\langle \sqrt{Q}\nabla\vartheta_n,\sqrt{Q}\nabla f_i\rangle \rho_i|d\mu
\leq &c \int_{\Rd\setminus B(0,n)}\frac{|\sqrt{Q}|}{n}|\sqrt{Q}\nabla f_i||\rho_i|d\mu\\
\le & c\|\sqrt{Q}\nabla f_i\|_{L^2(\Rd;\mu)}\int_{\Rd\setminus B(0,n)}\rho_i^2d\mu
\end{align*}
which vanishes as $n$ tends to $+\infty$, since $\rho_i\in L^2(\Rd,\mu)$.
As a byproduct, we conclude that $\sum_{i=1}^m\int_{\Rd}(\A \f_n)_id\mu_i$ converges to $\sum_{i=1}^m\int_{\Rd}(\A \f)_id\mu_i$ as $n$ tends to $+\infty$.
We have so proved that $\sum_{i=1}^m\int_{\Rd}(\A \f_n)_id\mu_i=0$. This completes the proof.
\end{proof}

\begin{example}
{\rm Here we assume $d=1$ and $m=2$. In this case $(\A\zeta)(x)=q(x)\boldsymbol{\zeta}''(x)+b(x)\boldsymbol{\zeta}'(x)+B\boldsymbol{\zeta}'(x)$ for any $x\in\R$, on smooth functions
$\boldsymbol{\zeta}:\R\to\R^2$. We suppose that $q,b$ and the entries of the matrix-valued function $B$ satisfy Hypotheses \ref{hyp_base} and the function
$x\mapsto-\log(q(x))+\int_0^x(q(s))^{-1}b(s)ds$ belongs to $L^1(\R)$. In this case, Remark \ref{rem-4.1} is satisfied and
\begin{eqnarray*}
\mu(dx)= \frac{c}{q(x)}\exp\Big(\int_0^x \frac{b(s)}{q(s)}ds\Big)dx
\end{eqnarray*}
for a suitable positive constant $c$. Note that condition (iii) in Remark \ref{rem-4.1} reduces to $q(x)b'(x)\xi^2\le k_1q(x)\xi^2+k_2(q(x))^2s^2$ for any $s,\xi,x\in\R$ and
some constants $k_1>0$ and $k_2\in (0,1)$, which is trivially satisfied since $b'<0$ in $\R$ due to Hypothesis \ref{hyp_base}(iv).

In order to compute a system of invariant measures associated to the vector-valued semigroup associated to $\A$ we further assume that $|q(x)|\le c(1+x^2)$ for any $x\in \R$, and
$B_{11}(x)+B_{12}(x)=B_{21}(x)+B_{22}(x)=:\beta(x)$ for any $x\in\R$. From Hypothesis \ref{hyp_base}(ii) it follows that the functions $B_{ij}$ ($i,j=1,2$) grow
at most linearly as $|x|$ tends to $+\infty$. We solve the system $q\boldsymbol\rho'=B^*\boldsymbol\rho$. Due to the above condition on the sum of the rows of $B$, we easily see that
$(\rho_1+\rho_2)'=q^{-1}(B_{11}+B_{12})(\rho_1+\rho_2)$. Hence,
\begin{eqnarray*}
\rho_1(x)+\rho_2(x)=c_1\exp\bigg (\int_0^x\frac{\beta(t)}{q(t)}dt\bigg ),\qquad\;\,x\in\R,
\end{eqnarray*}
for some positive constant $c_1$. Using this equation to write $\rho_2$ in terms of $\rho_1$ and replacing in the first equation of the above system, we easily see that
\begin{align*}
\rho_1(x) &=\exp\left(\int_0^x\frac{\gamma(t)}{q(t)}dt\right)\Bigg[c_2+c_1\displaystyle\int_0^x\exp\bigg (\int_0^t\frac{\beta(s)-\gamma(s)}{q(s)}ds\bigg )\frac{B_{21}(t)}{q(t)}dt\Bigg],
\end{align*}
and
\begin{align*}
\rho_2(x)=&\exp\bigg (\int_0^x\frac{\gamma(t)}{q(t)}dt\bigg )\Bigg[-c_2+c_1\exp\left (\int_0^x\frac{\beta(t)-\gamma(t)}{q(t)}dt\right )\\
&\phantom{\exp\bigg (\int_0^x\frac{\gamma(t)}{q(t)}dt\bigg )\Bigg[\;}-c_1\int_0^x\exp\bigg (\int_0^t\frac{\beta(s)-\gamma(s)}{q(s)}ds\bigg )\frac{B_{21}(t)}{q(t)}dt\Bigg]\\
=&\exp\bigg (\int_0^x\frac{\gamma(t)}{q(t)}dt\bigg )\Bigg[-c_2+c_1\exp\left (\int_0^x\frac{\beta(t)-\gamma(t)}{q(t)}dt\right )\\
&\phantom{\exp\bigg (\int_0^x\frac{\gamma(t)}{q(t)}dt\bigg )\Bigg[\;}-c_1\int_0^x\exp\bigg (\int_0^t\frac{\beta(s)-\gamma(s)}{q(s)}ds\bigg )\frac{\beta(t)-\gamma(t)}{q(t)}dt\\
&\phantom{\exp\bigg (\int_0^x\frac{\gamma(t)}{q(t)}dt\bigg )\Bigg[\;}+c_1\int_0^x\exp\bigg (\int_0^t\frac{\beta(s)-\gamma(s)}{q(s)}ds\bigg )\frac{B_{12}(t)}{q(t)}dt\Bigg]\\
=&\exp\bigg (\int_0^x\frac{\gamma(t)}{q(t)}dt\bigg )\Bigg[-c_2+c_1+c_1\int_0^x\exp\bigg (\int_0^t\frac{\beta(s)-\gamma(s)}{q(s)}ds\bigg )\frac{B_{12}(t)}{q(t)}dt\bigg ]
\end{align*}
for some positive constant $c_2\in \R$, where $\gamma=B_{11}-B_{21}$.

Now, we consider two concrete cases.

{\emph{Case 1}}. Here, we assume $q(x)=1$, $b(x)=-x$ for any $x\in\R$ and
\begin{eqnarray*}
B=\left(\begin{matrix}
0 \quad & -1\\
-1 \quad &0
\end{matrix}\right).
\end{eqnarray*}
This means that the scalar operator is the Ornstein-Uhlenbeck operator and the invariant measure of the associated Ornstein-Uhlenbeck operator $T(t)$ is the Gaussian measure $\mu(dx)=(2\pi)^{-1/2}e^{-x^2/2}dx$.
Hence, condition (ii) in Remark \ref{rem-4.1} is clearly satisfied.

From the above formulas for $\rho_1$ and $\rho_2$, we get
\begin{align*}
\rho_1(x)=a_1e^x+a_2e^{-x}, \qquad
\rho_2(x)=-a_1e^x+a_2e^{-x},\quad\;\,x\in\R,
\end{align*}
for any $a_1,a_2\in\R$.

{\emph{Case 2}}.
If $q(x)=1+x^2$, $b(x)=-b_0x(1+x^2)$ and $B_{ij}(x)=b_{ij}x$ for any $x\in\R$ and some positive constants $b_0$ and $b_{ij}$ ($i,j=1,2$)
such that
\begin{eqnarray*}
\bigg (\sum_{i,j=1}^2b_{ij}^2\bigg )^{\frac{1}{2}}+\left (\max_{1\le i,j\le 2}|b_{ij}|+1\right )^2<b_0,
\end{eqnarray*}
then Hypotheses \ref{hyp_base} are clearly satisfied. In particular, for any $h\in\N$, the function $\varphi$, defined by $\varphi(x)=(1+x^2)^h$ satisfies Hypothesis \ref{hyp_base}(iii). Moreover it is quite easy to show that the density of the invariant measure $\mu$ associated to the scalar semigroup $T(t)$ is the function
$x\mapsto (\pi e^{b/2}{\rm erfc}(2^{-1/2}b^{1/2}))(1+x^2)^{-1}\exp(-bx^2/2)dx$ for any $x\in\R$. Again, this implies that condition (ii) in Remark \ref{rem-4.1} holds true. It turns out that
\begin{align}
\left\{
\begin{array}{ll}
\displaystyle \rho_1(x)=(1+x^2)^{\frac{b_{11}-b_{21}}{2}}\bigg (c_2+\frac{c_1}{2} b_{21}\log(1+x^2)\bigg ),\\[3mm]
\displaystyle \rho_2(x)=(1+x^2)^{\frac{b_{11}-b_{21}}{2}}\bigg (c_1-c_2+\frac{c_1}{2}b_{12}\log(1+x^2)\bigg ) ,
\end{array}
\right.
\label{cineca}
\end{align}
for any $x\in\R$ and $c_1,c_2\in\R$, if $b_{12}=-b_{21}$, and
\begin{align*}
\left\{
\begin{array}{ll}
\displaystyle \rho_1(x)=(1+x^2)^{\frac{b_{11}-b_{21}}{2}}\bigg (c_2+c_1\frac{b_{21}}{b_{12}+b_{21}}(1+x^2)^{\frac{b_{12}+b_{21}}{2}}-c_1\frac{b_{21}}{b_{12}+b_{21}}\bigg ), \\[3mm]
\displaystyle
\rho_2(x)=(1+x^2)^{\frac{b_{11}-b_{21}}{2}}\bigg (-c_2+c_1\frac{b_{12}}{b_{12}+b_{21}}(1+x^2)^{\frac{b_{12}+b_{21}}{2}}+c_1\frac{b_{21}}{b_{12}+b_{21}}\bigg ), \\
\end{array}
\right.
\end{align*}
for any $x\in\R$ and $c_1,c_2\in\R$, otherwise.
Note that in both cases, the functions $\rho_1$ and $\rho_2$ belongs to $H^1_{\rm loc}(\R)\cap L^q(\R,\mu)$ for any $q<+\infty$.}
\end{example}

\begin{rmk}
{\rm We stress that, if $\rho_1$ and $\rho_2$ are given by \eqref{cineca} and $B$ is not diagonal, then, for any choice of the constants $c_1$ and $c_2$, at least one between $\mu_1=\rho_1 d\mu$ and $\mu_2=\rho_2 d\mu$ is not a positive measure. Indeed, suppose to fix the ideas that $b_{12}<0$. Then, $\rho_2$ is positive in a neighborhood of $+\infty$ if and only if $c_1\le 0$. Since $\rho_2(0)=c_1-c_2$, also $c_2$ should be non positive and
$c_1$, $c_2$ can not be both zero. If $c_2<0$, then $\rho_1(0)<0$, otherwise, if $c_2=0$, then $c_1<0$ and $\rho_2(0)=c_1<0$.}
\end{rmk}

\subsection{Asymptotic behaviour of the semigroup in $C_b(\Rd;\Rm)$}

As in the scalar case, the systems of invariant measures $\{\mu^i_j:j=1, \ldots, m\}$ ($i=1, \ldots, m$) provided by Theorem \ref{succhinopera} allow to characterize the asymptotic behaviour of $\T(t)$ in $C_b(\Rd;\Rm)$ as $t$ tends to $+\infty$.
\begin{thm}
\label{venditti}
For any $\f\in C_b(\Rd;\Rm)$ and $i=1, \ldots, m$, it holds that
\begin{eqnarray*}
\lim_{t \to +\infty}(\T(t)\f)_i=\sum_{j=1}^m\int_{\Rd}f_jd\mu^i_j,
\end{eqnarray*}
locally uniformly in $\Rd$.
\end{thm}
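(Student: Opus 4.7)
The plan is to combine three ingredients already established: the exponential Jacobian decay from Proposition \ref{bundes}, the continuity of $\T(t)$ in its initial datum from Proposition \ref{carovana}, and the Ces\`aro limit computed in Theorem \ref{succhinopera}. Organizing the argument in this way avoids any Tauberian-type input.

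First I would extract a convergent subsequence. By Theorem \ref{parodontax}(i) the family $\{\T(t)\f:t>0\}$ is bounded in $C_{1/\gamma}(\Rd;\Rm)$, and the interior Schauder estimates in Theorem \ref{thm-A2} upgrade this to a uniform bound in $C^{2+\alpha}(\overline{B(0,R)};\Rm)$ for every $t\ge 1$ and $R>0$. Arzel\`a--Ascoli and a diagonal argument then produce a sequence $t_n\to+\infty$ along which $\T(t_n)\f$ converges in $C^1_{\rm loc}(\Rd;\Rm)$ to some $\g\in C_{1/\gamma}(\Rd;\Rm)$. Next I would check that $\g$ is constant: Proposition \ref{bundes} with $p=p_0$, combined with $T(t)|\f|^{p_0}\le\|\f\|_\infty^{p_0}$, yields
$$|J_x\T(t)\f(x)|^{p_0}\le C_{p_0}e^{p_0\sigma_{p_0}t}(1\vee t^{-p_0/2})\|\f\|_\infty^{p_0},\qquad t>0,\ x\in\Rd,$$
which vanishes uniformly on $\Rd$ as $t\to+\infty$, since $\sigma_{p_0}<0$ by Hypothesis \ref{hyp_base}(iv). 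Passing to the limit along $(t_n)$ in $C^1_{\rm loc}$ gives $J_x\g\equiv 0$, so $\g$ is a constant vector.

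The crucial step is then to upgrade subsequential to full convergence. Since $(\T(t_n)\f)$ is bounded in $C_{1/\gamma}(\Rd;\Rm)$ and converges locally uniformly to $\g$, Proposition \ref{carovana} applied with $\T(t_n)\f$ as initial datum gives $\T(s)\T(t_n)\f\to\T(s)\g$ uniformly in $(s,x)\in(0,+\infty)\times B(0,R)$, for every $R>0$. Because $\g$ is constant one has $\A\g\equiv 0$ and the uniqueness part of Theorem \ref{point_prop} forces $\T(s)\g=\g$ for every $s\ge 0$. Hence $\T(t_n+s)\f\to\g$ uniformly in $s>0$ and $x\in B(0,R)$. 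Given $\varepsilon>0$, choose $n$ so that this uniform tail-sup is below $\varepsilon$: then every $t>t_n$ has the form $t=t_n+s$ with $s>0$, so $|\T(t)\f-\g|<\varepsilon$ on $B(0,R)$. This proves $\T(t)\f\to\g$ locally uniformly on $\Rd$ as $t\to+\infty$.

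Finally, local uniform convergence of $\T(t)\f$ to $\g$ transfers to the Ces\`aro mean $t^{-1}\int_0^t\T(s)\f\,ds$, which by Theorem \ref{succhinopera} converges locally uniformly to $\sum_{i=1}^m\bigl(\sum_{j=1}^m\int_{\Rd}f_jd\mu^i_j\bigr)\e_i$. Thus $\g$ coincides with this vector, and reading off the $i$-th component gives the stated limit. The main obstacle is precisely the passage from Ces\`aro to pointwise convergence, which is not automatic; what makes it go through without a Tauberian theorem is that the exponential gradient decay confines every subsequential limit to the constants, the constants are $\T(t)$-invariant, and Proposition \ref{carovana} then forward-propagates a single subsequential limit to the whole orbit.
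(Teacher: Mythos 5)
Your proposal is correct, but it follows a genuinely different route from the paper. The paper's proof is a direct quantitative estimate: using the invariance identity together with $\mu^i_j(\Rd)=\delta_{ij}$, it rewrites $(\T(t)\f)_i(x)-\sum_j\int_{\Rd}f_j\,d\mu^i_j$ as $\sum_j\int_{\Rd}\bigl((\T(t)\f)_j(x)-(\T(t)\f)_j(y)\bigr)d\mu^i_j(y)$, then splits the integral over the expanding ball $B(0,e^{-\sigma_2 t/2})$ and its complement, controlling the first piece by the gradient decay of Proposition \ref{bundes} (so that the increment is $O(e^{\sigma_2 t}|x-y|)$) and the second by the growth bound \eqref{giroitalia} together with the integrability $\varphi^{\gamma_0}\in L^1(\Rd,|\mu^i_j|)$ from Proposition \ref{giampaolo-co}. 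You instead run a soft compactness--rigidity--propagation argument: Schauder plus Arzel\`a--Ascoli give a subsequential limit, the uniform vanishing of $|J_x\T(t)\f|$ forces that limit to be a constant (which is a fixed point of $\T(t)$), Proposition \ref{carovana} upgrades convergence along $t_n$ to convergence of the whole orbit $\T(t_n+s)\f$ uniformly in $s$, and the Ces\`aro limit of Theorem \ref{succhinopera} identifies the constant. Each step you use is available and non-circular (Theorem \ref{capitano} cites Theorem \ref{venditti}, not the other way around). What the paper's argument buys is an implicit rate of convergence and independence from the ergodic-average machinery once the measures are constructed; what yours buys is that it bypasses Proposition \ref{giampaolo-co} entirely, needing no information about the measures beyond their definition as Ces\`aro limits, at the price of being purely qualitative. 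Both hinge on the same key analytic input, namely the exponential decay of the Jacobian.
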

\begin{proof}
Fix $t>0$, $x \in \Rd$ and $\f\in C_b(\Rd;\Rm)$. Using the invariance property \eqref{caricatore} and taking into account that $\mu^i_j(\Rd)=\delta_{ij}$  for any $i,j=1,\ldots,m$ (see Subsection \ref{cmp}), we can write
\begin{align*}
\Bigg|(\T(t)\f)_i(x)-\sum_{j=1}^m\int_{\Rd} f_j(y)d\mu^i_j(y)\Bigg|
= \left|\sum_{j=1}^m\int_{\Rd}\big((\T(t)\f)_j(x)-(\T(t)\f)_j(y)\big)d\mu^i_j(y)\right|
\end{align*}
for any $i=1, \ldots,m$. Now, for any $t \ge 1$ set $B^t:=B(0,e^{-\sigma_2 t/2})$. Thus, using estimate \eqref{capitanfuturo}, recalling that $\gamma^{-1}\le\gamma_0$, $\varphi\ge 1$ in $\Rd$ and taking Proposition \ref{giampaolo-co} into account, we deduce that
\begin{align*}
&\bigg|\int_{\Rd}\big((\T(t)\f)_{j}(x) -(\T(t)\f)_j(y)\big)d\mu^i_j(y)\bigg| \\
\le &\int_{\Rd\setminus B^t}\left|\big((\T(t)\f)_{j}(x)-(\T(t)\f)_j(y)\big)\right|d\mu^i_j(y)\\
&+\int_{B^t}\left|\big((\T(t)\f)_{j}(x)-(\T(t)\f)_j(y)\big)\right|d\mu^i_j(y) \\
\le &c\|\f\|_{\infty}\int_{\Rd\setminus B^t}\varphi^{\frac{1}{\gamma}}d|\mu^i_j|
+ce^{\sigma_2 t}\int_{B^t}|x-y|d|\mu^i_j|(y) \\
\le &c\|\f\|_{\infty}\int_{\Rd\setminus B^t}\varphi^{\gamma_0}d|\mu^i_j|+c\left (e^{\sigma_2 t}|x|+e^{\frac{1}{2}\sigma_2t}\right )
\end{align*}
for any $i,j=1, \ldots, m$. Letting $t$ tend to $+\infty$ we obtain that the right-hand side tends to $0$
locally uniformly with respect to $x\in\Rd$ and this yields the claim.
\end{proof}

\appendix
\section{A priori estimates}
\label{sect-app}
\begin{thm}
Let $\uu$ belong to $W^{2,p}_{\rm loc}(\Rd;\R^m)$ for some $1<p<+\infty$. Then, for any pair of open bounded sets $\Omega_1$ and $\Omega_2$, $\Omega_1$ being compactly contained in $\Omega_2$, there exists a positive constant $c$, depending on $d,p,\Omega_1$, $\Omega_2$ the ellipticity constant of the operator $\A$ and the H\"older norm of its coefficients
 over $\Omega_2$, but independent of $\uu$, such that
\begin{equation}\label{lp_int}
\|\uu\|_{W^{2,p}(\Omega_1;\R^m)}\le c(\|\uu\|_{L^p(\Omega_2;\R^m)}+\|\A\uu\|_{L^p(\Omega_2;\R^m)}).
\end{equation}
\end{thm}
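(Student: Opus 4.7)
The plan is to reduce the system to the scalar theory by exploiting that the second-order part of $\A$ is diagonal in the matrix sense: the coefficients $q_{hk}$ do not depend on the component index. Writing $\mathcal L_0:=\sum_{h,k=1}^d q_{hk}D_{hk}+\sum_{k=1}^d b_k D_k$ for the scalar principal part, each component of $\uu$ satisfies
\[
\mathcal L_0 u_j=(\A\uu)_j-\sum_{k=1}^d\sum_{i=1}^m(B_k)_{ji}D_k u_i,\qquad j=1,\ldots,m,
\]
so that the coupling between components is confined to a first-order remainder. By Hypothesis~\ref{hyp_base}(i)--(ii), $\mathcal L_0$ is uniformly elliptic on $\Omega_2$ with coefficients in $C^{\alpha}(\overline{\Omega_2})$, and hence the classical scalar interior $W^{2,p}$-estimate (e.g.\ Theorem~9.11 of Gilbarg--Trudinger) applies to each $u_j$.

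Fixing an intermediate open set $\Omega'$ with $\Omega_1\subset\subset\Omega'\subset\subset\Omega_2$ and applying the scalar estimate to each $u_j$ on the pair $(\Omega_1,\Omega')$, after summing over $j$ one obtains
\[
\|\uu\|_{W^{2,p}(\Omega_1;\R^m)}\le c\bigl(\|\uu\|_{L^p(\Omega';\R^m)}+\|\A\uu\|_{L^p(\Omega';\R^m)}+\|\nabla\uu\|_{L^p(\Omega';\R^m)}\bigr),
\]
with a constant depending on $d,p,\Omega_1,\Omega'$, the ellipticity constant $\lambda_0$, the $C^{\alpha}(\overline{\Omega'})$-norms of the coefficients of $\A$, and $\|B_k\|_{L^{\infty}(\Omega')}$.

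The main obstacle is to remove the gradient term on the right-hand side. A direct Nirenberg interpolation on $\Omega'$ produces $\|D^2\uu\|_{L^p(\Omega')}$, which lives on the \emph{larger} set $\Omega'$ and therefore cannot be absorbed into $\|\uu\|_{W^{2,p}(\Omega_1;\R^m)}$ on the left. The standard way out, which I would follow, is a nested-domain iteration: introduce a monotone one-parameter family $\{U_t\}_{t\in[0,1]}$ of open sets with $U_0=\Omega_1$, $U_1=\Omega_2$ and $\mathrm{dist}(\partial U_t,\partial U_s)\ge c_0|t-s|$ for $t<s$, apply the previous inequality on every pair $U_t\subset\subset U_s$ (with constants bounded by a negative power of $s-t$), and couple it with the Gagliardo--Nirenberg interpolation
\[
\|\nabla\uu\|_{L^p(U_s;\R^m)}\le \varepsilon\,\|D^2\uu\|_{L^p(U_s;\R^m)}+C_\varepsilon\,\|\uu\|_{L^p(U_s;\R^m)}.
\]
Choosing $\varepsilon$ sufficiently small and iterating along the family (a classical device used, for instance, in the proof of Theorem~9.11 of Gilbarg--Trudinger) absorbs the $W^{2,p}$-norm on $U_s$ into the one on $U_t$ and delivers the desired estimate \eqref{lp_int}.
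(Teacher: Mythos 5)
Your proposal is correct and follows essentially the same route as the paper: both reduce the diagonal second-order system to scalar Calder\'on--Zygmund theory and absorb the first-order coupling term by interpolating the gradient on a nested family of domains. The paper implements the absorption with cutoff functions on dyadically shrinking balls and a telescoping sum (followed by a covering argument for general $\Omega_1\subset\subset\Omega_2$), while you invoke the continuous-family iteration lemma; these are interchangeable versions of the same standard device.
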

\begin{proof}
We divide the proof into two steps. In the first step we prove the claim when $\Omega_1= B(0,r)$ and $\Omega_2=B(0,2r)$, $r>0$.
In the second one, we complete the proof.

{\em Step 1.} For any $n\in \N$, we set $r_n=(2-2^{-n})r$. Clearly, $r_0=r$ and $r_\infty=2r$. We also set
\begin{eqnarray*}
\vartheta_n(x)=\vartheta\left(1+\frac{|x|-r_n}{r_{n+1}-r_n}\right),\qquad\;\,x\in\Rd,\;\,n\in\N,
\end{eqnarray*}
where $\vartheta\in C^\infty(\R)$ satisfies
$\chi_{(-\infty,1]}\leq\vartheta\leq\chi_{(-\infty,2]}$. Clearly, each function $\vartheta_n$ belongs to $C^\infty_c(\Rd)$, is such that $0\le \vartheta_n\le 1$, $\vartheta_n=1$ in $B(0,r_n)$ and $\supp(\vartheta_n)\subset B(0,r_{n+1})$. Moreover $\|\vartheta_n\|_{C_b^h(\Rd)}\le 2^{hn}c_r$ for $h=1,2$.

Applying classical global $L^p$-estimates to the functions $\vv_n:=\vartheta_n\uu$, which belong to $W^{2,p}(\Rd;\R^m)$
as well as the interpolative estimate $\| |J_x\vv_{n+1}| \|_{L^p(\Rd;\Rm)}\le c(\varepsilon\|D^2\vv_{n+1}\|_{L^p(\Rd;\Rm)}+\varepsilon^{-1}\|\vv_{n+1}\|_{L^p(\Rd;\Rm)})$, which holds true for any $\varepsilon>0$, we deduce that
\begin{align*}
\|\vv_n\|_{W^{2,p}(\Rd;\Rm)}&\le c_r(\|\vv_n\|_{L^p(\Rd;\Rm)}+\|\A\vv_n\|_{L^p(\Rd;\Rm)})\\
& \le c_r(\|\A\uu\|_{L^p(B(0,2r);\Rm)}+4^n\|\uu\|_{L^p(B(0,2r);\Rm)}\\
&\phantom{\le c_r(\,}+2^n\| |J_x\vv_{n+1}| \|_{L^p(\Rd;\Rm)})\\
&\le c_r(\|\A\uu\|_{L^p(B(0,2r);\Rm)}+(2^n\varepsilon^{-1}+4^n)\|\uu\|_{L^p(B(0,2r);\Rm)}\\
&\phantom{\le c_r(\;} +2^n\varepsilon\|\vv_{n+1}\|_{W^{2,p}(\Rd;\Rm)})
\end{align*}
for any $\varepsilon \in(0,1)$, where the constant $c$ depends also on $d$, $m$, $p$, the ellipticity constant of the operator $\A$ and the H\"older norm of its coefficients
 over $B(0,2r)$.
Choosing $\varepsilon=c^{-1}2^{-n-4}$ the previous inequality becomes
\begin{align*}
&\|\vv_n\|_{W^{2,p}(\Rd;\Rm)}-2^{-4}\|\vv_{n+1}|\|_{W^{2,p}(\Rd;\Rm)}\\
\le &c_r\|\A\uu\|_{L^p(B(0,2r);\Rm)}+4^{n+2}c_r\|\uu\|_{L^p(B(0,2r);\Rm)}.
\end{align*}
Multiplying both the terms by $2^{4n}$ and summing over $n$ from $0$ to $k\in \N$ we get
\begin{align*}
&\|\vv_0\|_{W^{2,p}(\Rd;\Rm)}\!-\!2^{-4k-4}\|\vv_{k+1}\|_{W^{2,p}(\Rd;\Rm)}\\
\le &c_r(\|\A\uu\|_{L^p(B(0,2r);\Rm)}+\|\uu\|_{L^p(B(0,2r);\Rm)}).
\end{align*}
Since $\|\vartheta_{k+1}\|_{C^2_b(\Rd)}\le 4^kc_r$, for any $k\in\N$, the second term in the left-hand side of the previous inequality vanishes as $k$ tends to $+\infty$ and this allows us to conclude the proof in this particular case, recalling that $\vv_0=\uu$ on $B(0,r)$.

{\em Step 2.}
Here, we complete the proof using a covering argument. Let $\Omega_1$ and $\Omega_2$ be as in the statement of the theorem. Further, fix $0<r<{\rm dist}(\Omega_1,\partial \Omega_2)$. By compactness we can cover $\Omega_1$ by a finite number of balls of radius $r$, i.e., there exist $x_1,\ldots, x_k$ in $\Omega_1$ such that $\overline{\Omega_1}\subset \bigcup_{i=1}^k(B(x_i,r))$. Due to the choice of $r$, $\bigcup_{i=1}^k(B(x_i,r))\subset\Omega_2$. By a translation, we can easily
extend estimate \eqref{lp_int} to balls centered at any point $x_0 \in \Rd$. Hence, we can write
\begin{align*}
\|\uu\|_{W^{2,p}(\Omega_1;\Rm)}&\le \sum_{i=1}^k\|\uu\|_{W^{2,p}(B(x_i,r);\R^m)}\\
&\le c\sum_{i=1}^k(\|\uu\|_{L^p(B(x_i,2r);\R^m)}+ \|\A\uu\|_{L^p(B(x_i,2r);\R^m)})\\
&\le c(\|\uu\|_{L^p(\Omega_2;\Rm)}+ \|\A\uu\|_{L^p(\Omega_2;\Rm)})
\end{align*}
and the claim is so proved.
\end{proof}

\begin{thm}[Theorem A.2, \cite{AALT}]
\label{thm-A2}
Let $\uu\in C^{1+\alpha/2,2+\alpha}_{\rm loc}((0,T]\times\Rd;\R^m)$
satisfy the differential equation
$D_t\uu = \A\uu+\g$ in $(0,T]\times\Rd$, for some $\g\in C^{\alpha/2,\alpha}_{\rm loc}((0,T]\times\Rd;\R^m)$ and $T>0$.
Then, for any $\tau\in(0,T)$ and any pair of bounded open sets $\Omega_1$ and $\Omega_2$, with $\Omega_1$ being compactly contained in $\Omega_2$, there exists a positive
constant $c$, independent of $\uu$, such that
\begin{align*}
&\|\uu\|_{C^{1+\alpha/2,2+\alpha}((\tau,T)\times\Omega_1;\R^m)}\notag\\
\le & c(\|\uu\|_{C_b((\tau/2,T)\times\Omega_2;\R^m)}+\|\g\|_{C^{\alpha/2,\alpha}((\tau/2,T)\times\Omega_2;\R^m)}).
\end{align*}
\end{thm}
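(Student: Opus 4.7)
The plan is to reduce the system estimate to the scalar parabolic interior Schauder estimate, exploiting that the coupling in $\A$ happens only at first order. Writing out each component,
\begin{equation*}
D_tu_j-\mathcal{A}u_j=\sum_{k=1}^d\sum_{i=1}^m(B_k)_{ji}D_ku_i+g_j=:F_j,\qquad j=1,\ldots,m,
\end{equation*}
so each $u_j$ solves a scalar parabolic equation whose drift is $\mathbf{b}$ and whose diffusion is $Q$, with the off-diagonal coupling pushed into a forcing $F_j$. Under Hypotheses \ref{hyp_base}(i)-(ii), the coefficients $q_{hk}$, $b_k$ and $(B_k)_{ji}$ are in $C^{1+\alpha}_{\rm loc}(\Rd)\subset C^{\alpha}_{\rm loc}(\Rd)$, so on any parabolic cylinder $(\sigma,T)\times\Omega$ with $\overline\Omega$ compact,
\begin{equation*}
\|F_j\|_{C^{\alpha/2,\alpha}((\sigma,T)\times\Omega)}\le c\bigl(\|\uu\|_{C^{\alpha/2,1+\alpha}((\sigma,T)\times\Omega;\Rm)}+\|\g\|_{C^{\alpha/2,\alpha}((\sigma,T)\times\Omega;\Rm)}\bigr).
\end{equation*}

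Next, I would invoke the classical interior parabolic Schauder estimate for the scalar operator $D_t-\mathcal A$ (see, e.g., the parabolic analogue of the estimate used for Theorem \ref{maremaremare}, as in Lunardi or Ladyzhenskaya--Solonnikov--Ural'tseva). For any pair of bounded open sets $\Omega'\subset\subset\Omega''$ and times $\tau/2<\sigma'<\sigma''<\tau$, it yields
\begin{equation*}
\|u_j\|_{C^{1+\alpha/2,2+\alpha}((\sigma',T)\times\Omega')}\le c\bigl(\|u_j\|_{C_b((\sigma'',T)\times\Omega'')}+\|F_j\|_{C^{\alpha/2,\alpha}((\sigma'',T)\times\Omega'')}\bigr),
\end{equation*}
with $c$ depending on $d,\alpha,\lambda_0$, the H\"older norms of the coefficients over $\Omega''$, and $\mathrm{dist}(\Omega',\partial\Omega'')$, $\sigma''-\sigma'$. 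Combining with the bound on $\|F_j\|$ gives
\begin{equation*}
\|\uu\|_{C^{1+\alpha/2,2+\alpha}((\sigma',T)\times\Omega';\Rm)}\le c\bigl(\|\uu\|_{C_b}+\|\uu\|_{C^{\alpha/2,1+\alpha}((\sigma'',T)\times\Omega'';\Rm)}+\|\g\|_{C^{\alpha/2,\alpha}}\bigr).
\end{equation*}

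The remaining obstacle is the presence of the intermediate norm $\|\uu\|_{C^{\alpha/2,1+\alpha}}$ on the right-hand side. I would absorb it by a telescoping argument entirely analogous to Step 1 of the proof of \eqref{lp_int} above: fix a finite chain
\begin{equation*}
\Omega_1=:\Omega^{(0)}\subset\subset\Omega^{(1)}\subset\subset\cdots\subset\subset\Omega^{(N)}\subset\subset\Omega_2,\qquad \tau=:\sigma_0>\sigma_1>\cdots>\sigma_N=\tau/2,
\end{equation*}
apply the displayed bound at each step with $(\Omega',\Omega'',\sigma',\sigma'')=(\Omega^{(n)},\Omega^{(n+1)},\sigma_n,\sigma_{n+1})$, and interpolate using the standard parabolic inequality
\begin{equation*}
\|\uu\|_{C^{\alpha/2,1+\alpha}(Q;\Rm)}\le\varepsilon\|\uu\|_{C^{1+\alpha/2,2+\alpha}(Q;\Rm)}+C_\varepsilon\|\uu\|_{C_b(Q;\Rm)},\qquad\varepsilon\in(0,1),
\end{equation*}
with $\varepsilon$ chosen geometrically small along the chain. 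Multiplying the $n$-th inequality by a sufficiently large geometric factor, summing, and absorbing yields
\begin{equation*}
\|\uu\|_{C^{1+\alpha/2,2+\alpha}((\tau,T)\times\Omega_1;\Rm)}\le c\bigl(\|\uu\|_{C_b((\tau/2,T)\times\Omega_2;\Rm)}+\|\g\|_{C^{\alpha/2,\alpha}((\tau/2,T)\times\Omega_2;\Rm)}\bigr),
\end{equation*}
which is the claim. The main technical nuisance is coordinating the absorption so that the intermediate norms over the enlarged cylinders are eaten by the higher-order norms multiplied by small factors; the telescoping device used in Step 1 of the $L^p$ proof above transfers essentially verbatim to the H\"older setting.
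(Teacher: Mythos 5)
The paper itself offers no proof of Theorem \ref{thm-A2}: it is imported verbatim from \cite{AALT} (Theorem A.2 there), so there is no internal argument to compare yours against. Your strategy is the natural direct one and is essentially the standard proof of interior Schauder estimates for systems coupled only in the lower-order terms: view each component as a solution of the scalar equation $D_tu_j-{\mathcal A}u_j=F_j$ with $F_j=\sum_{k,i}(B_k)_{ji}D_ku_i+g_j$, note that $F_j$ is locally $\alpha$-H\"older because the coefficients are in $C^{1+\alpha}_{\rm loc}$, apply the scalar interior Schauder estimate on nested cylinders, and absorb the intermediate norm $\|\uu\|_{C^{\alpha/2,1+\alpha}}$ by interpolation. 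Two points deserve care. First, as literally written your absorption does not close: with a \emph{finite} chain $\Omega^{(0)}\subset\subset\cdots\subset\subset\Omega^{(N)}$ and fixed small $\varepsilon$'s, iterating leaves a term of the form $\theta^N\|\uu\|_{C^{1+\alpha/2,2+\alpha}((\tau/2,T)\times\Omega^{(N)};\R^m)}$ on the right-hand side; this is finite (by the hypothesis $\uu\in C^{1+\alpha/2,2+\alpha}_{\rm loc}$) but it is not controlled by $\|\uu\|_{C_b}+\|\g\|_{C^{\alpha/2,\alpha}}$, so the resulting constant would depend on $\uu$, contradicting the statement. The fix is exactly the device of Step 1 of the $L^p$ proof that you invoke, but it requires the \emph{infinite} chain with geometrically converging radii (or, equivalently, a Giaquinta-type iteration lemma applied to $\phi(s)=\|\uu\|_{C^{1+\alpha/2,2+\alpha}}$ on cylinders of radius $s$): one must check that the growth of the Schauder constants and of $C_\varepsilon$ as the gaps between consecutive cylinders shrink is beaten by the geometric smallness of the absorption factors, so that the remainder term tends to zero, using the a priori local finiteness of the top-order norm. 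Second, your intermediate space must contain the $\alpha/2$-H\"older-in-time seminorm of $J_x\uu$ (this is what is actually needed to bound $\|F_j\|_{C^{\alpha/2,\alpha}}$), and the interpolation inequality you quote should be stated for that space; the standard parabolic interpolation inequalities do provide this, and the fact that the cylinders reach the terminal time $t=T$ is harmless since parabolic interior estimates hold up to the final time. With these adjustments your argument is complete and, if anything, more self-contained than the paper, which delegates the result entirely to \cite{AALT}.
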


\end{document}